\newtheorem{theorem}{Theorem}[section]
\newtheorem{lemma}[theorem]{Lemma}
\newtheorem{corollary}[theorem]{Corollary}
\newtheorem{problem}[theorem]{Problem}
\newtheorem*{claim*}{Claim}
\DeclareMathOperator{\R}{R} 
\DeclareMathOperator{\OR}{\overline{R}} 
\DeclareMathOperator{\minr}{\min\overline{R}} 
\DeclareMathOperator{\maxr}{\max\overline{R}} 
\newcommand{\cG}{\mathcal{G}}
\newcommand{\cH}{\mathcal{H}}
\newcommand{\cK}{\mathcal{K}}
\newcommand{\cM}{\mathcal{M}}
\newcommand{\cR}{\mathcal{R}}
\newcommand{\cP}{\mathcal{P}}
\newcommand{\cC}{\mathcal{C}}
\newcommand{\cT}{\mathcal{T}}
\newcommand{\cF}{\mathcal{F}}
\let\old@setaddresses\@setaddresses
\def\@setaddresses{\bgroup\parindent 0pt\let\scshape\relax\old@setaddresses\egroup}
\title[On ordered Ramsey numbers of bounded-degree graphs]{On ordered Ramsey numbers of \\bounded-degree graphs}
\author{Martin Balko}
\author{V\'{\i}t Jel\'{\i}nek}
\author{Pavel Valtr}
\address[Martin Balko, Pavel Valtr]{Department of Applied Mathematics, Faculty of Mathematics and Physics, Charles University, Prague, Czech Republic}
\email{balko@kam.mff.cuni.cz}
\address[V\'{\i}t Jel\'{\i}nek]{Computer Science Institute of Charles University, Faculty of Mathematics and Physics, Charles University, Prague, Czech Republic}
\email{jelinek@iuuk.mff.cuni.cz}
\begin{document}

\begin{abstract}
An \emph{ordered graph} is a pair $\mathcal{G}=(G,\prec)$ where $G$ is a graph and $\prec$ is a total ordering of its vertices.
The \emph{ordered Ramsey number} $\OR(\mathcal{G})$ is the minimum number $N$ such that every $2$-coloring of the edges of the ordered complete graph on $N$ vertices contains a monochromatic copy of $\mathcal{G}$. 

We show that for every integer $d \geq 3$, almost every $d$-regular graph $G$ satisfies $\OR(\cG) \geq \frac{n^{3/2-1/d}}{4\log{n}\log{\log{n}}}$ for every ordering $\cG$ of $G$.
In particular, there are 3-regular graphs $G$ on $n$ vertices for which the numbers $\OR(\cG)$ are superlinear in $n$, regardless of the ordering $\cG$ of $G$.
This solves a problem of Conlon, Fox, Lee, and Sudakov.

On the other hand, we prove that every graph $G$ on $n$ vertices with maximum degree 2 admits an ordering $\cG$ of $G$ such that $\OR(\cG)$ is linear in $n$.

We also show that almost every ordered matching $\cM$ with $n$ vertices and with interval chromatic number two satisfies $\OR(\cM) \geq cn^2/\log^2{n}$ for some absolute constant $c$.
\end{abstract}

\maketitle

\section{Introduction}
\label{sec:introduction}

Let $G$ be a \emph{graph} and let $V(G)$ and $E(G)$ be the set of vertices and edges of~$G$, respectively.
The \emph{Ramsey number} of a graph $G$, denoted by $\R(G)$, is the minimum integer $N$ such that every 2-coloring of the edges of the complete graph $K_N$ on $N$ vertices contains a monochromatic copy of $G$ as a subgraph.
By Ramsey's theorem~\cite{rams30}, the number $\R(G)$ is finite for every graph $G$.

An \emph{ordered graph} is a pair $\cG=(G,\prec)$ where $\prec$ is a total ordering of $V(G)$.
We call $\cG$ an \emph{ordering} of $G$.
The \emph{vertices} of $\cG$ are vertices of $G$ and, similarly, the $\emph{edges}$ of $\cG$ are edges of $G$.
We say that two ordered graphs $\cG=(G,\prec_1)$ and $\cH=(H,\prec_2)$ are \emph{isomorphic} if the graphs $G$ and $H$ are isomorphic via a one-to-one correspondence $g \colon V(G) \to V(H)$ that preserves the orderings.
That is, we have $u\prec_1 v \Leftrightarrow g(u)\prec_2 g(v)$ for all $u,v \in V(G)$.
Note that, for every positive integer $n$, there is only one ordered complete graph on $n$ vertices up to isomorphism.
We use $\cK_n$ to denote such ordered complete graph.
An ordered graph $\cG=(G,\prec_1)$ is an \emph{ordered subgraph} of an ordered graph $\cH=(H,\prec_2)$, written $\cG \subseteq\cH$, if $G$ is a subgraph of $H$ and $\prec_1$ is a suborder of $\prec_2$. 

Recently, an analogue of Ramsey numbers for general ordered graphs has been introduced~\cite{bckk13,bckk15,cfls14} (see also Section~3.7 in~\cite{cfs15}).
The \emph{ordered Ramsey number} $\OR(\cG)$ of $\cG$ is the minimum integer $N$ such that every $2$-coloring of the edges of~$\cK_N$ contains a monochromatic ordered subgraph that is isomorphic to $\cG$.
In this paper, we use the shorter term $\emph{coloring}$ of an ordered graph $\cG=(G,\prec)$ to denote a $2$-coloring of the edges of $G$. 

Obviously, we have $\R(G)\leq \OR(\cG)$ for every ordering $\cG$ of $G$ and $\R(K_n)=\OR(\cK_n)$ for every positive integer $n$.
Since every ordered graph $\cG$ on $n$ vertices is an ordered subgraph of $\cK_n$, we have $\OR(\cG)\leq \R(\cK_n)$.
It follows that the number $\OR(\cG)$ is finite for every ordered graph $\cG$.
A classical result of Erd\H{o}s~\cite{erdos47} gives $\R(K_n) \leq 2^{2n}$.
Thus the number $\OR(\cG)$ is at most exponential in $n$ for every ordered graph $\cG$ on $n$ vertices.

In the following, we omit the ceiling and floor signs whenever they are not crucial.
All logarithms in this paper are base 2.

\section{Previous results}
\label{subsec:previousResults}

For a graph $G$ and its ordering $\cG$, there might be a substantial difference between $\R(G)$ and $\OR(\cG)$.
For a graph $G$, we define
$$\minr(G):=\min\{ \OR(\cG): \cG\ \text{is an ordering of}\ G\},$$
$$\maxr(G):=\max\{ \OR(\cG): \cG\ \text{is an ordering of}\ G\}.$$

By the definitions, for every graph $G$ on $n$ vertices and for every  ordering $\cG$ of $G$,
$$ R(G) \le \minr(G)\le\OR(\cG)\le\maxr(G)\le\OR(\cK_n)=R(K_n).$$

In the 1980s, Chv\'{a}tal, R\"{o}dl, Szemer\'{e}di, and Trotter~\cite{crst83} showed that the Ramsey number $\R(G)$ of every $n$-vertex graph $G$ of constant maximum degree is linear in $n$.
In contrast, Balko, Cibulka, Kr\'{a}l, and Kyn\v{c}l~\cite{bckk13} and, independently, Conlon, Fox, Lee, and Sudakov~\cite{cfls14} recently showed that there are ordered matchings $\cM_n$ on $n$ vertices with $\OR(\cM_n)$ superpolynomial in $n$.

\begin{theorem}[\cite{bckk13,cfls14}]
\label{thm-superpolynomialMatching}
There is a constant $C>0$ such that for every even $n\ge4$, 
\[\maxr(M_n) \geq n^{\log{n}/C\log{\log{n}}},\]
where $M_n$ is a perfect matching (that is, a $1$-regular graph) on $n$ vertices.
\end{theorem}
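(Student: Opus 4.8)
The plan is to exploit the definition of $\maxr(M_n)$ as a maximum: it suffices to produce, for each even $n$, a single ordering $\cM$ of $M_n$ with $\OR(\cM)$ as large as claimed. Moreover, since deleting edges or vertices of an ordered graph can only decrease its ordered Ramsey number, we may build a good ordering on some convenient number $n' \le n$ of vertices and then pad it with arbitrarily placed matching edges to an ordering of all of $M_n$; so we may assume $n$ has any convenient form, say $n = \Theta(t^{d})$ for parameters $t$ and $d$ chosen below. The idea is to construct $\cM$ by a self-similar recursion and to certify the lower bound by a matching, self-similar family of $2$-colorings of large ordered complete graphs that avoid monochromatic copies of $\cM$.

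Concretely, fix a branching parameter $t=\Theta(\log n)$ and a depth $d=\Theta(\log n/\log\log n)$, so that $t^{d}=\Theta(n)$. Define ordered matchings $\cM_0\subseteq\cM_1\subseteq\cdots\subseteq\cM_d=\cM$, all of interval chromatic number two, where $\cM_0$ is a single edge and $\cM_j$ is assembled from $t$ vertex-disjoint copies of $\cM_{j-1}$ by concatenating their left halves in order while \emph{interleaving} their right halves, so that, reading the right endpoints of $\cM_j$ from left to right, one sees the first right endpoint of each of the $t$ copies, then the second right endpoint of each copy, and so on. This bit-reversal-type interleaving is the feature that destroys the ``path-like'' structure responsible for merely polynomial ordered Ramsey numbers.

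The heart of the argument is the recursive claim: if some $2$-coloring of $\cK_N$ has no monochromatic copy of $\cM_{j-1}$, then some $2$-coloring of $\cK_{N'}$ with $N'\ge s_j\cdot N$ has no monochromatic copy of $\cM_j$, where $s_j$ is of the order of $|V(\cM_j)|$ (hence roughly $t^{j}$). One builds the coloring of $\cK_{N'}$ by blowing up each vertex of $\cK_N$ into a block of $s_j$ consecutive vertices, coloring each block internally by a good $\cM_{j-1}$-avoiding coloring furnished by the inductive hypothesis, and coloring an edge between two distinct blocks by the color its endpoints' blocks receive in the given coloring of $\cK_N$. Given a hypothetical monochromatic, say red, copy $\phi$ of $\cM_j$ in this blow-up, one analyses how the $t$ sub-copies of $\cM_{j-1}$ inside $\phi$ distribute among the blocks: if a sub-copy is confined to one block, that block carries a red copy of $\cM_{j-1}$, contradicting the choice of the internal coloring; otherwise the interleaving of the right halves forces the sub-copies to occupy the blocks in a sufficiently spread-out pattern that, after projecting blocks back to vertices of $\cK_N$, one recovers a red copy of $\cM_{j-1}$ there, again a contradiction. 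Iterating the claim $d$ times from the trivial $\cM_0$-avoiding coloring of $\cK_1$ gives an $\cM$-avoiding coloring of $\cK_{N_d}$ with $N_d\ge\prod_{j=1}^{d}s_j=t^{\Omega(d^2)}=2^{\Omega((\log n)^2/\log\log n)}=n^{\Omega(\log n/\log\log n)}$, whence $\OR(\cM)>N_d$.

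The step I expect to be the main obstacle is the verification that the blow-up coloring avoids monochromatic $\cM_j$, and in particular extracting the \emph{large} per-level gain $s_j\approx|V(\cM_j)|$ rather than just a constant factor. This forces one to choose the interleaving pattern of $\cM_j$ and the block size $s_j$ so that a monochromatic copy not confined to a single block must meet many blocks, and then a pigeonhole or convexity estimate bounding how many right endpoints of $\cM_j$ can lie in a single block is what makes a genuine sub-copy descend to $\cK_N$. Tuning the branching $t$, the depth $d$, and the per-level gain so that the final exponent lands precisely at $\log n/C\log\log n$ is the delicate part; the remainder is bookkeeping and the routine reduction from general even $n$ to $n$ of the form $t^{d}$. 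An essentially equivalent route replaces the explicit interleaved matching by a uniformly random ordered matching of interval chromatic number two and shows that it avoids the same family of blow-up colorings with high probability, which again suffices since only one good ordering of $M_n$ is needed.
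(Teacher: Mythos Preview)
This theorem is not proved in the present paper: it appears in Section~2 (``Previous results'') with attribution to~\cite{bckk13,cfls14}, and no argument for it is given here. So there is no proof in the paper to compare your proposal against.

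Your outline is nonetheless in the spirit of those references: a self-similar (or random) matching paired with a recursively defined block coloring is indeed the mechanism behind the superpolynomial lower bound, and your closing remark about substituting a uniformly random ordered matching is essentially the approach of~\cite{cfls14}.

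The gap you flag, however, is real and is more than a matter of tuning constants. In your Case~2 you assert that when no sub-copy $\cM^{(i)}\cong\cM_{j-1}$ is confined to a single block, the projection to $\cK_N$ ``recovers a red copy of $\cM_{j-1}$''. But that projection is merely some monochromatic graph on at most $N$ block-vertices; for it to contain an \emph{ordered} copy of $\cM_{j-1}$ you would need some particular $\cM^{(i)}$ to have all of its $2t^{j-1}$ vertices in pairwise distinct blocks and in the correct relative order. The interleaving spreads the right endpoints, but the left halves of the $t$ sub-copies are laid down in consecutive runs of length $t^{j-1}$ each, and with your block size $s_j\approx t^{j}$ a single block can absorb many such runs wholesale; so the pigeonhole/convexity estimate you allude to does not by itself force any $\cM^{(i)}$ to be block-injective. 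Note too that if the recursion \emph{did} work exactly as you state it, with both the base coloring on $\cK_N$ and the internal block colorings taken to be the inductively given $\cM_{j-1}$-avoiding coloring, nothing stops you from taking $s_j=N_{j-1}$ and obtaining $N_j=N_{j-1}^2$, hence a doubly-exponential lower bound---which is far too strong and signals that the recursion as written cannot be correct without a further constraint tying $s_j$ to the structure of $\cM_j$. Turning this into an actual proof requires a sharper structural lemma relating the specific matching to interval partitions (or, on the random side, a careful union bound over block patterns as in~\cite{cfls14}); that lemma is where the substance of the cited proofs lies, and it cannot be waved through.
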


In fact, Conlon~et~al.~\cite{cfls14} showed that this lower bound is true for almost every ordered matching on $n$ vertices.

A subset $I$ of vertices of an ordered graph $\cG=(G,\prec)$ is an \emph{interval} if  for every pair $u,v$ of vertices of $I$ with $u\prec v$, every vertex $w$ of $G$ satisfying $u\prec w\prec v$ is contained in $I$.
The following notion is strongly related to the growth rate of ordered Ramsey numbers, as we will see later.
The \emph{interval chromatic number} of an ordered graph $\mathcal{G}$ is the minimum number of intervals the vertex set of $\mathcal{G}$ can be partitioned into such that there is no edge between vertices of the same interval.

We say that a sequence of pairwise disjoint intervals $(I_1,\ldots,I_m)$ in $\cG$ is \emph{order-obeying}, if $u \prec v$ for all $u \in I_i$, $v \in I_{i+1}$, and $i\in [m-1]$.
That is, $I_i$ is to the \emph{left} of $I_{i+1}$ for every $i \in [m-1]$.

Balko~et~al.~\cite{bckk13} and Conlon~et~al.~\cite{cfls14} independently showed that for every $n$-vertex ordered graph $\cG$ of bounded degeneracy and bounded interval chromatic number the ordered Ramsey number $\OR(\cG)$ is at most polynomial in the number of vertices of $\cG$.
The bound by  Conlon~et~al.~\cite{cfls14} is stronger and we state it as the following theorem.

\begin{theorem}[\cite{cfls14}]
\label{thm-intChromNum}
Every $d$-degenerate ordered graph $\cG$ with~$n$ vertices and with the interval chromatic number $\chi$ satisfies
\[\OR(\cG) \leq n^{32d\log{\chi}}.\]
\end{theorem}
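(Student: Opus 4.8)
The plan is to prove the inequality by induction on the interval chromatic number $\chi$, halving $\chi$ at each step at the cost of a factor $n^{32d}$ in the order of the host clique; after about $\log\chi$ halvings $\cG$ becomes edgeless, which accounts for the exponent. Up to a constant factor in the exponent one may assume $\chi$ is a power of two, and the case of an edgeless $\cG$ (where $\OR(\cG)=n$) is trivial, so the base case is $\chi=2$.

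For $\chi=2$ the graph $\cG$ is bipartite, say with left part $A$ and right part $B$, and here I expect $\OR(\cG)\le n^{32d}$ with room to spare. Given a $2$-colouring of $\cK_N$ with $N=n^{32d}$, split the vertices of $\cK_N$ into the first half $P$ and the second half $Q$ and try to embed $A$ into $P$ and $B$ into $Q$ order-preservingly; the order between $A$-images and $B$-images is then automatic. I process the vertices of $\cG$ in a degeneracy order $w_1,\dots,w_n$ (each $w_i$ having at most $d$ neighbours among $w_1,\dots,w_{i-1}$), and I maintain for every not-yet-placed vertex $w_i$ a \emph{candidate set} $C_i$, a subset of $P$ or $Q$ of size at least $n^3$, initially all of $P$ or $Q$. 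When $w_i$ is placed, each of its at most $d$ earlier neighbours has, at the time it was placed, intersected $C_i$ with its red neighbourhood (I first attempt a red copy), so every vertex of $C_i$ is red-joined to all those neighbours; I place $w_i$ at a vertex of $C_i$ in the position forced by the ordering (possible, as $|C_i|>n$) and then intersect $C_j$ with the red neighbourhood of that vertex for every later neighbour $w_j$ of $w_i$. Since each set is modified at most $d$ times, to keep all of them of size at least $n^3$ it suffices that every modification loses a factor at most $n^{-6}$; and if no admissible placement of $w_i$ achieves this for all later neighbours at once, then for one such neighbour $w_j$ there is a set of at least $|C_i|/n\ge n^2$ vertices of $C_i$, each red-joined to fewer than an $n^{-6}$ fraction of $C_j$, so the blue density between these two sets is at least $1-n^{-6}$; a short random-subset argument then yields a complete blue bipartite subgraph with at least $n$ vertices on each side, hence a blue copy of $\cG$. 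Thus one of the two colours succeeds.

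For the inductive step, group the intervals of $\cG$ into consecutive pairs and let $\cG'$ be $\cG$ with all edges inside the merged pairs removed; then $\cG'$ is $d$-degenerate with interval chromatic number at most $\lceil\chi/2\rceil$, while the removed edges split into bipartite $d$-degenerate ordered graphs, one per pair. Given a $2$-colouring of $\cK_N$ with $N=n^{32d}\cdot N'$ where $N'=n^{32d\log\lceil\chi/2\rceil}$, I would use the factor $n^{32d}$ of extra room, by a variant of the base-case procedure, to pass to a subclique of order $N'$ inside which a monochromatic copy of $\cG'$ furnished by the inductive hypothesis can be extended, \emph{in the same colour}, across all the removed edges as well; applying induction to $\cG'$ in that subclique then completes $\cG$.

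The crux is precisely this last point — forcing a single colour throughout. Left to itself the recursion produces monochromatic copies of $\cG'$ and of the removed bipartite graphs in possibly conflicting colours. I would handle this by strengthening the inducted statement so that the $2$-colouring is shown to admit a colour $c$ for which there is a \emph{robust} $c$-coloured copy of $\cG$ — one whose image vertices keep $c$-neighbourhoods of polynomial size inside each interval — which allows the colour to be fixed once, at the top of the recursion, and then forced at every level; the ``monochromatic biclique when stuck'' step above is the prototype of how such robustness arises. Carrying this strengthened invariant through the halving step, in particular controlling the removed bipartite edges in the prescribed colour, is where essentially all the work lies.
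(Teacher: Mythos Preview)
This theorem is not proved in the present paper; it is quoted from Conlon, Fox, Lee, and Sudakov~\cite{cfls14} as background in the section on previous results, so there is no ``paper's own proof'' to compare against.

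That said, your proposal is a sketch, not a proof, and you identify the gap yourself. The base case $\chi=2$ is plausible in outline, though your ``short random-subset argument'' for extracting a blue biclique from a pair of sets with blue density $1-n^{-6}$ is asserted rather than carried out, and the size bookkeeping (why $n^{32d}$ rather than some other exponent) is never pinned down. The inductive step is where the real problem lies: you correctly observe that naively recursing on the graph $\cG'$ with the intra-pair edges removed gives no control over the colour in which the removed bipartite pieces can be embedded, and you propose to fix this by inducting on a stronger ``robust copy'' statement. But you do not actually formulate that stronger statement, let alone verify that it survives the halving step. The difficulty is genuine: one must arrange that the top-level choice of colour propagates through every level of the recursion, and this typically requires proving an off-diagonal two-graph version of the bound (for $\OR(\cG_1,\cG_2)$ with possibly different parameters) so that at each step one can commit to the majority colour and still have enough room. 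As written, the proposal stops exactly at the point where the substantive argument would begin.
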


The following result of Conlon~et~al.~\cite{cfls14} shows that restricting the interval chromatic number does not necessarily force the ordered Ramsey numbers to be linear in the number of vertices.

\begin{theorem}[\cite{cfls14}]
\label{thm-lowerBoundMatch}
There is a constant $C>0$ such that for all $n$ there is an ordered matching $\cM_{2n}$ of
interval chromatic number two with $2n$ vertices satisfying
\[\OR(\cM_{2n}) \geq \frac{Cn^2}{\log^2{n}\log{\log{n}}}.\]
\end{theorem}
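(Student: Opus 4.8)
The plan is to exhibit, for appropriate $n$, an ordered matching $\cM_{2n}$ of interval chromatic number two together with a red/blue coloring of $\cK_N$ with $N$ of order $n^2/(\log^2 n\log\log n)$ that contains no monochromatic copy of $\cM_{2n}$; then $\OR(\cM_{2n})>N$, which is what we want. Encode $\cM_{2n}$ by a permutation $\sigma$ of $[n]$: the left interval is $\{1,\dots,n\}$, the right interval is $\{n+1,\dots,2n\}$, and $i$ is joined to $n+\sigma(i)$. The reformulation to exploit is that a monochromatic copy of $\cM_{2n}$ in a coloring $\chi$ of $\cK_N$ is exactly a color $c$ together with $a_1<\cdots<a_n<b_1<\cdots<b_n$ in $[N]$ with $\chi(a_i,b_{\sigma(i)})=c$ for all $i$; viewing each color class as a $0/1$ matrix on $[N]\times[N]$ supported above the diagonal, this says a monochromatic copy is an occurrence of the permutation matrix of $\sigma$ with all of its rows left of all of its columns. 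So the problem becomes: choose $\sigma$ and a two-coloring of the above-diagonal pairs of $[N]$ so that neither color class has such an occurrence, with $N$ as large as possible. (For calibration: applying the matching direction of the Marcus--Tardos theorem to the bipartite part after a midpoint split shows $\OR(\cM)=O(n^2)$ for every such matching, so $n^2$ is the right target and the purely random coloring, which only gives a linear bound, is far from optimal.)

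I would assemble the coloring of $\cK_N$ by recursion on $N$ from a single ``bipartite gadget'': a red/blue coloring $\psi$ of the ordered complete bipartite graph on $[M]\times[M]$, with $M$ of order $n^2/(\log^2 n\log\log n)$, in which neither color class contains $a_1<\cdots<a_n$ on the left and $b_1<\cdots<b_n$ on the right with all pairs $(a_i,b_{\sigma(i)})$ present. Given such a $\psi$ and a good coloring $\chi_0$ of $\cK_{N_0}$, one lays $M$ disjoint copies of $\cK_{N_0}$ in a row, colors pairs inside a copy by $\chi_0$, and colors all pairs between the $s$-th and $t$-th copy by $\psi$; a monochromatic copy of $\cM_{2n}$ in the result either lives inside one copy of $\cK_{N_0}$ (excluded) or induces a monochromatic occurrence of $\sigma$ across the copies (excluded by $\psi$), so the parameter roughly squares and iterating quickly reaches the desired order of magnitude. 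Hence it suffices to produce the gadget $\psi$.

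The heart is therefore the gadget: equivalently, a permutation $\sigma$ whose pattern can be avoided simultaneously by two classes covering all of $[M]\times[M]$ with $M\approx n^2/\mathrm{polylog}$. Here I would take $\sigma$ to be a self-similar permutation — a small ``complex'' pattern substituted into itself a bounded number of times — and realize each color class as the corresponding self-similar, near-extremal pattern-avoiding $0/1$ matrix, built by a product construction over a small number of levels with per-level branching a suitable power of $\log n$; each level loses a polylogarithmic factor off the trivial bound $M\le n^2$, which is what produces the $\log^2 n\log\log n$. The self-similarity of $\sigma$ is also what lets one control the induced sub-matchings $\cM^{\sigma|_S}_{2|S|}$ that appear when a copy straddles two copies of $\cK_{N_0}$ in the recursion, keeping that step manageable. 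The restriction to interval chromatic number two is precisely the constraint that $\cM_{2n}$ corresponds to a single permutation: this makes the matrix reformulation available and caps the construction at quadratic growth, in contrast with the superpolynomial growth for general orderings in Theorem~\ref{thm-superpolynomialMatching}.

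The step I expect to be the main obstacle is the gadget itself: choosing the permutation $\sigma$ and the explicit, near-optimal two-coloring of $[M]\times[M]$ avoiding its pattern in both color classes, and proving that the recursive/product scheme loses only polylogarithmic factors rather than a power of $n$. The bookkeeping around straddling copies and induced sub-patterns in the doubling step is a secondary, more routine difficulty, and the final constant $C$ is extracted by optimizing the number of levels against the branching factor.
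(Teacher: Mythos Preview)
First, note that the paper does not itself prove Theorem~\ref{thm-lowerBoundMatch}: it is quoted from Conlon, Fox, Lee, and Sudakov~\cite{cfls14}, with the remark that their argument is based on the Van der Corput sequence. The paper's own contribution in this direction is the stronger Theorem~\ref{thm-improvedLowerBoundMatch}, proved in Section~\ref{sec:proofImprovedLowerBoundMatch} by a single-level random blow-up, not a recursion.

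Your recursive step has a genuine gap. In the blow-up of $\chi_0$ by the gadget $\psi$, you claim that a monochromatic copy of $\cM_{2n}$ not contained in a single block ``induces a monochromatic occurrence of $\sigma$ across the copies.'' It does not. As soon as $N_0\ge n$, one may place all of $a_1,\dots,a_n$ in block~$1$ and all of $b_1,\dots,b_n$ in block~$2$; then every edge $\{a_i,b_{\sigma(i)}\}$ receives the single colour $\psi(1,2)$, which contradicts nothing about the gadget. In general the block indices of the $a_i$'s and of the $b_j$'s are only weakly increasing, so what the copy induces in $\psi$ is an arbitrary \emph{contraction} of $\sigma$ (collapsing runs of equal indices), not $\sigma$ itself; your gadget property, stated for strictly increasing rows and columns, does not forbid contractions, and forbidding all contractions down to a single cell is clearly impossible in a two-colouring. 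Consequently the recursion is valid only from $N_0=1$ to $N_1=M$, so nothing is being ``squared'': the bipartite gadget \emph{is} the whole construction, and you have not built it. The talk of self-similar permutations and product constructions of near-extremal pattern-avoiding matrices remains a wish, not a construction.

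The mechanism that actually works, both in~\cite{cfls14} and in the paper's proof of Theorem~\ref{thm-improvedLowerBoundMatch}, is different in kind. One selects $\cM$ with a \emph{spread} property: for every partition of $[n]$ and of $[2n]\setminus[n]$ into not too many intervals, many cells $(I_i,J_j)$ contain an edge of $\cM$. In~\cite{cfls14} this is ensured deterministically by the Van der Corput (bit-reversal) permutation; in the paper it holds asymptotically almost surely for a random matching via Lemmas~\ref{lem-matchLargeIntervals} and~\ref{lem-matchSmallIntervals}. One then takes a uniformly random $2$-colouring $c$ of the complete graph with loops on $[t]$, blows it up once into a colouring $c'$ of $\cK_{st}$, observes that any embedding of $\cM$ into $c'$ induces an interval partition of its two sides and hence forces many edges of $c$ to share a colour, and finishes with a union bound over the possible partitions. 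There is no recursion, no product construction, and the self-similarity of the Van der Corput permutation is used only to certify the spread property, not to assemble the colouring.
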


Conlon~et~al.~\cite{cfls14} proved this result using the well-known \emph{Van der Corput sequence}.
Some applications of this sequence appear in the discrepancy theory~\cite{mat99}.

Using an elementary argument (see~\cite{cfls14}), it can be shown that every ordered matching $\cM_{2n}$ with $2n$ vertices and with interval chromatic number two satisfies $\OR(\cM_{2n}) \leq 2n^2$.
Asymptotically, this is the best known upper bound on $\OR(\cM_{2n})$.
It would be interesting to close this gap~\cite[Problem~6.3]{cfls14}.

By the result of Conlon~et~al.~\cite{cfls14} mentioned below Theorem~\ref{thm-superpolynomialMatching},
the ordered Ramsey number of almost every ordered matching is superpolynomial in the number of vertices.
In contrast, it is not difficult to find an ordered matching~$\cM$ on $2n$ vertices with $\OR(\cM)$ only linear in $n$.
An example is the ordered matching $\mathcal{M}_{2n}$ on $[2n]\colonequals\{1,\ldots,2n\}$ with edges $\{i,2n+1-i\}$ for $i=1,\ldots,n$;
see part~a) of Figure~\ref{fig-linearOrderings}.
Another example is the ordered matching $\cM'_{2n}$ sketched in part~b) of Figure~\ref{fig-linearOrderings}.
The simple fact that $\OR(\cM_{2n}), \OR(\cM'_{2n}) \leq 4n-2$ can be proved easily using the pigeonhole principle.

\begin{figure}[ht]
\centering
\includegraphics{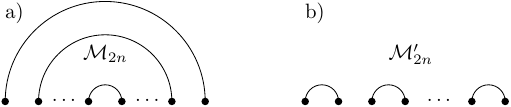}
\caption{Two orderings of 1-regular graphs with the ordered Ramsey number linear in the number of vertices.}
\label{fig-linearOrderings}
\end{figure}

In particular, it follows that every graph $G$ with maximum degree 1 satisfies $\minr(G)\le4n-2$.
It is a natural question whether for every fixed positive integer $\Delta$ there is a constant $c_\Delta$ such that
$\minr(G)\le c_\Delta n$ holds for every graph $G$ on $n$ vertices with maximum degree $\Delta$.
Conlon~et~al.~\cite{cfls14} consider this to be unlikely and pose the following problem.

\begin{problem}[{\cite[Problem~6.7]{cfls14}}]
\label{prob-2Regular}
Do random 3-regular graphs have superlinear ordered Ramsey numbers for all orderings?
\end{problem}

\section{Our results}
\label{subsec:ouResults}

As our first main result, we give an affirmative answer to Problem~\ref{prob-2Regular}. In fact, we solve the problem in a slightly more general setting, by extending the concept of $d$-regular graphs to non-integral values of~$d$.
For a real number $\rho>0$ and a positive integer $n$ with $\lceil \rho n \rceil$ even, we say that a graph $G$ on $n$ vertices is \emph{$\rho$-regular}, if every vertex of $G$ has degree $\lfloor \rho\rfloor$ or $\lceil \rho\rceil$ and the total number of edges of $G$ is $\lceil\rho n\rceil/2$.

Note that when $\rho$ is an integer, the above definition coincides exactly with the standard notion of regular graphs.
We let $G(\rho,n)$ denote the random $\rho$-regular graph on $n$ vertices drawn uniformly and independently from the set of all $\rho$-regular graphs on the vertex set $[n]$.

The graph $G(\rho,n)$ satisfies an event $A$ \emph{asymptotically almost surely} if the probability that $A$ holds tends to 1 as $n$ goes to infinity.

We may now state our first main result. Part~\ref{item1-thmLowerBound} in it shows that random $\rho$-regular graphs have superlinear minimum ordered numbers for any fixed real number $\rho>2$,
and part~\ref{item2-thmLowerBound} in it shows that there are actually ``almost 2-regular'' graphs with superlinear minimum ordered Ramsey numbers.

\begin{theorem}
\label{thm-lowerBoundAllOrderings}
\leavevmode
\begin{enumerate}[label=(\roman*)]
\item\label{item1-thmLowerBound} For every fixed real number $\rho>2$, asymptotically almost surely
\[\minr(G(\rho,n)) \geq \frac{n^{3/2-1/\rho}}{4\log{n}\log{\log{n}}}.\]
In particular, almost every 3-regular $n$-vertex graph $G$ satisfies 
\[\minr(G) \geq n^{7/6}/(4\log{n}\log{\log{n}}),\]

\item\label{item2-thmLowerBound} Asymptotically almost surely,
\[\minr\left(G\left(2+\frac{9\log{\log{n}}}{\log{n}},n\right)\right) \geq \frac{n\log{n}}{2\log{\log{n}}}.\]
\end{enumerate}
\end{theorem}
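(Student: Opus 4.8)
The plan is to reduce the statement about all orderings to a statement about a single, cleverly chosen ordering, and then to a counting argument showing that a random $\rho$-regular graph avoids all ``small'' colored complete graphs. Concretely, fix a target value $N$ (roughly $n^{3/2-1/\rho}$ up to logarithmic factors in part~\ref{item1-thmLowerBound}, roughly $n\log n/\log\log n$ in part~\ref{item2-thmLowerBound}). To prove $\minr(G(\rho,n))>N$ it suffices to exhibit, for \emph{every} ordering $\cG$ of $G(\rho,n)$, a coloring of $\cK_N$ with no monochromatic copy of $\cG$. The key reduction is the following: there is a universal family of $2$-colorings of $\cK_N$ — coming from the superpolynomial lower bound construction, e.g.\ an iterated/blow-up coloring as in the proof of Theorem~\ref{thm-superpolynomialMatching}, or a random coloring — such that any ordered graph appearing monochromatically in all of them must contain, for every window size, a long monochromatic ordered substructure. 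The point is that an $n$-vertex ordered graph with bounded degree has too few edges and too ``spread out'' an adjacency pattern to embed into these colorings once $N$ is below the stated threshold, and this failure is insensitive to which ordering of $G$ we picked, provided $G$ itself is ``generic''.

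First I would isolate the combinatorial property of $G(\rho,n)$ that drives everything. Because $G(\rho,n)$ has $\lceil\rho n\rceil/2$ edges and, asymptotically almost surely, no short cycles and good expansion, any ordering $\cG$ of it has the feature that for every partition of $[n]$ into two order-obeying intervals $(I_1,I_2)$ of roughly equal size, the number of edges crossing between $I_1$ and $I_2$ is $\Theta(\rho n)$, hence in particular at least, say, $\rho n/4$; more importantly, iterating this over a dyadic decomposition, at scale $2^k$ the graph induced on any interval of length $2^k$ still has about $\rho 2^{k}/2$ edges and bounded degree, so it cannot be squeezed into a monochromatic block that is too small. I would quantify this via a counting lemma: the number of labelled $n$-vertex ordered graphs that are $d$-degenerate (or just have max degree $\lceil\rho\rceil$) and that embed monochromatically into \emph{every} coloring in the chosen family is at most $M^n$ for an explicit $M=M(N,\rho)$, and one checks $M^n$ is far smaller than the number of $\rho$-regular graphs on $[n]$, namely $n^{(1+o(1))\rho n/2}$ by the configuration model count. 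Comparing exponents, the crossover happens exactly when $N$ is of order $n^{3/2-1/\rho}$ up to the $\log n\log\log n$ correction, which produces part~\ref{item1-thmLowerBound}; pushing $\rho$ down to $2+\Theta(\log\log n/\log n)$ turns the $n^{1/2-1/\rho}=n^{o(1)}$ gain into a $\log n/\log\log n$ gain and yields part~\ref{item2-thmLowerBound}.

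The technical heart, and the step I expect to be the main obstacle, is the counting lemma: bounding the number of bounded-degree ordered graphs embeddable monochromatically into the whole family of colorings. The naive bound ``choose neighborhoods vertex by vertex'' gives roughly $n^{\rho n/2}$ possibilities and is useless; one must exploit that a monochromatic embedding into the structured coloring forces, recursively, that the neighbors of vertex $i$ lie in a restricted set whose size is polynomial in $N$ rather than in $n$. I would set this up by the same hierarchical ``interval splitting'' idea used in the upper-bound proof of Theorem~\ref{thm-intChromNum}: a monochromatic ordered subgraph of $\cK_N$ is forced, at each of the $\log N$ scales, to confine its edges to one side of a split in a way governed by the colors, so the total information needed to specify $\cG$ is $n$ times something like $(\log N)\cdot O(\rho)$ bits — i.e.\ $M=N^{O(\rho)}$ — whence $M^n = N^{O(\rho n)}$, which beats $n^{\rho n/2}$ precisely when $N=n^{3/2-1/\rho-o(1)}$. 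Making the recursion bookkeeping airtight (handling vertices that straddle a split, and ensuring the bound is uniform over all orderings of the same graph) is where the real work lies; everything else is either the configuration-model edge count, a first-moment/union bound over the (few) orderings-up-to-the-relevant-equivalence, and routine manipulation of the exponents.
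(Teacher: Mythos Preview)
Your proposal does not constitute a proof: the central ``counting lemma'' is left as a heuristic, and the heuristic itself does not produce the claimed exponent. You assert that a monochromatic embedding into a hierarchical coloring pins down $\cG$ with $O(\rho\log N)$ bits per vertex, giving at most $N^{O(\rho n)}$ admissible ordered graphs. Comparing this to the $n^{(1+o(1))\rho n/2}$ labelled $\rho$-regular graphs yields the condition $N^{C\rho}\le n^{\rho/2}$, i.e.\ $N\le n^{1/(2C)}$, which is sublinear and has nothing to do with $n^{3/2-1/\rho}$. No choice of hidden constants fixes this; the $3/2-1/\rho$ exponent simply does not fall out of the entropy balance you describe. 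Separately, your ``union bound over the (few) orderings-up-to-equivalence'' is not few: a random $\rho$-regular graph is asymptotically almost surely rigid, so it has $n!$ distinct orderings, and you give no mechanism for collapsing them.

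The paper's argument avoids both difficulties by never summing over orderings at all. It first isolates a deterministic property of $G=G(\rho,n)$ (Lemma~\ref{lem-setPartitionManyEdges}): for every partition of $V(G)$ into $t$ classes of size at most $s$, more than $M$ of the $\binom{t}{2}+t$ pairs of classes carry an edge. This is proved by a first-moment count that balances $t^n\binom{t^2}{M}\binom{s^2 M}{\rho n/2}$ against the number $D\approx n^{\rho n/2}$ of $\rho$-regular graphs; equating exponents is exactly what forces $s=n^{\varepsilon}$ with $\varepsilon<1/2-1/\rho$. Once $G$ has this property, a \emph{single} random blow-up coloring of $\cK_{st}$ (color each pair of $s$-blocks uniformly) works for \emph{every} ordering $\cG$ of $G$ simultaneously: a monochromatic copy of $\cG$ induces a partition of $[n]$ into $t$ intervals of size $\le s$, and by the lemma this forces $>M$ block-pairs to be one colour, an event of probability $<2^{-M}$. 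The union bound is then over the $\binom{n+t-1}{t-1}\le 2^{2t\log(1/\zeta)}$ interval partitions, not over the $n!$ orderings, and the parameters $t=\zeta n/(2\log(1/\zeta))$, $M=\zeta n$ are tuned so that this product is below $1/2$. Theorem~\ref{thm-lowerBoundAllOrderings} itself is just Theorem~\ref{thm-lowerBoundAllOrderingsAlmostReg} with $\varepsilon_n=1/2-1/\rho-1/\log n$, $\zeta_n=1/\log n$ for part~\ref{item1-thmLowerBound} and $\varepsilon_n=2\log\log n/\log n$, $\zeta_n=1/\log n$ for part~\ref{item2-thmLowerBound}.
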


In contrast to this result, we show that the trivial linear lower bound is asympto\-tically the best possible for graphs of maximum degree two.

\begin{theorem}
\label{thm-2Regular}
There is an absolute constant $C$ such that for every graph $G$ on $n$ vertices with maximum degree 2, $\minr(G) \leq Cn$.
\end{theorem}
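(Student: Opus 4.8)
The plan is to exploit the structure of graphs of maximum degree $2$: every such graph is a vertex-disjoint union of paths, cycles, and isolated vertices. Isolated vertices contribute nothing to ordered Ramsey numbers, so it suffices to (a) equip each path $P_\ell$ and each cycle $C_\ell$ with an ordering whose ordered Ramsey number is $O(\ell)$, and (b) assemble the orderings of the components of $G$ into a single ordering of $G$ without spoiling linearity. Step (b) is a routine disjoint-union argument: placing the components on consecutive order-obeying intervals, one inducts on the number of components, using that the ordered Ramsey number of an order-obeying concatenation of two ordered graphs is linearly controlled by theirs (in each step one colour already produces a copy of the outstanding components in the relevant interval, or else the complementary colour does), so the final bound is again linear in $n$. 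Since a cycle is not an ordered subgraph of any path, the two connected cases genuinely both need to be handled, and this is where the content of the theorem lies.

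For a single path, Theorem~\ref{thm-intChromNum} is of no use, since it only yields polynomial bounds, so I would instead imitate the elementary arguments showing that the basic ordered matchings have linear ordered Ramsey number --- e.g.\ the nested matching $\mathcal{M}_{2n}$ above and the ``side-by-side'' matching, each handled by a single pigeonhole over $\Theta(n)$ mutually compatible ``slots''. The goal is to choose an ordering of $P_\ell$ with the following features: group the vertices into consecutive sub-paths of bounded length, lay these blocks out on a sequence of order-obeying intervals, but alternate the internal orientation of the blocks so that (i) every edge joins vertices in the same block or in two neighbouring blocks --- so the ordering is ``local'', and in particular its longest monotone sub-path has bounded length, which removes the $\Omega(\ell^2)$ lower bound coming from monotone paths --- and (ii) the edge set splits into a bounded number of matchings, each a concatenation of basic matchings of the nested, side-by-side, or crossing type. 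A cycle $C_\ell$ is then treated identically, with its single extra edge made local as well.

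The estimate $\OR(\cP_\ell)=O(\ell)$ would be proved by hand. One takes $\cK_N$ with $N=O(\ell)$ and scans it from left to right in blocks; inside each constant-size block a Ramsey argument produces a monochromatic clique, and, using the basic matching facts, one builds a monochromatic copy of $\cP_\ell$ block by block. The subtlety is that at each step one must carry along not a single partial copy but a whole family of partial copies, together with their colours and their ``exposed'' vertices (those still to be joined to the next block); because the ordering is local, only the exposed vertices of the current block constrain the next step, so this family stays manageable, and a failure to extend a partial copy in one colour is converted into progress in the other colour within a bounded window, which with only $O(\ell)$ blocks cannot happen too often.

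The main obstacle is precisely this propagation of colour along the connected structure: a monochromatic copy of a path must agree in colour on $\Omega(\ell)$ consecutive connecting edges, and a naive block-by-block pigeonhole fails because the monochromatic structures found in different blocks may have alternating colours. The ordering and the embedding procedure therefore have to be designed hand in hand, so that the colour constraints tying together successive blocks involve only boundedly many vertices and can be met --- or flipped to the other colour --- at the cost of a constant factor rather than a polynomial or exponential one.
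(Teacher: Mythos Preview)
Your step~(b) is not routine; it is precisely where the difficulty lies, and the approach you describe fails. The paper opens Section~\ref{sec:proof2Regular} with exactly this counterexample: let $G$ consist of one $C_{n/3}$ together with $2n/9$ copies of $C_3$, and let $\cG$ be \emph{any} ordering obtained by placing these cycles side by side on disjoint intervals. Colour $\cK_N$ with $N=(n/3-1)\cdot 4n/9$ by partitioning its vertex set into $4n/9$ intervals of size $n/3-1$, blue within each interval and red between intervals. There is no blue copy of $\cG$, since the connected component $C_{n/3}$ cannot fit inside any single interval; there is no red copy, since each red $C_3$ needs its three vertices in three distinct intervals, and $2n/9$ non-interleaving triangles then force more than $4n/9$ intervals. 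Hence $\OR(\cG)=\Omega(n^2)$ no matter how well each individual cycle is ordered. Consequently no additive bound $\OR(\cG_1+\cG_2)\le \OR(\cG_1)+\OR(\cG_2)$ can hold in general, and any bound with a multiplicative constant $c>1$ blows up under $\Theta(n)$ iterations; either way your induction does not yield a linear bound.

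The paper's remedy is to abandon side-by-side placement altogether. The components are \emph{nested}: each alternating cycle is inserted between the two vertices of the ``inner edge'' of the previous one, and the extra edges needed for odd cycles are gathered into a single nested-matching structure at the far left. The resulting ordering of $G$ is then an ordered subgraph of one fixed auxiliary graph $\cF^2_n$ (the $2$-blow-up $\cP^2_n$ of the alternating path, augmented by a gadget $\cT^2_n$ that supplies the triangles for odd cycles), and all the work goes into proving $\OR(\cF^2_n)=O(n)$ via Tur\'an-type lemmas for blow-ups of alternating paths and, for the odd-cycle part, the triangle removal lemma. Your discussion of a single path or cycle is in the right spirit --- alternating paths are indeed the basic building block --- but the proof is carried by the global interleaved ordering across components, not by a per-component bound glued together afterwards.
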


In fact, the following stronger Tur\'{a}n-type statement is true when $G$ is bipartite.

\begin{theorem}
\label{thm-evenCycleInBlowUp}
For every real $\varepsilon >0$, there is a constant $C(\varepsilon)$ such that, for every integer $n$, every bipartite graph $G$ on $n$ vertices with maximum degree 2 admits an ordering $\cG$ of $G$ that is contained in every ordered graph with $N \colonequals C(\varepsilon) n$ vertices and with at least $\varepsilon N^2$ edges.
\end{theorem}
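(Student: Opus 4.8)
The plan is to reduce Theorem~\ref{thm-evenCycleInBlowUp} to a statement about a single ordering that every sufficiently dense ordered graph must contain, and to exploit the fact that a bipartite graph of maximum degree~$2$ is just a disjoint union of paths and even cycles. First I would fix a convenient ``canonical'' ordering: realize $G$ as a subgraph of a long even cycle $C_m$ (on $m\le 2n$ vertices, say) together with some isolated vertices, and place the vertices of $C_m$ in the \emph{bisection order} $1,3,5,\dots,2,4,6,\dots$ — that is, all odd-indexed vertices of the cycle first, in cyclic order, then all even-indexed vertices. This is the ordering with interval chromatic number two in which the two intervals are the two colour classes of the bipartition, and in which consecutive cycle vertices are ``spread out'' as evenly as possible across each interval. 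I would then argue that this ordering $\cG$ embeds into \emph{any} $N$-vertex ordered graph $\cH$ with at least $\varepsilon N^2$ edges, provided $N=C(\varepsilon)n$.

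The heart of the argument is an embedding lemma for this spread-out ordering. The key step is the following: since $\cH$ has $\ge\varepsilon N^2$ edges, after deleting vertices of small degree we may assume every vertex of $\cH$ has degree $\ge (\varepsilon/2) N$; now I would try to build the ordered path/cycle greedily, but the naive greedy bound loses a factor per vertex. Instead, the right tool is a \emph{dependent random choice / expansion} argument: find a large set $U$ of vertices such that every small subset of $U$ has many common neighbours, or equivalently pass to a bipartite-like pattern in which one can embed the cycle by a ``backtracking'' or ``two-ended'' strategy. More concretely, I would split the vertex set of $\cH$ into $\Theta(1/\varepsilon)$ consecutive blocks of equal size; by convexity some pair of blocks (or some block with itself, in the dense case) spans a positive fraction $\varepsilon'=\varepsilon'(\varepsilon)$ of all possible edges between them, and then inside those blocks I can recursively find the image of the path, using that the spread-out ordering sends the $k$-th vertex of the cycle to a predictable sub-block. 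The even-cycle case follows from the path case by closing up: the two endpoints of the path each have $\ge\varepsilon' N$ neighbours in a prescribed block, so a common neighbour (or a direct edge, depending on parity) exists once $N$ is large enough in terms of $\varepsilon$.

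In more detail, here is the structure I would write. \textbf{Step 1.} Reduce $G$ to a single even cycle $C_m$ plus isolated vertices, and define the ordering $\cG$ as the bisection ordering described above; note its interval chromatic number is $2$ and the isolated vertices can be appended anywhere. \textbf{Step 2.} Clean up $\cH$: repeatedly delete vertices incident to fewer than $(\varepsilon/2)N$ edges; this removes fewer than $(\varepsilon/2)N^2$ edges, so at least $(\varepsilon/2)N^2$ remain and the surviving induced subgraph $\cH'$ on $N'\ge \varepsilon N/2$ vertices (a cruder count suffices) has minimum degree $\ge (\varepsilon/2)N \ge (\varepsilon/2)N'$. \textbf{Step 3.} Partition the ordered vertex set of $\cH'$ into $t\colonequals \lceil 4/\varepsilon\rceil$ consecutive intervals $J_1\prec\cdots\prec J_t$ of nearly equal size; by averaging there is an index $i$ such that the set of vertices of $\cH'$ having $\ge (\varepsilon/(4t))N'$ neighbours in $J_i$ has size $\ge (\varepsilon/4)N'$. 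Fix such a $J\colonequals J_i$ as a ``reservoir''. \textbf{Step 4.} Embed the ordered path underlying $\cG$ greedily in the order dictated by the bisection ordering — first the ``left half'' (the odd vertices of $C_m$) left-to-right, then the ``right half'' — where at each step the next vertex must land in a specified sub-interval and be adjacent to at most two already-embedded vertices; maintain the invariant that the candidate set has size $\ge (\varepsilon/(4t))N' - (\text{number of steps})$, which stays positive because the number of cycle vertices is $m\le 2n$ and $N' \gg n$ once $C(\varepsilon)$ is large. \textbf{Step 5.} Close the cycle: the first and last embedded vertices each retain $\ge (\varepsilon/(4t))N' - m$ neighbours in $J$, and for $N'$ large this is more than half of $|J|$, giving a common neighbour (or, when the two ends need to be adjacent directly rather than via $J$, one simply builds the path so its ends are forced adjacent by choosing the reservoir pair appropriately). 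This yields the embedding of $\cG$ into $\cH$, and tracking the constants gives an explicit $C(\varepsilon)$.

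The main obstacle, and the place where I expect to spend the most care, is \textbf{Step 4}: making the greedy embedding respect the ordering. The bisection ordering is chosen precisely so that the $k$-th vertex along the cycle and the $(k{+}1)$-st vertex land in intervals whose relative position is predictable, but one must verify that the ``specified sub-interval'' for each vertex genuinely contains enough high-degree vertices — this is where the clean-up in Step~2 and the reservoir from Step~3 must be combined, possibly by iterating the block-averaging argument at several scales so that \emph{every} relevant sub-interval, not just one, is guaranteed to be dense. An alternative that sidesteps the ordering bookkeeping is to prove directly that for the bisection ordering $\cG$ of $C_m$, \emph{every} ordered graph on $N$ vertices with $\ge\varepsilon N^2$ edges contains $\cG$, by a density-increment / regularity-type argument: apply a weak ordered regularity lemma, find an $\varepsilon'$-regular pair straddling the bisection point, and embed; but to keep $C(\varepsilon)$ reasonable (polynomial in $1/\varepsilon$, matching the flavour of the other results in the paper) the elementary block argument above is preferable, so I would push to make Step~4 work by hand.
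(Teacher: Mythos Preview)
Your proposal has two genuine gaps.

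\textbf{Step 1 is wrong.} A bipartite graph of maximum degree~$2$ need not be a subgraph of a single even cycle plus isolated vertices. Take $G=C_4\cup C_4$: any subgraph of a cycle $C_m$ is either a union of paths or $C_m$ itself, so $G$ cannot sit inside any $C_m$. The reduction to one cycle therefore fails whenever $G$ has at least two cycle components, and the ordering must be designed to host several cycles at once. This is precisely what the paper does: it defines an ``alternating cycle'' ordering $\cC_{n_i}$ of each component $C_{n_i}$, then \emph{nests} them --- placing the vertex set of $\cC_{n_i}$ between the two endpoints of the inner edge of $\cC_{n_{i-1}}$ --- and checks that the resulting ordering $\cG$ of $G$ is an ordered subgraph of the $2$-blow-up $\cP^2_n$ of the alternating path. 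No single-cycle reduction is available.

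\textbf{Step 4 does not go through even for one cycle.} In your bisection ordering each vertex of the right interval has \emph{two} prescribed neighbours in the left interval, so when you embed the right half after the left, every new vertex must lie in the common neighbourhood of two already-placed vertices, intersected with a prescribed sub-interval. The minimum-degree clean-up in Step~2 gives no control over common neighbourhoods --- two vertices of degree $(\varepsilon/2)N'$ may share no neighbour --- so the claimed invariant ``candidate set has size $\ge (\varepsilon/(4t))N'-(\text{steps so far})$'' is unsupported. You mention dependent random choice early on, but the detailed Steps~2--5 never invoke it; a single ``reservoir'' block from Step~3 is not enough to supply common neighbours at every step of an ordered embedding. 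The paper resolves exactly this obstacle by passing to the blow-up: Lemma~\ref{lem-TuranBlowUp} partitions $V(\cH)$ into blocks of size $d$, uses K\H{o}v\'ari--S\'os--Tur\'an to find $\Omega(\varepsilon N^2/d^2)$ pairs of blocks spanning a $\cK_{2,2}$, pigeonholes on the ``type'' of these $\cK_{2,2}$'s, and then applies the alternating-path Lemma~\ref{lem-TuranAltPath} to the resulting auxiliary ordered graph on $N/d$ vertices. The common-neighbour requirement is thus handled once, via K\H{o}v\'ari--S\'os--Tur\'an, rather than step by step in a greedy procedure.
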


We note that no such Tur\'{a}n-type statement is true for a $2$-regular graph $G$ that is not bipartite, since then $G$ contains an odd cycle and thus no ordering of such a graph is contained in any ordering of the complete bipartite graph $K_{N/2,N/2}$ with $N^2/4$ edges.

For the multi-colored case, perhaps the following result, which is in similar spirit as Theorem~\ref{thm-evenCycleInBlowUp}, might be true.
For every graph $G$ on $n$ vertices with maximum degree $2$ and for every $\varepsilon > 0$, there is a constant $C(\varepsilon)>0$ such that every ordered graph on $N \colonequals C(\varepsilon) n$ vertices with at least $\varepsilon N^3$ copies of $\mathcal{K}_3$ contains some ordering of $G$ as an ordered subgraph.
If true, such a statement would immediately imply a version of Theorem~\ref{thm-2Regular} for any bounded number of colors.

For the upper bounds in the case of larger maximum degree, a simple corollary of Theorem~\ref{thm-intChromNum}, states that every graph $G$ on $n$ vertices with constant maximum degree $\Delta$ admits an ordering $\cG$ with $\OR(\cG)$ polynomial in~$n$.
To obtain such a bound, it suffices to consider a proper coloring of $V(G)$ with $\Delta+1$ colors and order $G$ by placing the $\Delta+1$ color classes as $\Delta+1$ disjoint intervals.
The interval chromatic number of the resulting ordering is then $\Delta+1$ and we obtain the following bound from Theorem~3.1 in~\cite{cfls14}.

\begin{corollary}
\label{cor-upperBoundAllOrderings}
For a positive integer $\Delta$, every graph $G$ with $n$ vertices and with maximum degree $\Delta$ satisfies $\minr(G) \leq O(n^{(\Delta+1)\lceil\log(\Delta+1)\rceil+1})$.

\end{corollary}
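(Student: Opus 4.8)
The plan is to derive this directly from Theorem~\ref{thm-intChromNum} by exhibiting a single ordering $\cG$ of $G$ whose interval chromatic number and degeneracy are both controlled by~$\Delta$. Since $G$ has maximum degree $\Delta$, it is in particular $\Delta$-degenerate, and degeneracy is a property of the underlying graph, so any ordering $\cG$ of $G$ is $\Delta$-degenerate. Hence the only real task is to choose the ordering so that the interval chromatic number $\chi(\cG)$ is at most $\Delta+1$; then Theorem~\ref{thm-intChromNum} gives
\[
  \OR(\cG) \le n^{32\Delta\log(\Delta+1)},
\]
which is polynomial in~$n$ with an exponent depending only on~$\Delta$, and absorbing constants yields the stated bound $O(n^{(\Delta+1)\lceil\log(\Delta+1)\rceil+1})$. (One should double-check which explicit exponent the authors want to record; I would be content to prove it with the exponent $32\Delta\log(\Delta+1)$ and then remark that this is $O(n^{(\Delta+1)\lceil\log(\Delta+1)\rceil+1})$ for all $\Delta\ge1$, tightening constants if needed.)

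The key step is therefore the following: every graph $G$ with maximum degree $\Delta$ has a proper vertex coloring with $\Delta+1$ colors (greedily, or by Brooks' theorem), say with color classes $V_1,\dots,V_{\Delta+1}$. Order the vertices of $G$ by placing all of $V_1$ first (in any internal order), then all of $V_2$, and so on up to $V_{\Delta+1}$. Call this ordered graph $\cG$. Each $V_i$ is then an interval of $\cG$, these intervals partition $V(\cG)$, and since the coloring was proper there is no edge inside any $V_i$. Thus $\chi(\cG)\le \Delta+1$.

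Plugging $d=\Delta$ and $\chi = \Delta+1$ into Theorem~\ref{thm-intChromNum} finishes the argument: $\minr(G) \le \OR(\cG) \le n^{32\Delta\log(\Delta+1)} = O\!\left(n^{(\Delta+1)\lceil\log(\Delta+1)\rceil+1}\right)$, where the last inequality holds because $32\Delta\log(\Delta+1) \le (\Delta+1)\lceil\log(\Delta+1)\rceil+1$ fails for small $\Delta$ — so in fact the right move is the reverse: the clean bound to cite is the one in Theorem~\ref{thm-intChromNum}, and the displayed polynomial exponent in the corollary should be read as the (weaker, cleaner) form the authors prefer to state; I would simply verify that $n^{32\Delta\log(\Delta+1)} = O(n^{(\Delta+1)\lceil\log(\Delta+1)\rceil+1})$ is what they intend or else restate the corollary with exponent $32\Delta\log(\Delta+1)$. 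The only genuine obstacle is this bookkeeping about the precise exponent; the mathematical content — proper coloring gives an ordering of interval chromatic number $\Delta+1$, then invoke Theorem~\ref{thm-intChromNum} — is entirely routine.
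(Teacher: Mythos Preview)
Your approach is exactly the paper's: properly $(\Delta+1)$-color $G$, lay the color classes down as consecutive intervals to get interval chromatic number $\Delta+1$, and invoke a bound from~\cite{cfls14}. The bookkeeping issue you flag is real and is resolved not by massaging constants but by citing a different result: the paper does not apply Theorem~\ref{thm-intChromNum} (which would give exponent $32\Delta\log(\Delta+1)$) but rather Theorem~3.1 of~\cite{cfls14}, a sharper statement that yields the exponent $(\Delta+1)\lceil\log(\Delta+1)\rceil+1$ directly. So your mathematical argument is correct and identical to the paper's; you just need to point to the sharper theorem in~\cite{cfls14} rather than to Theorem~\ref{thm-intChromNum}.
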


Note that the gap between the upper bounds from Corollary~\ref{cor-upperBoundAllOrderings} and the lower bounds from Theorem~\ref{thm-lowerBoundAllOrderings} is rather large.
It would be interesting to close it at least for $\Delta=3$.

For a positive integer $n$, the \emph{random $n$-permutation} is a permutation of the set $[n]$ chosen independently uniformly at random from the set of all $n!$ permutations of the set $[n]$.

For a positive integer $n$ and the random $n$-permutation $\pi$, the \emph{random ordered $n$-matching $\cM(\pi)$} is the ordered matching with the vertex set $[2n]$ and with edges $\{i,n+\pi(i)\}$ for every $i \in [n]$.
Note that the interval chromatic number of every random ordered $n$-matching is two.

The random ordered $n$-matching satisfies an event $A$ \emph{asymptotically almost surely} if the probability that $A$ holds tends to 1 as $n$ goes to infinity.

As our last main result, we show an improved lower bound on ordered Ramsey numbers of ordered matchings with interval chromatic number two, which is a step towards answering Problem~6.3 in~\cite{cfls14}.
We improve the bound of Conlon~et~al.~\cite{cfls14} from Theorem~\ref{thm-lowerBoundMatch} by eliminating the $(\log{\log{n}})$-factor in the denominator.

\begin{theorem}
\label{thm-improvedLowerBoundMatch}
There is a constant $C>0$ such that the random ordered $n$-matching $\cM(\pi)$ asymptotically almost surely satisfies
\[\OR(\cM(\pi)) \geq C\left(\frac{n}{\log{n}}\right)^2.\]
\end{theorem}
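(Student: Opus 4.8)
The plan is to show that most random ordered $n$-matchings $\cM(\pi)$ cannot be embedded in a monochromatic copy inside a $2$-colored $\cK_N$ with $N = c(n/\log n)^2$, by the standard counting (first-moment) dichotomy used by Conlon et al., but with a sharper count of ordered matchings than the one implicit in Theorem~\ref{thm-lowerBoundMatch}. First I would fix a coloring of $\cK_N$, i.e.\ a red/blue coloring of the edges of $\cK_N$; one of the two color classes, say the red graph $H$, has at least $\binom{N}{2}/2 \ge N^2/5$ edges. It suffices to bound the number of ordered $n$-matchings with interval chromatic number two that embed (as an ordered subgraph) into $H$, and to show that for $N$ of the stated size this number is $o(n!)$ — then a union bound over the (finitely many, namely $2^{\binom{N}{2}}$) colorings, which is still small compared to $n!$ once one checks the exponents, shows that asymptotically almost surely $\cM(\pi)$ embeds into neither color class.

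The key step is the embedding count. An ordered $n$-matching $\cM(\pi)$ with interval chromatic number two, when embedded into $\cK_N$, is determined by a choice of a left interval $L$ and right interval $R$ of $[N]$ (there are fewer than $N$ ways to split), then an increasing injection of the $n$ left vertices into $L$, an increasing injection of the $n$ right vertices into $R$, and the permutation $\pi$ itself. But if the embedded copy must lie inside $H$, then once the left image $\{a_1 < \dots < a_n\} \subseteq L$ and right image $\{b_1 < \dots < b_n\} \subseteq R$ are chosen, the permutation $\pi$ is constrained to use only pairs $(a_i, b_{\pi(i)})$ that are edges of $H$; the number of such $\pi$ is a permanent of a $0/1$ matrix with at most $N^2$ ones, which is at most $n!$ trivially but can be bounded better. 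Here I would invoke the Bregman–Minc bound on permanents: a bipartite graph on $n+n$ vertices with $m$ edges has at most $\prod (d_i!)^{1/d_i}$ perfect matchings, which is maximized (for fixed total degree) when degrees are balanced, giving roughly $(m/n)^n \cdot e^{o(n)}$ — more precisely, if the average degree is $\bar d = m/n$ then the number of perfect matchings is at most $(\bar d/e)^{n}e^{o(n)}$-ish, but since $m \le N^2$ and we only care about sub-matrices, the relevant quantity is that the perfect matchings of an $n\times n$ submatrix of the adjacency matrix of $H$ number at most $\big(\tfrac{|E(H)|}{?}\big)$. Summing over the $\binom{|L|}{n}\binom{|R|}{n} \le 4^N$ choices of left/right images and the $N$ choices of split, the total embedding count is at most $N\cdot 4^N \cdot (\text{Bregman bound})$; choosing $N = c(n/\log n)^2$ makes $4^N = 2^{O(n^2/\log^2 n)}$ while $n! = 2^{\Theta(n\log n)}$, so the Bregman factor must be large enough to beat this — which is exactly where the squared logarithm enters and where the improvement over Conlon et al.\ comes from.

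Concretely, the heart of the argument is a careful accounting of logarithms: the number of labeled ordered matchings we must rule out is $n! = 2^{(1+o(1))n\log n}$, the number of colorings is $2^{\binom N 2} = 2^{(1/2+o(1))N^2}$ (this is too large for a naive union bound, so instead one fixes the coloring first and counts embeddings, avoiding the union over colorings of the full $n!$ — rather, one shows the expected number of colorings for which a fixed $\cM(\pi)$ embeds is still controlled, or more simply one counts, for each coloring, the fraction of permutations $\pi$ that embed and shows it is $o(1)$ on average). I would set it up as: $\Pr_\pi[\cM(\pi) \hookrightarrow \cK_N \text{ monochromatically}] \le \tfrac{1}{n!}\sum_{\text{colorings? no}}$ — the correct formulation is to fix the worst coloring is not possible, so instead I fix an arbitrary coloring, count monochromatic ordered-matching copies, get a bound $f(N)$ independent of the coloring, and conclude $\Pr_\pi[\cM(\pi)\hookrightarrow] \le f(N)/n!$; we need $f(N) = o(n!)$. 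The count $f(N) \le 2 \cdot N \cdot \binom{N}{n}^2 \cdot P$, where $P$ bounds perfect matchings in an $n\times n$ submatrix of one color class; by Bregman, $P \le (N^2/n \cdot e^{-1})^{n}$ roughly, giving $f(N) \le 2N \cdot (eN/n)^{2n} \cdot (N^2/(en))^n = 2N (e^{3} N^{4}/n^{3})^{n}$-type expression. Setting $N = \delta (n/\log n)^2$ yields $f(N) = 2^{(8+o(1))n\log(n/\log n) + O(n)}$ — and here I realize the exponents do not immediately close, so the real content is to be more careful: one should \emph{not} choose the left/right images freely among all $\binom{N}{n}$ subsets but exploit that a random $\pi$'s matching is dense and balanced only with small probability.

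The hard part, and where I expect to spend the most effort, is exactly this tension: the crude bound $\binom{N}{n}^2 \cdot (\text{permanent})$ overcounts, and to get $N$ as large as $(n/\log n)^2$ one must combine the Bregman bound with the observation that the red (or blue) graph $H$ on $N$ vertices with interval bipartition into parts of sizes summing to $N$ has at most $N^2/4$ edges between the parts, so its densest $n\times n$ bipartite subgraph has average degree at most $N/4$, hence by Bregman at most $(N/(4e))^n e^{o(n)}$ perfect matchings per choice of vertex subsets — then $f(N) \lesssim N \binom{N}{n}^2 (N/(4e))^n \le N (eN/n)^{2n}(N/(4e))^n = N\, (e^{2}N^{3}/(4 n^{2}))^{n}$, and this is $o(n!)$ precisely when $N^{3}/n^{2} = o(n)$, i.e.\ $N = o(n)$ — which is useless. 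So the genuinely correct approach must be different: one should \emph{not} range over all colorings and all image subsets, but instead follow Conlon et al.'s actual scheme of bounding, for a fixed coloring, the number of "monochromatic patterns" an $n\times n$ grid can realize, and show this is $2^{o(n\log n)}$ when $N = (n/\log n)^2$, using that each of the $n$ left vertices sees only one color-$c$ neighbor profile that is an increasing sequence. I will present the argument in the form: define for a coloring $\chi$ and a color $c$ the quantity $M_c(\chi)$ counting pairs of increasing $n$-tuples $(a_i),(b_j)$ together with a perfect matching in color $c$ between them; show $M_c(\chi) \le N \binom{N}{n}^2 \cdot \text{perm}$ and then — the real trick — bound $\text{perm}$ using that the color-$c$ bipartite adjacency between the tuples, restricted appropriately, has row sums controlled on average, invoking Bregman in the sharp form $\text{perm}(A) \le \prod_i (r_i!)^{1/r_i} \le (\bar r/e)^n \cdot e^{O(n/\log n)}$ when the $r_i$ are not too spread out, which combined with the entropy bound $\binom{N}{n} \le (eN/n)^n$ and the choice $N=(n/\log n)^2$ is meant to land $M_c(\chi) = 2^{o(n\log n)} = o(\sqrt{n!})$, completing the proof by a final union bound over the two colors.
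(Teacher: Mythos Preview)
Your approach has a genuine and unfixable gap. You try to bound, for an arbitrary coloring of $\cK_N$, the total number of ordered $n$-matchings that embed monochromatically, and compare this to $n!$. But when $N=c(n/\log n)^2$, already the number of ways to choose the $2n$ image vertices satisfies
\[
\binom{N}{2n}\ge\Bigl(\frac{N}{2n}\Bigr)^{2n}=\Bigl(\frac{cn}{2\log^2 n}\Bigr)^{2n}=2^{2n\log n-O(n\log\log n)},
\]
which is larger than $n!=2^{n\log n-O(n)}$ by a factor of roughly $2^{n\log n}$. No permanent bound can rescue this: even if every choice of image vertices contributed at most one perfect matching, your count would still exceed $n!$ by a factor that is super-exponential in $n$. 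You noticed the arithmetic failing twice in your write-up (``the exponents do not immediately close'', ``which is useless''); that failure is intrinsic, not a matter of being more careful with Bregman--Minc. A pure embedding count can only give $N=o(n)$, which is worthless.

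The paper's proof takes a completely different route, following Conlon et al.\ in spirit. One does \emph{not} range over colorings; instead one \emph{constructs} a bad coloring. Partition $[st]$ into $t$ intervals of size $s$ with $s,t\approx n/\log n$, take a uniformly random red/blue coloring $c$ of the complete graph with loops on $[t]$, and color an edge of $\cK_{st}$ by the color of the pair of intervals containing its endpoints. One then shows that, with positive probability over $c$, no monochromatic copy of $\cM(\pi)$ exists. The point is that any embedded copy of $\cM(\pi)$ induces a partition of its two colour classes into at most $t$ intervals each, and one proves (this is where the work is, via two lemmas handling separately the case where many of these intervals are large and the case where they are small) that asymptotically almost surely $\cM(\pi)$ forces enough distinct pairs $(I_i,J_j)$ with $e_{\cM(\pi)}(I_i,J_j)>0$ that the corresponding edges of $[t]$ are all one colour with probability $<1/2$. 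The improvement over Conlon et al.\ comes from a sharper analysis of these two cases, not from any permanent or entropy estimate.
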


This result has an immediate corollary to a certain Ramsey-type problem for $\{0,1\}$-matrices.
A $\{0,1\}$-matrix $B$ is \emph{contained} in a $\{0,1\}$-matrix $A$ if $B$ can be obtained from $A$ by deleting some rows and columns and by replacing some 1-entries by 0.
For a $\{0,1\}$-matrix $A$, we let $\overline{A}$ denote the matrix obtained from~$A$ by replacing 1-entries by 0 and 0-entries by 1.

Theorem~\ref{thm-improvedLowerBoundMatch} implies that there is an $n \times n$ permutation matrix $P$ and  a matrix $A \in \{0,1\}^{N \times N}$ with $N \colonequals \Omega(n^2/\log^2{n})$ such that neither $A$ nor $\overline{A}$ contains $P$.
Using a similar argument as for ordered matchings~\cite{cfls14}, it is possible to show that every $n \times n$ permutation matrix is contained in $A$ or $\overline{A}$ for every $n^2 \times n^2$ $\{0,1\}$-matrix $A$.

\section{Large ordered Ramsey numbers for all orderings of regular graphs}
\label{sec:proofLowerBoundAllOrderings}

Here we prove Theorem~\ref{thm-lowerBoundAllOrderings} by showing that, asymptotically almost surely, random $\rho$-regular graphs have superlinear ordered Ramsey numbers for all orderings.
The main ingredient of the proof is the following technical result.

\begin{theorem}
\label{thm-lowerBoundAllOrderingsAlmostReg}
Let $\{\varepsilon_n\}_{n \geq 1}$, $\{\zeta_n\}_{n \geq 1}$, and
$\{\rho_n\}_{n \geq 1}$ be sequences of real numbers satisfying these constraints:
\begin{itemize}
 \item $0<\varepsilon_n$ and $0<\zeta_n = o(1)$ for every $n$ large enough,
 \item there is a constant $C$ such that $1\leq\rho_n\leq C$ for all $n$,
 \item $\lim_{n\to\infty} \left((\frac{1}{2}-\varepsilon_n)\rho_n-\zeta_n-1\right)n\log n =\infty$.
\end{itemize}
Then asymptotically almost surely,
\[\minr(G(\rho_n,n)) \geq \frac{\zeta_n n^{1+\varepsilon_n}}{2\log{(1/\zeta_n})}.\]
\end{theorem}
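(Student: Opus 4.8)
The plan is to argue via explicit colorings and to handle all orderings at once by a union bound; this is exactly what the $n\log n$ factor in the third hypothesis is for. First I would pass from the random graph to a deterministic‑looking event: using the configuration model and a first‑moment estimate, one shows that for a suitable threshold $w_n$, asymptotically almost surely every vertex subset $S$ of $G(\rho_n,n)$ with $|S|\le w_n$ spans at most, say, $|S|$ edges, while $|E(G(\rho_n,n))|=\lceil\rho_n n\rceil/2$, which by the third hypothesis exceeds $(1+\zeta_n+o(1))n$. (It is convenient to keep part of the randomness around, because the coloring below must depend on the ordering and one ultimately union‑bounds over all $n!$ orderings.)

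The core step is: given any ordering $\cG=(G,\prec)$ and $N=\bigl\lfloor \zeta_n n^{1+\varepsilon_n}/(2\log(1/\zeta_n))\bigr\rfloor$, produce a $2$‑coloring of $\cK_N$ with no monochromatic $\cG$. A single block partition of $[N]$ does not suffice: if $\cG$ orders $G$ by a proper coloring, its interval chromatic number is only $\chi(G)=O(1)$ and the block coloring would force $N$ linear in $n$; so one uses a hierarchy of about $\ell\approx\log(1/\zeta_n)$ nested block partitions and colors a pair by the parity of the coarsest scale that separates it. A monochromatic copy of $\cG$ in such a coloring translates into a depth‑$\ell$ nested decomposition of $V(\cG)$ respecting $\prec$ in which, at every other level, the current block must split into two parts with no $G$‑edge between them; equivalently, all $\sim\rho_n n/2$ edges of $G$ must be ``cut'' at the remaining $\sim\ell/2$ levels. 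Since one split of a window can separate only a bounded fraction of the edges it meets without producing a small dense subgraph, the no‑small‑dense‑subgraph property prevents the decomposition from reaching singletons once $N$ is below the stated threshold.

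For the probabilistic part I would fix an ordering and bound the probability that $G(\rho_n,n)$ admits a monochromatic copy of $\cG$ in this coloring by (number of possible decomposition skeletons) $\times$ (probability, computed in the configuration model, that a given skeleton's edgeless‑cut constraints all hold); the latter is roughly $\exp\bigl(-(\tfrac12-\varepsilon_n)\rho_n\,n\log n\,(1+o(1))\bigr)$, and the former is subexponential in $n\log n$ once the scales and block ratios are chosen to maximize $N$ (this optimization is where the factor $\zeta_n/(2\log(1/\zeta_n))$ appears). Summing over all $n!=e^{(1+o(1))n\log n}$ orderings, the hypothesis $\bigl((\tfrac12-\varepsilon_n)\rho_n-\zeta_n-1\bigr)n\log n\to\infty$ forces the total to tend to $0$, so asymptotically almost surely every ordering of $G(\rho_n,n)$ has ordered Ramsey number at least $N$. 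The main obstacles are the structural decomposition lemma for the hierarchical coloring — getting the right trade‑off between the number of scales, the rate at which edges can be cut, and the no‑dense‑subgraph bound — and making the configuration‑model computation sharp enough to realize the hypothesis with only the $\zeta_n$ slack rather than a constant.
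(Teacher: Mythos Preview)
Your proposal diverges from the paper's argument in a fundamental way, and the divergence stems from a misdiagnosis. You assert that ``a single block partition of $[N]$ does not suffice'' and therefore build a hierarchy of $\ell\approx\log(1/\zeta_n)$ nested scales with a deterministic parity coloring. But the paper shows that a single block partition \emph{does} suffice, provided the coloring on block pairs is \emph{random} rather than deterministic. Concretely, the paper sets $s=n^{\varepsilon_n}$, $t=\zeta_n n/(2\log(1/\zeta_n))$, partitions $[st]$ into $t$ intervals of length $s$, and colors each pair of intervals (including loops) independently red or blue with probability $1/2$.

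The key deterministic property the paper extracts from $G(\rho_n,n)$ is not ``no small dense subgraphs'' but rather Lemma~\ref{lem-setPartitionManyEdges}: asymptotically almost surely, \emph{every} partition of $V(G)$ into $t$ parts of size at most $s$ has more than $M=\zeta_n n$ pairs of parts with an edge between them. This is verified by a first-moment count against the Bender--Canfield formula for the number of $\rho_n$-regular graphs; the third hypothesis of the theorem is exactly what makes the resulting bound $\delta=2^{(\rho_n(\varepsilon_n-1/2)+1+\zeta_n)n\log n}$ tend to zero. Crucially, this property is \emph{ordering-free}: once the graph has it, for any ordering $\cG$ a monochromatic copy in the random block coloring induces a partition of $V(\cG)$ into $t$ intervals of size $\le s$, hence forces more than $M$ block-pairs to share a color. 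The number of such interval partitions of $\cG$ is at most $\binom{n+t-1}{t-1}<2^{2t\log(1/\zeta_n)}=2^{M}$, so the union bound over partitions (not over $n!$ orderings) gives probability less than $1/2$ for each color, and a good coloring exists.

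Your sketch, by contrast, leaves the central structural step unproven: the claim that in the parity coloring a monochromatic copy forces, at every other level, an edgeless bipartition, and that the no-small-dense-subgraph property then blocks the decomposition, is asserted but not argued. As stated it is not correct---in a parity coloring, a monochromatic copy only forces each edge to be first separated at a level of a fixed parity, which is weaker than edgeless splits at alternate levels. Even granting some version of it, the quantitative trade-off you allude to (``this optimization is where the factor $\zeta_n/(2\log(1/\zeta_n))$ appears'') is not carried out. The paper's route avoids all of this: one partition level, one random coloring, one counting lemma, and a union bound over $\binom{n+t-1}{t-1}$ interval partitions rather than $n!$ orderings.
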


We first show how Theorem~\ref{thm-lowerBoundAllOrderings} follows from Theorem~\ref{thm-lowerBoundAllOrderingsAlmostReg}.
\ref{item1-thmLowerBound}
For every $n \geq 2$, let $\rho_n \colonequals \rho$, $\varepsilon_n \colonequals 1/2-1/\rho-1/\log{n}$, and $\zeta_n\colonequals 1/\log{n}$.
Since $\rho >2$, we have $0<\varepsilon_n$ for $n$ large enough, and the remaining assumptions of Theorem~\ref{thm-lowerBoundAllOrderingsAlmostReg} are satisfied as well.
Part~\ref{item1-thmLowerBound} then follows directly from Theorem~\ref{thm-lowerBoundAllOrderingsAlmostReg}.

\ref{item2-thmLowerBound}
It suffices to set $\rho_n \colonequals 2+\frac{9\log{\log{n}}}{\log{n}}$, $\varepsilon_n \colonequals \frac{2\log{\log{n}}}{\log{n}}$, and $\zeta_n \colonequals \frac{1}{\log{n}}$ and apply Theorem~\ref{thm-lowerBoundAllOrderingsAlmostReg}.
The assumptions of Theorem~\ref{thm-lowerBoundAllOrderingsAlmostReg} are satisfied, since
\begin{align*}
 (1/2-\varepsilon_n)\rho_n-\zeta_n-1&= \frac{9\log\log n}{2\log n}-\frac{4\log\log n}{\log n}-18\left(\frac{\log\log n}{\log n}\right)^2-\frac{1}{\log n}\\
&=\frac{\log\log n}{2\log n} - O\left(\frac{1}{\log n}\right).
\end{align*}
Part~\ref{item2-thmLowerBound} follows.

In the rest of the section, we prove Theorem~\ref{thm-lowerBoundAllOrderingsAlmostReg}.
If $A$ and $B$ are subsets of the vertex set of a graph $G$, then we use $e_G(A,B)$ to denote the number of edges that have one vertex in $A$ and one vertex in $B$.
In particular, $e_G(A,A)$ is the number of edges of the subgraph $G[A]$ of $G$ induced by $A$.
A similar notation, $e_{\cG}(A,B) = e_G(A,B)$, is used for an ordered graph $\cG = (G,\prec)$ and two subsets $A$ and $B$ of its vertices.

\begin{lemma}
\label{lem-setPartitionManyEdges}
Let $M,n,s,t$ be positive integers with $M \leq \binom{t}{2}+t$ and let $\rho>0$ and $\delta > 0$ be real numbers such that $\lceil \rho n\rceil$ is even and
\begin{equation}
\label{eq1}
t^n \cdot \binom{t^2}{M} \cdot \binom{s^2M}{\lceil \rho n \rceil/2}<\delta D
\end{equation}
where $D$ is the number of $\rho$-regular graphs on $[n]$.
Then, with probability at least $1-\delta$, the graph $G(\rho,n)$ satisfies the following statement: for every partition of $V(G(\rho,n))$ into sets $X_1,\ldots,X_t$, each of size at most $s$, there are more than $M$ pairs $(X_i,X_j)$ with $1\leq i \leq j \leq t$ and $e_{G(\rho,n)}(X_i,X_j)>0$. 
\end{lemma}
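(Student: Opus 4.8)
The plan is to use a first-moment / union-bound argument over all "bad" configurations. Call a $\rho$-regular graph $H$ on $[n]$ \emph{bad} if there exists a partition of $[n]$ into $t$ parts $X_1,\dots,X_t$, each of size at most $s$, such that at most $M$ of the pairs $(X_i,X_j)$ with $1 \le i \le j \le t$ satisfy $e_H(X_i,X_j)>0$. I want to show that the number of bad graphs is less than $\delta D$, so that $G(\rho,n)$ is bad with probability less than $\delta$.

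First I would encode a bad graph by a triple of choices. Choice one: the partition, which I encode by a function $[n] \to [t]$ assigning each vertex its part; there are $t^n$ such functions (this overcounts, but that is fine for an upper bound). Choice two: the set $\mathcal{P}$ of "nonempty" pairs $(X_i,X_j)$ used by the partition, i.e.\ those with $e_H(X_i,X_j)>0$; since badness says $|\mathcal{P}| \le M$ and the total number of pairs is at most $\binom{t}{2}+t$, and since $M \le \binom{t}{2}+t$, the number of ways to choose $\mathcal{P}$ is at most $\binom{\binom{t}{2}+t}{M} \le \binom{t^2}{M}$. Choice three: the actual edges of $H$. Every edge of $H$ lies inside one of the pairs in $\mathcal{P}$, and each such pair $(X_i,X_j)$ contributes at most $|X_i|\cdot|X_j| \le s^2$ possible edge slots (for $i=j$ even fewer, but $s^2$ is a safe bound); hence all edges of $H$ lie within a fixed set of at most $s^2 M$ potential edges, and since $H$ has exactly $\lceil \rho n\rceil/2$ edges, the number of ways to choose them is at most $\binom{s^2 M}{\lceil \rho n\rceil/2}$. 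Multiplying, the number of bad graphs is at most $t^n \binom{t^2}{M}\binom{s^2M}{\lceil\rho n\rceil/2}$, which by hypothesis \eqref{eq1} is less than $\delta D$. Dividing by $D$ gives the claimed probability bound, since $G(\rho,n)$ is uniform over the $D$ many $\rho$-regular graphs on $[n]$.

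I do not expect a serious obstacle here; the argument is a clean union bound and every step is an explicit crude overcount matched to a factor in \eqref{eq1}. The only points requiring a little care are: verifying that every edge of a bad graph really is captured by the chosen set $\mathcal{P}$ of pairs (it is, by the definition of $e_H(X_i,X_j)>0$ versus $=0$), and checking that the three binomial/power factors in the counting line up exactly with the three factors $t^n$, $\binom{t^2}{M}$, $\binom{s^2 M}{\lceil \rho n\rceil/2}$ in \eqref{eq1}, using $\binom{t}{2}+t \le t^2$ and $|X_i||X_j|\le s^2$. One should also note that the condition $M \le \binom{t}{2}+t$ is exactly what makes $\binom{\binom{t}{2}+t}{M}$ well-defined and bounded by $\binom{t^2}{M}$. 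With these remarks in place the proof is complete.
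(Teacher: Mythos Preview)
Your proposal is correct and follows essentially the same union-bound argument as the paper: overcount partitions by $t^n$, bound the choice of ``active'' pairs by $\binom{t^2}{M}$, and bound the edge placements by $\binom{s^2M}{\lceil\rho n\rceil/2}$, then divide by $D$. One small imprecision: as written, $\mathcal{P}$ is the set of \emph{exactly} the nonempty pairs, which has size at most $M$, and the number of subsets of size \emph{at most} $M$ is not in general bounded by $\binom{\binom{t}{2}+t}{M}$; the clean fix (which is what the paper implicitly does) is to let $\mathcal{P}$ be any size-$M$ superset of the nonempty pairs, which exists precisely because $M\le\binom{t}{2}+t$, and then the count $\binom{\binom{t}{2}+t}{M}\le\binom{t^2}{M}$ is exact.
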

\begin{proof}
Let $X_1,\ldots,X_t$ be a partition of the set $[n]$ such that $X_i$ contains at most $s$ elements for every $1 \leq i \leq t$.
Let $S$ be a set of $M$ pairs $(X_i,X_j)$ for some $1 \leq i \leq j \leq t$.
We let $h$ denote the number of graphs $H$ on $[n]$ with $\lceil \rho n \rceil/2$ edges such that $e_H(X_i,X_j)=0$ for every pair $(X_i,X_j)$ with $1\leq i \leq j \leq t$ that is not contained in $S$.
Since the size of every $X_i$ is at most $s$, we have 
\[h \leq \binom{s^2M}{\lceil \rho n \rceil/2}.\]
Since every $\rho$-regular graph on $n$ vertices contains $\lceil \rho n \rceil/2$ edges, the probability that there are no edges of $G(\rho,n)$ between $X_i$ and $X_j$ for every $(X_i,X_j) \notin S$ is at most $h/D$.

The number of partitions $X_1,\ldots,X_t$ of $[n]$ is at most $t^n$.
The number of choices for the set $S$ is at most $\binom{\binom{t}{2}+t}{M}\leq\binom{t^2}{M}$.
Altogether, the expected number of partitions of $[n]$ into sets $X_1,\ldots,X_t$ of size at most $s$ with at most $M$ pairs $(X_i,X_j)$, $1 \leq i \leq j \leq t$, that satisfy $e_{G(\rho,n)}(X_i,X_j)>0$ is at most 
\[\frac{t^n}{D} \cdot \binom{t^2}{M} \cdot \binom{s^2M}{\lceil \rho n \rceil/2}.\]
By our assumption this term is bounded from above by $\delta$.
By Markov's inequality, the probability that $G(\rho,n)$ satisfies the statement from the lemma is at least $1-\delta$.
\end{proof}

Bender and Canfield~\cite{benCan78} and independently Wormald~\cite{wor78} showed that the number $D$ of $d$-regular graphs on $n$ vertices satisfies
\[D=(1+o(1))\frac{(dn)!}{2^{\frac{dn}{2}}\left(\frac{dn}{2}\right)!(d!)^n}\exp\left(\frac{1-d^2}{4}\right)\]
for a fixed integer $d$ and a sufficiently large $n$ such that $dn$ is even.
Bender and Canfield~\cite{benCan78} also proved an asymptotic formula for the number of labeled graphs with a given degree sequence.
In the case of $\rho$-regular graphs, their result gives the following estimate.

\begin{corollary}[{\cite[Theorems~1 and~2]{benCan78}}]
\label{cor-numberAlmostRegular}
For a real number $\rho \geq 2$ with $\lceil \rho n \rceil$ even, the number of $\rho$-regular graphs with the vertex set $[n]$, for $n$ sufficiently large, is at least
\[\frac{\lceil \rho n\rceil!}{2^{{\lceil \rho n\rceil}/{2}}\left(\frac{\lceil \rho n\rceil}{2}\right)!(d!)^{n}(d+1)^{\lceil \gamma n \rceil}e^{d^2}},\]
where $d\colonequals\lfloor \rho\rfloor$ and $\gamma\colonequals \rho -d \in[0,1)$. 
\end{corollary}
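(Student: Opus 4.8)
The plan is to read the bound directly off the Bender--Canfield asymptotic formula for the number of labeled graphs with a prescribed degree sequence, and to check that the deliberately crude factor $e^{d^2}$ in the denominator of the statement is large enough to absorb both the $(1+o(1))$ error term in that formula and its exponential correction factor.

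First I would isolate the $\rho$-regular degree sequence. Put $d\colonequals\lfloor\rho\rfloor$, $\gamma\colonequals\rho-d\in[0,1)$ and $m\colonequals\lceil\rho n\rceil/2$; comparing degree sums shows that in any $\rho$-regular graph on $[n]$ exactly $\lceil\gamma n\rceil$ vertices have degree $d+1$ and the rest have degree $d$. Fix one such degree sequence $\mathbf{d}$. Since the graphs realizing $\mathbf{d}$ form a subfamily of all $\rho$-regular graphs on $[n]$, we have $D\ge N(\mathbf{d})$, where $N(\mathbf{d})$ is the number of labeled graphs on $[n]$ with degree sequence $\mathbf{d}$. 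A routine computation gives
\[
 \prod_{i=1}^{n}d_i! = (d!)^{n-\lceil\gamma n\rceil}\bigl((d+1)!\bigr)^{\lceil\gamma n\rceil} = (d!)^{n}(d+1)^{\lceil\gamma n\rceil},
\]
and $\sum_i d_i=2m=\lceil\rho n\rceil$, so the leading factor $\frac{(2m)!}{2^{m}m!\prod_i d_i!}$ in the Bender--Canfield estimate is exactly $\frac{\lceil\rho n\rceil!}{2^{\lceil\rho n\rceil/2}(\lceil\rho n\rceil/2)!(d!)^{n}(d+1)^{\lceil\gamma n\rceil}}$, i.e. the asserted expression with $e^{d^2}$ deleted.

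It then remains to bound the correction factor from below. With $\lambda\colonequals\frac{1}{2m}\sum_i d_i(d_i-1)$, the Bender--Canfield estimate~\cite[Theorems~1 and~2]{benCan78} reads $N(\mathbf{d})=(1+o(1))\frac{(2m)!}{2^{m}m!\prod_i d_i!}\exp\bigl(-\tfrac{\lambda}{2}-\tfrac{\lambda^2}{4}\bigr)$; for the constant sequence $\mathbf{d}\equiv d$ this specializes to the factor $\exp\bigl(\tfrac{1-d^2}{4}\bigr)$ appearing in the $d$-regular formula recalled just before the corollary. Since every $d_i\in\{d,d+1\}$, the number $\lambda$ is a weighted average of the values $d_i-1\in\{d-1,d\}$, hence $\lambda\le d$, and so $\tfrac{\lambda}{2}+\tfrac{\lambda^2}{4}\le\tfrac{d}{2}+\tfrac{d^2}{4}\le d^2$ as $d\ge2$. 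Thus $\exp\bigl(-\tfrac{\lambda}{2}-\tfrac{\lambda^2}{4}\bigr)\ge e^{-d^2}e^{3d^2/4-d/2}\ge e^{2}e^{-d^2}$, and since $(1+o(1))e^{2}\ge1$ for $n$ large we conclude $D\ge N(\mathbf{d})\ge\frac{\lceil\rho n\rceil!}{2^{\lceil\rho n\rceil/2}(\lceil\rho n\rceil/2)!(d!)^{n}(d+1)^{\lceil\gamma n\rceil}e^{d^2}}$.

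The one point genuinely requiring care is the uniformity of the $o(1)$: in the applications $\rho=\rho_n$ varies with $n$, so one must invoke the Bender--Canfield estimate in the form in which the error term is uniform over all degree sequences with the maximum degree bounded by a fixed constant --- which is exactly our situation, since $\lceil\rho_n\rceil$ is bounded. Everything else is bookkeeping; in particular the crude inequality $D\ge N(\mathbf{d})$ already suffices (one need not even count the $\binom{n}{\lceil\gamma n\rceil}$ choices of the vertices of degree $d+1$), because the gap between the true exponent $-\tfrac{\lambda}{2}-\tfrac{\lambda^2}{4}$ and the advertised $-d^2$ is already comfortable.
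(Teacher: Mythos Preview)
Your proposal is correct and follows exactly the route the paper intends: the paper states the corollary without proof, attributing it directly to Bender--Canfield's asymptotic formula for labeled graphs with a given degree sequence, and you have simply (and correctly) filled in the bookkeeping --- identifying the degree sequence, computing $\prod_i d_i!$, and checking that $\exp(-\lambda/2-\lambda^2/4)\ge e^{-d^2}$ for $d\ge 2$. Your remark on the needed uniformity of the $o(1)$ term in $\rho_n$ is apt and is precisely why the paper's later hypothesis $\rho_n\le C$ matters.
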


\begin{proof}[Proof of Theorem~\ref{thm-lowerBoundAllOrderingsAlmostReg}]
Let $\varepsilon_n$, $\zeta_n$, and $\rho_n$ be sequences satisfying the assumptions of the theorem.
Note that these assumptions imply $\varepsilon_n<1/2$ for a sufficiently large $n$.
We may also assume $\zeta_n = \omega(1/n^{\epsilon_n})$, as otherwise the statement is trivial.
We set $m \colonequals \lceil \rho_n n \rceil$.
That is, $m$ is the sum of degrees of every $\rho_n$-regular graph on $n$ vertices.

We set $M \colonequals \zeta_n n$, $s \colonequals n^{\varepsilon_n}$, $t \colonequals \frac{\zeta_n n}{2\log{(1/\zeta_n)}}$, and $\delta \colonequals 2^{(\rho_n(\varepsilon_n-1/2)+1+\zeta_n)n\log{n}}$.
Note that by our assumptions on $\varepsilon_n$, $\zeta_n$, and $\rho_n$, the value of $\delta$ tends to zero as $n$ goes to infinity.
We show that the parameters $\delta, M,n,s,t$ satisfy~\eqref{eq1}.

By Corollary~\ref{cor-numberAlmostRegular}, the number $D$ of $\rho_n$-regular graphs on $n$ vertices is asymptotically at least\[\frac{m!}{2^{\frac{m}{2}}\left(\frac{m}{2}\right)!(d_n!)^n(d_n+1)^{\lceil \gamma_n n \rceil}e^{d_n^2}},\]
where $d_n\colonequals\lfloor \rho_n\rfloor$ and $\gamma_n\colonequals \rho_n-d_n$. 

Recalling that the sequence $d_n$ is bounded, and using the estimate $(k/e)^k\leq k! \leq k^k$ for a positive integer $k$, we obtain that $D$ is at least
\[\frac{\left(\frac{m}{e}\right)^{m}}{2^{m/2}\left(\frac{m}{2}\right)^{m/2}d_n^{d_nn}(d_n+1)^{\lceil \gamma_n n \rceil}e^{d_n^2}} 
> \frac{m^{m/2}}{2^{O(n)}}.\]

By the choice of $M,s,t$ and by $D > \frac{m^{m/2}}{2^{O(n)}}$, the left side of~\eqref{eq1} divided by $D$ is at most
\[\frac{2^{O(n)}}{m^{m/2}}\left(\frac{\zeta_n n}{2\log{(1/\zeta_n)}}\right)^n\cdot \binom{\left(\frac{\zeta_n n}{2\log{(1/\zeta_n)}}\right)^2}{\zeta_n n} \cdot \binom{\zeta_n n^{2\varepsilon_n+1}}{m/2}\]
for $n$ sufficiently large.
Applying the estimates $\zeta_n =o(1)$ and $\binom{a}{b} \leq \left(\frac{ea}{b}\right)^b$, we bound this term from above by
\[\frac{1}{m^{m/2}}\cdot n^n\cdot \left(\frac{e \zeta_n n}{4\log^2{(1/\zeta_n)}}\right)^{\zeta_n n}\cdot\left(\frac{2e \zeta_n n^{2\varepsilon_n+1}}{m}\right)^{m/2}\]
for $n$ sufficiently large.
Using elementary calculations, we see that for $n$ large enough, this is less than
\begin{align*}
n^{-{m}/{2} + n+\zeta_n n+(2\varepsilon_n+1){m}/{2} -{m}/{2}}
&\leq2^{( -\rho_n + 1 +\zeta_n +(\varepsilon_n+1/2)\rho_n)n\log n}\\
&=2^{(\rho_n(\varepsilon_n-1/2)+1+\zeta_n)n\log{n}}\\
&=\delta.
\end{align*}
The inequality follows from $m=\lceil \rho_n n\rceil$ and $\varepsilon_n < 1/2$.
We also have $M \leq \binom{t}{2}+t$ due to $\zeta_n = \omega(1/n^{\epsilon_n})$ and $\epsilon_n<1/2$.
That is, the assumptions of Lemma~\ref{lem-setPartitionManyEdges} are satisfied.
Since $\delta$ tends to zero as $n$ goes to infinity, the statement of the lemma holds asymptotically almost surely for this choice of $\delta$, $M$, $s$, $t$.

Let $\cR$ be the ordered complete graph with loops and with the vertex set $[t]$.
Let $\chi$ be a coloring of $\cR$ that assigns either a red or a blue color to every edge of~$\cR$ independently at random with probability $1/2$.
Let $(I_1,\ldots,I_t)$ be a partition of the vertex set of $\cK_{st}$ into an  order-obeying collection intervals, each of size $s$.
We define the coloring $\chi'$ of $\cK_{st}$ such that the color $\chi'(e)$ of an edge $e$ of $\cK_{st}$ is $\chi(\{i,j\})$ if one vertex of $e$ lies in $I_i$ and the other one in $I_j$.
Note that, since $\cR$ contains loops, every edge of $\cK_{st}$ receives some color via $\chi'$.

By Lemma~\ref{lem-setPartitionManyEdges}, asymptotically almost surely, there are more than $M$ pairs $(X_i,X_j)$ with $1 \leq i \leq j \leq t$ and $e_{G(\rho_n,n)}(X_i,X_j)>0$ in every partition of $V(G(\rho_n,n))$ into sets $X_1,\ldots,X_t$ of size at most $s$.
We show that if $G(\rho_n,n)$ satisfies this condition, then we have $\OR(\cG) \geq st$ for every ordering $\cG$ of $G(\rho_n,n)$.

Let $\cG$ be an arbitrary ordering of $G(\rho_n,n)$.
We show that the probability that there is a red copy of $\cG$ in $\chi'$ is less than $1/2$.
Suppose there is a red copy $\cG_0$ of $\cG$ in $\chi'$.
For $i=1,\ldots,t$, let $J_i \colonequals V(\cG_0) \cap I_i$.
Then $J_1,\ldots,J_t$ induces a partition of the vertices of $\cG$ into $t$ (possibly empty) intervals of size at most $s$.
The number of such partitions of $\cG$ is at most 
\[\binom{n+t-1}{t-1} \leq \left(\frac{e(n+t-1)}{t-1}\right)^{t-1} \leq \left(\frac{3n}{t}\right)^t =\left(\frac{6\log{(1/\zeta_n)}}{\zeta_n}\right)^t< 2^{2t\log{(1/\zeta_n)}},\]
where the last inequality follows from $\zeta_n =o(1)$, as then $6\log{(1/\zeta_n)} < 1/\zeta_n$.

By Lemma~\ref{lem-setPartitionManyEdges}, there are more than $M$ pairs $(J_i,J_j)$ with $1 \leq i \leq j \leq t$ and $e_{\cG_0}(J_i,J_j)>0$.
From the choice of $\chi'$, the red copy $\cG_0$ corresponds to an ordered subgraph $\cH$ of $\cR$ with more than $M$ edges that are all red in $\chi$.
Such ordered graph $\cH$ appears in $\cR$ with probability at most $2^{-M-1}$.

The edges of $\cH$ are determined by the partition $J_1,\ldots,J_t$ of $\cG$.
Thus, by the union bound, the probability that there is a red copy of $\cG$ in $\chi'$ is less than
\[2^{2t\log{(1/\zeta_n)}} \cdot 2^{-M-1} = 2^{\zeta_n n-\zeta_n n-1}=1/2.\]

From symmetry, the probability that there is a blue copy of $\cG$ in $\chi'$ is less than $1/2$.
Thus the probability that $\chi'$ contains no monochromatic copy of $\cG$ is positive.
It follows that there is a coloring of $\cK_{st}$ with no monochromatic copy of $\cG$ and we have $\OR(\cG) \geq  st$.

Since $\cG$ is an arbitrary ordering of $G(\rho_n,n)$, we obtain that asymptotically almost surely the graph $G(\rho_n,n)$ satisfies $\minr(G(\rho_n,n)) \geq st$.
\end{proof}

\section{A linear upper bound for graphs of maximum degree two}
\label{sec:proof2Regular}

In this section, we prove Theorem~\ref{thm-2Regular}, which says that every graph on $n$ vertices with maximum degree 2 can be ordered in such a way
that the corresponding ordered Ramsey number is linear in $n$.
We also prove a stronger Tur\'{a}n-type statement, Theorem~\ref{thm-evenCycleInBlowUp}, for bipartite graphs of maximum degree 2.
Since every graph of maximum degree two is a union of vertex disjoint paths, cycles, and isolated vertices,
it suffices to prove these results for 2-regular graphs.
We make no serious effort to optimize the constants.

Every 2-regular graph $G$ is a union of pairwise vertex disjoint cycles.
Thus a first idea how to prove Theorem~\ref{thm-2Regular} might be to find an ordering of every cycle $C_k$ with the ordered Ramsey number linear in $k$ and then place these ordered cycles next to each other into disjoint intervals.
However, this attempt fails in general, as shown by the following example.

Let $\cG$ be such ordering of a 2-regular graph that consists of the cycle $C_{n/3}$ and $2n/9$ copies of~$C_3$.
Let $N \colonequals (n/3-1)4n/9$ and let $\chi$ be the following coloring of $\cK_N$. 
Partition the vertex set of $\cK_N$ into intervals $I_1,\ldots,I_{4n/9}$ of size $n/3-1$ and color all edges between vertices from the same interval $I_i$ blue.
Then color all remaining edges red.
The coloring $\chi$ contains no blue copy of $\cG$, as the longest cycle $C_{n/3}$ has more vertices than any interval $I_i$.
There is also no red copy of $\cG$ in $\chi$, as no two vertices of any red ordered 3-cycle are in the same interval $I_i$.
Altogether, we have $\OR(\cG) \geq \Omega(n^2)$.

Our approach in the proofs of Theorems~\ref{thm-2Regular} and~\ref{thm-evenCycleInBlowUp} is based on so-called alternating paths.
Let $v_1,\dots,v_n$ be the vertices of the $n$-vertex path $P_n$ in the order as they appear along the path.
The \emph{alternating path} $\cP_n=(P_n,\prec)$ is the ordering of $P_n$ where $v_1 \prec v_3 \prec v_5 \prec \cdots \prec v_n \prec v_{n-1} \prec v_{n-3} \prec \cdots \prec v_2$ for $n$ odd and $v_1 \prec v_3 \prec v_5 \prec \cdots \prec v_{n-1} \prec v_{n} \prec v_{n-2} \prec\cdots \prec v_2$ for $n$ even.
See part a) of Figure~\ref{fig-altPath}.

\begin{figure}[ht]
\centering
\includegraphics{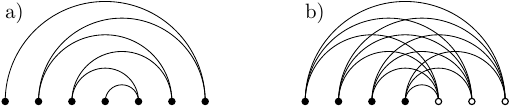}
\caption{a)~The alternating path $\cP_7$. b) The ordered graph $\cK_{4,3}$.}
\label{fig-altPath}
\end{figure}

This ordering of a path is a key ingredient in the proofs of Theorems~\ref{thm-2Regular} and \ref{thm-evenCycleInBlowUp}.
Balko~et~al.~\cite{bckk13} showed that the ordered Ramsey number of the alternating path $\cP_n$ is linear in $n$.
Here we prove the following stronger Tur\'{a}n-type result.

\begin{lemma}
\label{lem-TuranAltPath}
Let $\varepsilon>0$ be a real constant.
Then, for every integer $n$, every ordered graph on $N\geq n/\varepsilon$ vertices with at least $\varepsilon N^2$ edges contains $\cP_n$ as an ordered subgraph.
\end{lemma}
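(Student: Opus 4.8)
The plan is to prove the statement by induction on $n$, peeling off two vertices of $\cP_n$ at a time from the ``right end'' of the alternating order. Recall that the alternating path $\cP_n$ has vertices ordered $v_1 \prec v_3 \prec v_5 \prec \cdots$ and then $\cdots \prec v_4 \prec v_2$, so that the last two vertices in the ordering are $v_4$ and $v_2$, which are the two neighbours of $v_3$; equivalently, $\cP_n$ is obtained from $\cP_{n-2}$ (on vertices $v_1,\dots,v_{n-2}$ suitably relabeled, with $v_3$ playing the role of an endpoint) by attaching a path of length two to its rightmost vertex, placing both new vertices to the right of everything else. This suggests the following embedding scheme: find a ``heavy'' vertex far enough to the right that has many neighbours to its left, use one such neighbour as the attachment point, and recurse on the part of the graph lying to the left of that neighbour.

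Here is the key step in more detail. Let $\cH = (H,\prec)$ be an ordered graph on $N \ge n/\varepsilon$ vertices with at least $\varepsilon N^2$ edges. I want to locate a vertex $w$ and a neighbour $u \prec w$ of $w$ such that the prefix of $\cH$ consisting of the vertices strictly to the left of $u$ still has a linear number of vertices and a quadratic number of edges, so that induction applies with parameter $n-2$. To make this quantitative, set $m \colonequals N - n/\varepsilon \ge 0$ and argue that among the $n/\varepsilon$ rightmost vertices, at least one — call it $w$ — has at least $\varepsilon N$ neighbours among the first $N - n/\varepsilon = m$ vertices (otherwise the total edge count would be too small: the edges are either inside the right block, of which there are at most $\binom{n/\varepsilon}{2}$, or they go from the right block to the prefix, of which there would be fewer than $(n/\varepsilon)\cdot \varepsilon N = nN$, and one checks these do not sum to $\varepsilon N^2$ once $N$ is large — this is where the precise constant bookkeeping happens). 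Among those $\ge \varepsilon N$ left-neighbours of $w$, pick the rightmost one, $u$; then all the other neighbours of $w$ lie in the prefix $\cH'$ of vertices $\prec u$, so $\cH'$ has $\ge \varepsilon N - 1$ vertices and, crucially, I also need $\cH'$ to still be edge-dense. That last point is the subtlety: deleting a long suffix can destroy edge density, so I should instead choose $w$ and $u$ more carefully — e.g. take $u$ to be a vertex such that the prefix $\cH'$ to its left has at least $\varepsilon N'^2$ edges where $N' = |\cH'|$ and $N' \ge (N - \text{const})$; such a $u$ exists by an averaging/extremal argument over all ``cut points'', since the total edge count is $\varepsilon N^2$ and removing a single vertex changes it by at most $N$.

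So the cleanest formulation is: strengthen the induction to carry a vertex budget, proving that any ordered graph on $N \ge n/\varepsilon$ vertices with $\ge \varepsilon N^2$ edges contains $\cP_n$, and in the inductive step find a vertex $u$ with (a) at least two neighbours $w_1, w_2$ with $u \prec w_1, u \prec w_2$ — actually only one suffices since the path has length two at that end, one needs $w$ with $u \prec w$ and then a further neighbour of $w$ to the right of $w$; let me re-examine: the attached path of length two is $v_3 - v_4 - v_5$ wait, no, for the alternating path the two last vertices $v_4, v_2$ are both neighbours of $v_3$, so actually I attach a \emph{cherry} (a path $v_4 - v_3 - v_2$) with center $v_3$ lying in the already-embedded prefix and both leaves placed at the far right. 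Thus the step is: embed $\cP_{n-2}$ in a prefix $\cH'$ so that its rightmost-in-order vertex maps to some $x$, then find two distinct vertices $y_1, y_2$ with $x \prec y_1$, $x \prec y_2$, both adjacent to $x$ in $\cH$. The main obstacle, and the place to spend care, is arranging the recursion so that $\cH'$ is simultaneously (i) a genuine prefix under $\prec$, (ii) of size $\ge (n-2)/\varepsilon$, (iii) of edge density $\ge \varepsilon$, and (iv) its order-maximal embedded vertex $x$ has two right-neighbours in $\cH$. I expect this to reduce, after choosing constants, to a pigeonhole statement: in an $N$-vertex $\varepsilon N^2$-edge ordered graph, there is a cut point after which at most (a constant number of) vertices lie yet whose left part retains density $\varepsilon$, together with a vertex having $\ge 2$ neighbours to its right inside that small suffix. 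Balancing these two requirements is the crux; everything else is routine estimation and the base case $n \le 2$, which is trivial.
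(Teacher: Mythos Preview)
Your proposal rests on a structural claim about $\cP_n$ that is false. You assert that the two rightmost vertices $v_4,v_2$ in the alternating order are both leaves attached to $v_3$, so that $\cP_n$ is $\cP_{n-2}$ plus a ``cherry'' centred at its rightmost vertex. It is true that $v_4$ and $v_2$ are the two neighbours of $v_3$ in the path $P_n$, but $v_4$ is \emph{not} a leaf: it is also adjacent to $v_5$. Deleting $v_2$ and $v_4$ from $P_n$ leaves $v_1$ and $v_3$ isolated from the rest, so what remains is not $P_{n-2}$. More generally, a path has exactly two leaves, at opposite ends; one can never obtain $P_n$ from $P_{n-2}$ by attaching a cherry anywhere.

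The correct two-vertex recursion for $\cP_n$ removes the \emph{leftmost} vertex $v_1$ and the \emph{rightmost} vertex $v_2$; what remains is the alternating path on $v_3,\dots,v_n$. To rebuild $\cP_n$ one must therefore add one vertex on the far left and one on the far right, with the new rightmost vertex adjacent both to the new leftmost vertex and to the old leftmost vertex $v_3$ of the embedded $\cP_{n-2}$. This is awkward for an inductive embedding: after placing $\cP_{n-2}$ you need a vertex to the right of the entire embedding that sees both the image of $v_3$ (sitting at the far \emph{left} of the embedded $\cP_{n-2}$) and some still-further-left fresh vertex. Your sketch never confronts this; it is built on the wrong decomposition (``both new vertices to the right''), and even after the correction the density-maintenance and neighbour-existence issues you yourself flag are left unresolved.

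The paper's proof avoids all of this with a single global deletion process rather than induction: for $n-2$ rounds, alternately delete from every vertex the edge to its current leftmost neighbour, then the edge to its current rightmost neighbour; at most $N$ edges disappear per round, and since $\varepsilon N^2 \ge nN > (n-2)N$ some edge survives. Tracing the deletions backward from that surviving edge produces $\cP_n$ directly, with the leftmost/rightmost choices automatically enforcing the alternating order.
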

\begin{proof}
For a given $\varepsilon>0$, let $\cH=(H,\prec)$ be an ordered graph on $N\geq n/\varepsilon$ vertices with at least $\varepsilon N^2$ edges.
Without loss of generality, we assume that the vertex set of $\cH$ is $[N]$.

For a vertex $v$ of $\cH$, the \emph{leftmost neighbor of $v$} in $\cH$ is the minimum from $\{u \in V(H) \colon  u \prec v, \{u,v\} \in E(H)\}$, if it exists.
The \emph{rightmost neighbor of $v$} in $\cH$ is the maximum from $\{u \in V(H) \colon v \prec u, \{u,v\} \in E(H)\}$, if it exists.

We consider the following process of removing edges of $\cH$ that proceeds in steps $1,\ldots,n-2$.
In every odd step of the process, we remove edges $\{u,v\}$ for every vertex $v$ of $\cH$ such that $u$ is the leftmost neighbor of $v$ (if it exists).
In every even step, we remove edges $\{u,v\}$ for every vertex $v$ of $\cH$ such that $u$ is the rightmost neighbor of $v$ (if it exists).
Clearly, we remove at most $N$ edges of $\cH$ in every step.
In total, we remove at most $(n-2)N$ edges of $\cH$ once the process is finished.

From the choice of $N$, we have $\varepsilon N^2\geq n N > (n-2)N$ and thus there is at least one edge $\{v_{n-1},v_n\}$ of $\cH$ that we did not remove.
Without loss of generality we assume that $v_n \prec v_{n-1}$ for $n$ odd and $v_{n-1} \prec v_n$ for $n$ even.

We now follow the process of removing the edges of $\cH$ backwards and we construct the alternating path $\cP_n$ on vertices $v_1, \ldots, v_n$.
For $i =n-2,\ldots,1$, we let $v_i$ be the vertex that was removed in the $i$th step of the removing process as a neighbor of $v_{i+1}$.
If there is no such vertex $v_i$, then we would remove the edge $\{v_{i+1},v_{i+2}\}$ in the $i$th step of the removing process, which is impossible.
Consequently, the construction of $\cP_n$ stops at $v_1$ and we obtain an alternating path on $n$ vertices as an ordered subgraph of $\cH$.
\end{proof}

\begin{corollary}
\label{cor-rainbowMatch}
Let $\varepsilon>0$ be a real constant.
Then, for every integer $n$, every ordered graph on $N\geq 2n/\varepsilon$ vertices with at least $\varepsilon N^2$ edges contains $\cM_{2n}$ as an ordered subgraph.
\end{corollary}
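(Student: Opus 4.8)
The plan is to deduce the corollary directly from Lemma~\ref{lem-TuranAltPath} by observing that the ordered matching $\cM_{2n}$ (the one with vertex set $[2n]$ and edges $\{i,2n+1-i\}$ for $i\in[n]$) is an ordered subgraph of the alternating path $\cP_{2n}$ on $2n$ vertices. Once this is established, the statement is immediate: if an ordered graph on $N\geq 2n/\varepsilon$ vertices has at least $\varepsilon N^2$ edges, then Lemma~\ref{lem-TuranAltPath}, applied with the parameter $2n$ in place of $n$, guarantees that it contains $\cP_{2n}$ as an ordered subgraph, and hence it also contains $\cM_{2n}$.

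So the real content is the embedding $\cM_{2n}\subseteq\cP_{2n}$, which I would verify by bookkeeping the left-to-right positions of the vertices $v_1,\dots,v_{2n}$ of $\cP_{2n}$. By the definition of the alternating path for an even number of vertices, the odd-indexed vertices $v_1,v_3,\dots,v_{2n-1}$ occupy, in increasing order of index, positions $1,2,\dots,n$, so $v_{2k-1}$ lies in position $k$; and the even-indexed vertices appear in the order $v_{2n},v_{2n-2},\dots,v_2$ in positions $n+1,n+2,\dots,2n$, so $v_{2k}$ lies in position $2n+1-k$. Therefore each path edge $v_{2k-1}v_{2k}$ joins the vertex in position $k$ to the vertex in position $2n+1-k$, for every $k\in[n]$. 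These $n$ edges are pairwise vertex-disjoint and, after identifying each position $i$ with the integer $i$, they are exactly the edges of $\cM_{2n}$. Deleting from $\cP_{2n}$ the remaining $n-1$ path edges (those of the form $v_{2k}v_{2k+1}$) thus leaves a copy of $\cM_{2n}$ on the same vertex set, which is precisely the assertion $\cM_{2n}\subseteq\cP_{2n}$ as ordered graphs.

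The only step demanding any care is this position count for $\cP_{2n}$, and even that is routine; there is no genuine obstacle, and the corollary follows essentially verbatim from Lemma~\ref{lem-TuranAltPath}.
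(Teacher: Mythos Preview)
Your argument is correct and is exactly the paper's approach: the paper's proof consists of the single sentence that the result follows from Lemma~\ref{lem-TuranAltPath} since $\cM_{2n}$ is an ordered subgraph of $\cP_{2n}$. Your additional verification of the embedding via the position count is a correct elaboration of this fact.
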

\begin{proof}
This follows easily from Lemma~\ref{lem-TuranAltPath}, as $\cM_{2n}$ is an ordered subgraph of~$\cP_{2n}$.
\end{proof}

For positive integers $r$ and $s$, let $\cK_{r,s}$ be the (unique up to isomorphism) ordering of $K_{r,s}$, in which the color classes of $K_{r,s}$ form two disjoint intervals such that the interval of size $r$ is to the left of the interval of size $s$.
See part b) of Figure~\ref{fig-altPath}.

For positive integers $k$ and $n$, we use $\cP^k_n$ to denote the ordered graph that is obtained from the alternating path on $n$ vertices by replacing every edge with a copy of $\mathcal{K}_{k,k}$.
Formally, let $\cP_n$ be the alternating path with the vertex set $[n]$ and let $(B_1,\ldots,B_n)$ be a sequence of $n$ intervals of size $k$ listed from left to right.
Then $\cup_{i=1}^kB_i$ is the vertex set of $\cP^k_n$ and a pair $\{u,v\}$ with $u \in B_i$ and $v \in B_j$ is an edge of $\cP^k_n$ if and only if $\{i,j\}$ is an edge of $\cP_n$.
See Figure~\ref{fig-altBlowUp} for an illustration.
We call the ordered graph $\cP^k_n$ the \emph{$k$-blow-up of $\cP_n$} and the intervals $B_1,\ldots,B_n$ are called the \emph{blocks of $\cP^k_n$}.
Note that $\cP^k_n$ has $kn$ vertices and that $\cP^1_n=\cP_n$.

\begin{figure}[ht]
\centering
\includegraphics{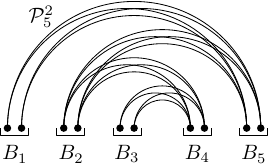}
\caption{The 2-blow-up $\cP^2_5$ of $\cP_5$.}
\label{fig-altBlowUp}
\end{figure}

If $\cH$ is an ordered graph with the vertex set partitioned into an order-obeying sequence of intervals $I_1,\ldots,I_m$, then we say that $\cP^k_n$ is an \emph{ordered subgraph of $\cH$ respecting the partitioning $I_1,\ldots,I_m$} if $\cP^k_n \subseteq \cH$, every block of $\cP^k_n$ is contained in some interval $I_i$, and no two blocks of $\cP^k_n$ are contained in the same interval~$I_i$.

The following result is a variant of Lemma~\ref{lem-TuranAltPath} for $k$-blow-ups of $\cP_n$.

\begin{lemma}
\label{lem-TuranBlowUp}
Let $\varepsilon>0$ be a real constant and let $k$ and $d \geq k(2/\varepsilon)^k$ be positive integers.
Then, for every integer $n$, every ordered graph $\cH$ with $N\geq 2d^{2k+1}\varepsilon^{-1}n$ vertices partitioned into an order-obeying sequence of intervals $I_1,\ldots,I_{N/d}$, each of size $d$, and with at least $\varepsilon N^2$ edges contains $\cP^k_n$ as an ordered subgraph respecting the partitioning $I_1,\ldots,I_{N/d}$.
\end{lemma}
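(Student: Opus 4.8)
The plan is to reduce the statement, through a double pigeonhole, to a single application of Lemma~\ref{lem-TuranAltPath} in an auxiliary ``interval graph'', exploiting the fact that the alternating path $\cP_n$ has interval chromatic number two.

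Write $M\colonequals N/d$ for the number of intervals and identify the vertex set of each $I_i$ with $[d]$. The edges of $\cH$ lying inside a single interval number at most $M\binom{d}{2}=O(N^2/M)=o(N^2)$, so $\cH$ has $(1-o(1))\varepsilon N^2$ edges running between distinct intervals; hence the average, over the $\binom{M}{2}$ unordered pairs of intervals, of the bipartite density between the two intervals of a pair is at least $(2-o(1))\varepsilon$, and therefore at least a $(1-o(1))\varepsilon$-fraction — that is, $\Omega(\varepsilon M^2)$ — of these pairs have bipartite density at least $\varepsilon$. For each such pair, ordered as $(I_i,I_j)$ with $i<j$, the bound $d\ge k(2/\varepsilon)^k$ lets the K\H{o}v\'{a}ri--S\'{o}s--Tur\'{a}n theorem produce a complete bipartite $K_{k,k}$ between some $k$-set $S\subseteq I_i$ and some $k$-set $T\subseteq I_j$; viewing $(S,T)$ as an element of $\binom{[d]}{k}\times\binom{[d]}{k}$ colours these ordered pairs with at most $d^{2k}$ colours, so some colour $(S_0,T_0)$ is used by $\Omega(\varepsilon M^2/d^{2k})$ of them. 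Let $\cH^{*}$ be the ordered graph on $[M]$ in which $\{i,j\}$, $i<j$, is an edge exactly when the vertices of $I_i$ indexed by $S_0$ are completely joined in $\cH$ to the vertices of $I_j$ indexed by $T_0$. Then $\cH^{*}$ has $\Omega(\varepsilon M^2/d^{2k})$ edges, and since $M=N/d\ge 2d^{2k}\varepsilon^{-1}n$ it is large enough for Lemma~\ref{lem-TuranAltPath} to apply to $\cH^{*}$ and yield an ordered copy of $\cP_n$ inside it.

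This copy singles out intervals $L^{(1)},\dots,L^{(n)}$, listed from left to right, such that for every edge $\{p,q\}$ of $\cP_n$ with $p<q$ the $S_0$-part of $L^{(p)}$ is completely joined to the $T_0$-part of $L^{(q)}$. Now $\cP_n$ has interval chromatic number two: all of its edges run between the first $\lceil n/2\rceil$ positions and the last $\lfloor n/2\rfloor$ positions, and in each edge the smaller endpoint is one of the first $\lceil n/2\rceil$ positions. So we may take, as the block for $L^{(p)}$, its $S_0$-part when $p\le\lceil n/2\rceil$ and its $T_0$-part when $p>\lceil n/2\rceil$: these are $n$ blocks of size $k$ in distinct intervals, in the correct left-to-right order, with every edge of $\cP_n$ realised by a complete bipartite graph between the corresponding blocks. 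This is precisely an ordered copy of $\cP^k_n$ respecting the partition $I_1,\dots,I_{N/d}$.

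The step I expect to be the crux — and the reason the lemma is not entirely routine — is making the blocks of the many copies of $K_{k,k}$ mutually \emph{consistent}. A vertex of an internal block must lie on the ``left'' side of one $K_{k,k}$ and simultaneously on a side of another, and a naive greedy choice of blocks along the path can stall, since the density of a pair $(I_i,I_j)$ says nothing about the density of an arbitrary large subset of $I_i$ towards $I_j$. The pigeonhole that pins down a single type $(S_0,T_0)$ of $K_{k,k}$ is exactly what dissolves this difficulty, and it is affordable only because $\cP_n$ has interval chromatic number two, so two global block patterns suffice; the price is the factor $d^{2k}$, which is why the hypothesis asks for $N$ of order $d^{2k+1}\varepsilon^{-1}n$. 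What remains is bookkeeping — verifying that the constants hidden in the $\Omega(\cdot)$'s and in the K\H{o}v\'{a}ri--S\'{o}s--Tur\'{a}n estimate are consistent with the exact bounds $d\ge k(2/\varepsilon)^k$ and $N\ge 2d^{2k+1}\varepsilon^{-1}n$ — which we do not carry out here.
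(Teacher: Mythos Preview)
Your proposal is correct and follows essentially the same route as the paper: use K\H{o}v\'{a}ri--S\'{o}s--Tur\'{a}n on each sufficiently dense pair of intervals to find a $\cK_{k,k}$, pigeonhole on the pair of ``types'' $(S_0,T_0)\in\binom{[d]}{k}^2$, build the auxiliary ordered graph on $[N/d]$, and apply Lemma~\ref{lem-TuranAltPath} there. The paper carries out the constants exactly (obtaining $\ge \frac{\varepsilon}{2d^{2k}}(N/d)^2$ type-$(A_0,B_0)$ edges, which matches the hypothesis $N/d\ge 2d^{2k}\varepsilon^{-1}n$ on the nose), whereas you leave this as bookkeeping; your density-averaging step gives the same count up to lower-order terms. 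Your explicit use of the fact that $\cP_n$ has interval chromatic number two --- assigning the $S_0$-block to the first $\lceil n/2\rceil$ positions and the $T_0$-block to the rest --- is precisely the content of the paper's terse final sentence ``It follows from the construction of $\cR$ that $\cP^k_n$ is an ordered subgraph of $\cH$'', and your discussion of why this consistency step is the crux is on point.
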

\begin{proof}
For integers $k$ and $m\geq k$, the K\H{o}v\'{a}ri--S\'{o}s--Tur\'{a}n Theorem~\cite{kovari54} says that every bipartite graph with color classes of size $m$, which contains no $K_{k,k}$ as a subgraph, has fewer than $k^{1/k}m^{2-1/k}+km \leq 2k^{1/k}m^{2-1/k}$ edges. 
Since the ordering $\cK_{m,m}$ of $K_{m,m}$ is uniquely determined up to isomorphism, we see that the K\H{o}v\'{a}ri--S\'{o}s--Tur\'{a}n theorem is true in the ordered setting.
That is, every ordered graph, which is contained in $\cK_{m,m}$ and which contains no $\cK_{k,k}$ as an ordered subgraph, has fewer than $2k^{1/k}m^{2-1/k}$ edges.

Let $\cH$ be an ordered graph on $N\geq 2d^{2k+1}\varepsilon^{-1}n$ vertices partitioned into an order-obeying sequence of intervals $(I_1,\ldots,I_{N/d})$, each of size $d$, and with at least $\varepsilon N^2$ edges.

There are at least $\frac{\varepsilon N^2}{2d^2}$ pairs $\{i,j\} \in \binom{[N/d]}{2}$ with $e_\cH(I_i,I_j) \geq 2k^{1/k}d^{2-1/k}$.
Otherwise there are fewer than 
\begin{align*}
\frac{N}{d}\binom{d}{2}&+(1-\varepsilon)\binom{N/d}{2}\cdot2k^{1/k}d^{2-1/k}+\frac{\varepsilon N^2}{2d^2}\cdot d^2 \\
&\leq \frac{N^2}{2d^2}\cdot2k^{1/k}d^{2-1/k}+\frac{\varepsilon N^2}{2d^2}\cdot d^2 =\varepsilon N^2\left(\frac{1}{\varepsilon}\left(\frac{k}{d}\right)^{1/k}+\frac{1}{2}\right)
\leq \varepsilon N^2
\end{align*}
edges in $\cH$, which contradicts our assumptions.
The last inequality follows from $d \geq k(2/\varepsilon)^k$.
By the K\H{o}v\'{a}ri--S\'{o}s--Tur\'{a}n Theorem, there are at least $\frac{\varepsilon N^2}{2d^2}$ pairs $\{I_i,I_j\}$ that induce a copy of $\cK_{k,k}$ in $\cH$.

Let $\cR$ be an ordered graph with the vertex set $[N/d]$ such that $\{i,j\}$ is an edge of $\cR$ if and only if there is a copy of $\cK_{k,k}$ in $\cH$ with one part in $I_i$ and the other one in $I_j$.
For every edge $\{i,j\}$ of $\cR$, we fix one such copy $\cK$ of $\cK_{k,k}$ and say that $\cK$ \emph{represents} the edge $\{i,j\}$.
The \emph{type of the left color class} of $\cK$ is the image of the left color class of $\cK$ via the bijective mapping $I_i \to [d]$ that preserves the ordering of $I_i$.
Similarly, the \emph{type of the right color class} of $\cK$ is the image of the right color class via the bijective mapping $I_j \to [d]$ that preserves the ordering of~$I_j$.

We know that $\cR$ contains at least $\frac{\varepsilon N^2}{2d^2}$ edges.
Let $T$ be a set of all pairs $(A,B)$ of $k$-tuples $A,B\in\binom{[d]}{k}$.
Note that the size of $T$ is $\binom{d}{k}^2\leq d^{2k}$.
We assign a pair $(A,B)$ to every edge $\{i,j\}$ of $\cR$ if the type of the left and the right color class of the copy of $\cK_{k,k}$ that represents $\{i,j\}$ is $A$ and $B$, respectively.
By the pigeonhole principle there are at least $\frac{\varepsilon N^2}{2d^{2k+2}}$ edges of $\cR$ with the same pair $(A_0,B_0)$.

Let $\cR'$ be the ordered subgraph of $\cR$ consisting of edges that were assigned the pair $(A_0,B_0)$.
By our observations, $\cR'$ contains $N/d$ vertices and at least $\frac{\varepsilon}{2d^{2k}}(\frac{N}{d})^2$ edges.
Since $N/d \geq 2d^{2k}\varepsilon^{-1}n$, Lemma~\ref{lem-TuranAltPath} implies that $\cP_n$ is an ordered subgraph of $\cR'$.
This alternating path corresponds to a monochromatic copy of $\cP_n$ in $\cR$.
It follows from the construction of $\cR$ that $\cP^k_n$ is an ordered subgraph of $\cH$ respecting the partitioning $I_1,\ldots,I_{N/d}$.
\end{proof}

We now introduce orderings of cycles that we use in the proofs of Theorems~\ref{thm-2Regular} and~\ref{thm-evenCycleInBlowUp}.
The final ordering of a given 2-regular graph will be obtained by constructing a union of these ordered cycles.
For a positive integer $n$, let $\cP_n=(P_n,\prec)$ be the alternating path on vertices $u_1 \prec \cdots \prec u_n$.

For $n \geq 3$, the even \emph{alternating cycle} $\cC_{2n-2}=(C_{2n-2},<)$ is obtained from $\cP_n$ as follows.
First, for every $i \in [n] \setminus \{1,\lceil \frac{n+1}{2} \rceil\}$, we replace each vertex $u_i$ with two vertices $v_i < w_i$.
For $i\in\{1,\lceil \frac{n+1}{2} \rceil\}$, we set $v_i\colonequals u_i$ and $w_i\colonequals u_i$.
Then, for every edge $\{u_i,u_j\}$ of $\cP_n$, we place edges $\{v_i,w_j\}$ and $\{w_i,v_j\}$ into $\cC_{2n-2}$.
The \emph{blocks of $\cC_{2n-2}$} are the sets $\{v_i,w_i\}$ for $i=1,\ldots,n$.
The edge that contains the $(n-1)$th and the $n$th vertex of $\cC_{2n-2}$ is the \emph{inner edge of $\cC_{2n-2}$}.
See part~a) of Figure~\ref{fig-altCyc}.

For $n \geq 1$, the odd alternating cycle $\cC_{2n+1}=(C_{2n+1},<)$ is constructed similarly.
For every $i \in [n] \setminus \{\lceil \frac{n+1}{2} \rceil\}$, we replace each vertex $u_i$ with two vertices $v_i < w_i$.
For $i=\lceil \frac{n+1}{2} \rceil$ we set $v_i\colonequals u_i$ and $w_i\colonequals u_i$.
Then we place edges $\{v_i,w_j\}$ and $\{w_i,v_j\}$ for every edge $\{u_i,u_j\}$ of~$\cP_n$.
Additionally, we insert two new vertices $v_0<w_0$ to the left of $v_1$ and add edges $\{v_0,w_0\}$, $\{v_0,w_1\}$, and $\{w_0,v_1\}$ into $\cC_{2n+1}$.
The \emph{blocks of $\cC_{2n+1}$} are the sets $\{v_i,w_i\}$ for $i=1,\ldots,n$.
The edge that contains the $(n+2)$th and the $(n+3)$th vertex of $\cC_{2n+1}$ is the \emph{inner edge of $\cC_{2n+1}$} and the edge $\{v_0,w_0\}$ is the \emph{outer edge of $\cC_{2n+1}$}.
See part~b) of Figure~\ref{fig-altCyc}.

\begin{figure}[ht]
\centering
\includegraphics{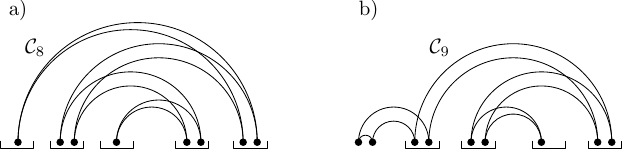}
\caption{a) The alternating cycle $\cC_8$. b) The alternating cycle $C_9$.}
\label{fig-altCyc}
\end{figure}

Using Lemma~\ref{lem-TuranBlowUp}, we can now easily prove Theorem~\ref{thm-evenCycleInBlowUp}. 
We note that no such Tur\'{a}n-type result holds when the given graph is not bipartite.
For example, the ordered graph $\cK_{N/2,N/2}$ contains $N^2/4$ edges, while no ordered odd cycle is an ordered subgraph of $\cK_{N/2,N/2}$.

\begin{proof}[Proof of Theorem~\ref{thm-evenCycleInBlowUp}]
It is sufficient to prove the statement for 2-regular graphs, as every bipartite graph on $n$ vertices with maximum degree 2 is a subgraph of a bipartite 2-regular graph on at most $4n$ vertices.

Let $G$ be a given bipartite 2-regular graph partitioned into even cycles $C_{n_1},\ldots,C_{n_m}$ with $n_1,\ldots,n_m \geq 4$ and $n_1 + \cdots + n_m = n$.
First, we order each cycle $C_{n_i}$ as the alternating cycle~$\cC_{n_i}$.
 The ordering $\cG$ of $G$ is then constructed by placing the vertex set of $\cC_{n_i}$ between the vertices of the inner edge of $\cC_{n_{i-1}}$ for every $i=2,\ldots,m$; see Figure~\ref{fig-altEvenCycEmbed}.

Let $\cH$ be a given ordered graph with $N\geq  2^{16}\varepsilon^{-11} n$ vertices and with at least $\varepsilon N^2$ edges.
We show that $\cG$ is an ordered subgraph of $\cH$.

We apply Lemma~\ref{lem-TuranBlowUp} for $k\colonequals2$, $d \colonequals 2(2/\varepsilon)^2$, and $\cH$ that is partitioned into an order-obeying sequence of intervals $I_1,\ldots,I_{N/d}$, each of size $d$.
Since $N \geq 2d^{2k+1}\varepsilon^{-1}n$, this gives us a copy of $\cP^2_n$ in $\cH$.
To finish the proof, we observe that $\cG$ is an ordered subgraph of $\cP^2_n$.
It suffices to greedily map the vertices of $\cG$ into blocks of $\cP^2_n$ such that every block of $\cC_{n_i}$ is contained in a block of~$\cP^2_n$.
Again, see Figure~\ref{fig-altEvenCycEmbed} for an illustration.
Note that we might not use all blocks of $\cP^2_n$ in the process.
\end{proof}

\begin{figure}[ht]
\centering
\includegraphics{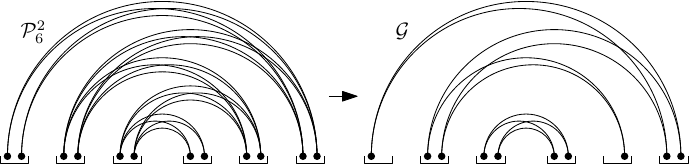}
\caption{The ordering $\cG$ of a 2-regular graph consisting of $\cC_6$ and $\cC_4$ is an ordered subgraph of $\cP^2_6$.}
\label{fig-altEvenCycEmbed}
\end{figure}

For the rest of the section, we deal with the case when $G$ is not bipartite.
To do so, we first introduce an auxiliary ordered graph.

For integers $k$ and $n$, let $(B_1,\ldots,B_n)$ be a sequence of order-obeying intervals, each of size $k$.
Consider the ordered matching $\cM_{2n}=(M_{2n},\prec)$ and let $u_1\prec \cdots \prec u_n$ and $v_1 \prec \cdots \prec v_n$ be the vertices of the left and the right color class of~$\cM_{2n}$, respectively.
We place $\cM_{2n}$ to the left of $B_1$.
For every $i \in [n]$, we then add the edges $\{u_i,w\}$ and $\{v_{n+1-i},w\}$ for every vertex $w \in B_i$ and use $\cT^k_n$ to denote the resulting ordered graph on $(k+2)n$ vertices; see part~a) of Figure~\ref{fig-prefixGraph}.

The intervals $B_1,\ldots,B_n$ are the \emph{blocks of $\cT^k_n$}.
Let $\cH$ be an ordered graph with the vertex set partitioned into a sequence of order-obeying intervals $I_1,\ldots,I_m$.
We say that $\cT^k_n$ is an \emph{ordered subgraph of $\cH$ respecting the partitioning $I_1,\ldots,I_m$} if $\cT^k_n \subseteq \cH$, every block of $\cT^k_n$ is contained in some interval $I_i$ and no two blocks of $\cT^k_n$ are contained in the same interval $I_i$.

\begin{figure}[ht]
\centering
\includegraphics{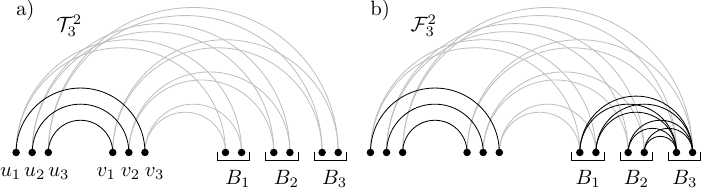}
\caption{a) The ordered graph $\cT^2_3$. b) The ordered graph $\cF^2_3$.}
\label{fig-prefixGraph}
\end{figure}

\begin{lemma}
\label{lem-prefixGraph}
Let $\varepsilon>0$ be a real constant and let $k$ and $d\geq 8\varepsilon^{-2}k$ be positive integers.
Then, for every integer $n$, every ordered graph $\cH$ with $N\geq 2^{14} \varepsilon^{-8}d n$ vertices partitioned into a sequence of order-obeying intervals $I_1,\ldots,I_{N/d}$, each of size $d$, and with at least $\varepsilon N^3$ copies of $\cK_3$ contains $\cT^k_n$ as an ordered subgraph respecting the partitioning $I_1,\ldots,I_{N/d}$.
\end{lemma}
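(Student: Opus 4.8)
The plan is to pass from $\cH$ to a dense $3$-uniform hypergraph on the set of intervals in which every hyperedge records a ``book'' of $\cT^k_n$, and then to locate inside that hypergraph the specific ordered pattern that assembles into $\cT^k_n$, by a Dilworth/Mirsky argument. Write $M:=N/d$ for the number of intervals. First I would discard the degenerate triangles: the hypothesis on $N$ gives $d\le \varepsilon N/2$, so fewer than $\tfrac{\varepsilon}{2}N^3$ copies of $\cK_3$ have two vertices in a common interval, and hence at least $\tfrac{\varepsilon}{2}N^3$ copies $\{x,y,z\}$ with $x\prec y\prec z$ have $x,y,z$ in three intervals of strictly increasing indices. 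Call a triple of indices $a<b<c$ \emph{rich} if at least $\tfrac{\varepsilon}{8}d^3$ of these triangles have their smallest, middle, and largest vertex in $I_a$, $I_b$, $I_c$ respectively; since a non-rich triple carries fewer than $\tfrac{\varepsilon}{8}d^3$ of them and there are at most $\binom{M}{3}$ triples, at least $\tfrac{\varepsilon}{3}N^3$ of the triangles live in rich triples, and as each triple carries at most $d^3$ of them there are at least $\tfrac{\varepsilon}{3}M^3$ rich triples. From a rich triple $(a,b,c)$ I would extract a \emph{book}: a pair $x\in I_a$, $y\in I_b$ with $\{x,y\}\in E(\cH)$ together with a set $Z\in\binom{N(x)\cap N(y)\cap I_c}{k}$. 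Such a configuration must exist, because otherwise each of the at most $d^2$ edges of $\cH$ between $I_a$ and $I_b$ would have fewer than $k$ common neighbours in $I_c$, contradicting $\tfrac{\varepsilon}{8}d^3>kd^2$ (for which $d\gtrsim k/\varepsilon$, and in particular $d\ge 8\varepsilon^{-2}k$, already suffices).

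Let $\cR$ be the $3$-uniform hypergraph on $[M]$ whose edges are the rich triples, so $|E(\cR)|\ge\tfrac{\varepsilon}{3}M^3$, and order $E(\cR)$ by $\{a<b<c\}\prec\{a'<b'<c'\}$ iff $a<a'$, $b>b'$ and $c<c'$. I claim that a $\prec$-chain $e_1\prec\dots\prec e_n$ already produces $\cT^k_n$: assign to the $i$-th book of $\cT^k_n$ the book of $e_i$, that is, put $u_i:=x_i$, $v_{n+1-i}:=y_i$ and $B_i:=Z_i$. Since the $a$-coordinates strictly increase along the chain, the $b$-coordinates strictly decrease, the $c$-coordinates strictly increase, and $a(e_i)<b(e_i)<c(e_i)$, a direct check gives $u_1\prec\dots\prec u_n\prec v_1\prec\dots\prec v_n$, that every $B_i$ lies to the right of the matching, that $B_1\prec\dots\prec B_n$, and — crucially — that the $3n$ intervals used are pairwise distinct, so no two blocks share an interval; combined with $\{x_i,y_i\}\in E(\cH)$ and $Z_i\subseteq N(x_i)\cap N(y_i)$ this is exactly a copy of $\cT^k_n$ respecting the partition $I_1,\dots,I_{N/d}$.

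It remains to produce a $\prec$-chain of length $n$. By Mirsky's theorem it suffices to bound every $\prec$-antichain of $\cR$ by $4M^2$: then partitioning $E(\cR)$ into antichains needs at least $\tfrac{\varepsilon}{3}M^3/(4M^2)=\tfrac{\varepsilon}{12}M\ge n$ parts (using $M=N/d\ge 2^{14}\varepsilon^{-8}n$), forcing a chain of length $\ge n$. To bound an antichain $\cA$, project each edge $e=\{a<b<c\}$ to $(a,c)\in[M]^2$ and set $B_{a,c}:=\{b:\{a<b<c\}\in\cA\}$, so $|\cA|=\sum|B_{a,c}|$ over the projected set $S\subseteq[M]^2$. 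If $(a_1,c_1),(a_2,c_2)\in S$ with $a_1<a_2$ and $c_1<c_2$, then, since $\cA$ is an antichain, every $b_1\in B_{a_1,c_1}$ and $b_2\in B_{a_2,c_2}$ satisfy $b_1\le b_2$; hence along any sequence of points of $S$ strictly increasing in both coordinates the corresponding sets $B_{a,c}$ are essentially disjoint subsets of $[M]$ and their sizes sum to at most $2M$. Finally $S$ is covered by at most $2M$ such doubly increasing chains, because a subset of $[M]^2$ containing no two points strictly comparable in both coordinates has at most $2M$ elements (group it by first coordinate: the second-coordinate ranges of the groups are then pairwise nested apart), so Dilworth's theorem applies. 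Thus $|\cA|\le 2M\cdot 2M=4M^2$, as required.

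The step I expect to be the main obstacle is precisely this last one: the pattern that is needed is, in effect, a $\prec$-chain with all three coordinates \emph{strictly} monotone, the strictness in the last coordinate being forced by the demand that the blocks of $\cT^k_n$ occupy pairwise distinct intervals. A naive reduction that first cuts the intervals into fixed left/middle/right blocks keeps only $\Theta(n)$ usable rich triples, far too few to pull out a monotone pattern of length $n$ in three dimensions; it is the Dilworth/Mirsky argument together with the $O(M^2)$ antichain bound that makes the \emph{linear}-in-$n$ hypothesis $N\ge 2^{14}\varepsilon^{-8}dn$ enough. The cleanup and the extraction of a book from a rich triple are routine by comparison.
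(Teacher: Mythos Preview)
Your argument is correct and follows a genuinely different route from the paper's. The paper never introduces a poset on interval-triples; instead it proceeds through two nested applications of Corollary~\ref{cor-rainbowMatch} (the ``rainbow matching'' consequence of Lemma~\ref{lem-TuranAltPath}). First, an averaging over the length of the right leg and then over a cut point $j$ isolates sets $A=[j]$, $B=[N]\setminus[j]$ together with $\Omega(\varepsilon^2 N^2)$ edges inside $A$, each of which is the left leg of $\Omega(\varepsilon^2 N)$ triangles with apex in $B$; one application of Corollary~\ref{cor-rainbowMatch} then extracts a large copy $\cM_{2N'}$ on these edges. Second, a bipartite auxiliary graph $\cR'$ is built between the right endpoints $v_i$ of that matching and the intervals $I_l$ meeting $B$, with $v_i$ joined to $I_l$ whenever $\{u_i,v_i\}$ has at least $k$ common neighbours in $I_l$; a second application of Corollary~\ref{cor-rainbowMatch} to $\cR'$ singles out $n$ of the matching edges together with $n$ blocks in distinct intervals, arranged exactly as in $\cT^k_n$.

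Your Mirsky/Dilworth argument is more self-contained: it bypasses the alternating-path machinery entirely, the $O(M^2)$ antichain bound doing in one stroke the work that the paper delegates to two invocations of Corollary~\ref{cor-rainbowMatch}. (The antichain bound you flag as the obstacle is indeed fine: once the fibres $C_a$ are weakly decreasing subsets of $[M]$ with $\min C_{a}\ge \max C_{a'}$ for $a<a'$, one gets $\sum_a|C_a|\le p+(M-1)\le 2M-1$, and the same estimate handles $\sum|B_{a,c}|$ along an $(a,c)$-chain.) A side benefit is quantitative: your counting really only needs $N\gtrsim \varepsilon^{-1}dn$ together with $d\gtrsim k/\varepsilon$, whereas the $\varepsilon^{-8}$ in the stated hypothesis arises precisely from the paper's nested averaging. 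The paper's approach, on the other hand, is modular --- it reuses a lemma already needed elsewhere in Section~\ref{sec:proof2Regular} rather than introducing a new extremal tool --- so each proof has its own merits.
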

\begin{proof}
We assume that the set $[N]$ is the vertex set of $\cH$.
A copy of $\cK_3$ is called a \emph{triangle}.
Let $u<v<w \in [N]$ be the vertices of a triangle $\cK$ in $\cH$.
The vertex $u$ is the \emph{leftmost} vertex of $\cK$ and $w$ is the \emph{rightmost} vertex of $\cK$.
We call the edges $\{u,v\}$ and $\{v,w\}$ the \emph{left leg} and the \emph{right leg} of $\cK$, respectively.
The \emph{length} of an edge $\{u,v\}$ of $\cH$ equals $|u-v|$.

First, there is a set $T$ of at least $\varepsilon N^3/2$ triangles from $\cH$ with the right leg of length at least $\varepsilon N /2$.
Otherwise the total number of copies of $\cK_3$ in $\cH$ is less than
\[\frac{\varepsilon N^3}{2} + \frac{\varepsilon N^2}{2}\cdot N= \varepsilon N^3,\]
as there are at most $\varepsilon N^2/2$ edges of length smaller than $\varepsilon N/2$ in $\cH$ and for each such edge there are at most $N$ choices for the leftmost vertex of a triangle in $\cH$.

For $i=2,\ldots,N-1$, let $T_i$ be the set of triangles from $T$ with the rightmost vertex in $\{i+1,\ldots,N\}$ and with the remaining two vertices in $\{1,\ldots,i\}$.
Then we have
\[\frac{\varepsilon^2N^4}{4}\leq \frac{\varepsilon N}{2} |T| \leq |\{(i,\cK)\colon \cK \in T_i, i=2,\ldots,N-1)\}|=\sum_{i=2}^{N-1}|T_i|,\]
where the second inequality follows from the fact that every $\cK \in T$ is contained in at least $\varepsilon N/2$ sets $T_i$.
It follows that there is a set $T_j$ of size at least $\varepsilon^2N^3/4$.

We let $A \colonequals \{1,\ldots,j\}$ and $B \colonequals \{j+1,\ldots,N\}$.
Every left leg of a triangle from $T_j$ is an edge of $\cH[A]$.
There is a set $S$ of at least $\varepsilon^2N^2/8$ edges of $\cH[A]$ such each edge from $S$ is the left leg of at least $\varepsilon^2N/4$ triangles from $T_j$.
Otherwise the total number of triangles in $T_j$ is less than
\[\frac{\varepsilon^2N^2}{8}\cdot N + \frac{N^2}{2}\cdot\frac{\varepsilon^2 N}{4} = \frac{\varepsilon^2N^3}{4}.\]

Let $\cR$ be the ordered graph with the vertex set $A$ and with the edge set~$S$.
We know that $\cR$ contains $|A|$ vertices and at least $\varepsilon^2N^2/8 \geq \varepsilon^2|A|^2/8$ edges.
Corollary~\ref{cor-rainbowMatch} implies that $\cR$ contains a copy $\cM$ of $\cM_{2N'}$ for an integer $N' \colonequals \varepsilon^2|A|/16$.
Since $\binom{|A|}{2} \geq \varepsilon^2N^2/8$, we obtain $|A| \geq \varepsilon N/2$ and $N'\geq \varepsilon^3N/2^5$.

We use $u_1 < \cdots < u_{N'}$ and $v_1 < \cdots < v_{N'}$ to denote the vertices of the left and the right color class of $\cM$, respectively.
Let $B'$ be the set $\{d\cdot i \colon i \in [N/d], I_i \cap B \neq \emptyset\}$.
The set $B'$ contains at most $\lceil|B|/d\rceil \leq N/d$ elements and is disjoint with $A$.

Let $\cR'$ be the ordered graph with the vertex set $\{v_1,\ldots,v_{N'}\} \cup B'$ and place an edge $\{v_i,d\cdot l\}$ in $\cR'$ if there are at least $k$ triangles in $\cH$ with the vertex set $\{u_i,v_i,w\}$ for some $w \in I_l\cap B$.

Since every edge $\{u_i,v_i\}$ of $\cM$ is the left leg of at least $\varepsilon^2N/4$ triangles from~$T_j$, every vertex $v_i$ of $\cR'$ has degree at least 
\[\left(\frac{\varepsilon^2 N}{4}-k|B'|\right)\frac{1}{d} \geq \left(\frac{\varepsilon^2 N}{4}-\frac{kN}{d}\right)\frac{1}{d}\geq \frac{\varepsilon^2N}{8d}\]
 in $\cR'$.
The last inequality follows from $d \geq 8\varepsilon^{-2}k$.

The number $|V(\cR')|$ of vertices of $\cR'$ trivially satisfies $\varepsilon^3N/2^5\leq N'\leq |V(\cR')| \leq N$.
Thus $\cR'$ contains at least 
\[\frac{\varepsilon^3N}{2^5}\cdot \frac{\varepsilon^2 N}{8d} = \frac{\varepsilon^5 N^2}{2^8d}\geq \frac{\varepsilon^5}{2^8d}|V(\cR')|^2\] edges.
By Corollary~\ref{cor-rainbowMatch}, there is a copy $\cM'$ of $\cM_{2n}$ in $\cR'$, since $|V(\cR')| \geq \varepsilon^3 N/2^5 \geq 2n/(\varepsilon^5/2^8d)$. 
It follows from the constructions of $\cR$ and $\cR'$ that there is a copy of $\cT^k_n$ in $\cH$ as an ordered subgraph respecting the partitioning $I_1,\ldots,I_{N/d}$.
\end{proof}

Let $k$ and $n$ be positive integers.
By combining the ordered graphs $\cP^k_n$ and $\cT^k_n$, we obtain an ordered graph that is used later to embed orderings of  2-regular graphs.

Let $B_1,\ldots,B_n$ be the blocks of $\cT^k_n$ and let $\cP^k_n$ be the $k$-blow-up of $\cP_n$ with blocks $B_1,\ldots,B_n$.
The ordered graph $\cF^k_n$ is the union of $\cT^k_n$ and $\cP^k_n$; see part~b) of Figure~\ref{fig-prefixGraph}.
The intervals $B_1,\ldots,B_n$ are the \emph{blocks of $\cF^k_n$}.
Observe that we have $\cF^k_n \subseteq \cF^{k'}_{n'}$ for every $k \leq k'$ and $n \leq n'$.

\begin{lemma}
\label{densityTriangles}
For every $\varepsilon > 0$, there is a constant $\delta=\delta(\varepsilon)>0$ such that every ordered graph with $n$ vertices and with at least $(1/4+\varepsilon)n^2$ edges contains at least $\delta n^3$ copies of $\cK_3$. 
\end{lemma}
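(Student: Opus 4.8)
The statement is a supersaturation companion to Mantel's theorem, and the ordering is a red herring. Since $K_3$ has a unique vertex ordering up to isomorphism, a copy of $\cK_3$ in an ordered graph $\cG=(G,\prec)$ is exactly a triangle of the underlying graph $G$, and the number of such copies equals the number of triangles of $G$. So it suffices to prove: for every $\varepsilon>0$ there is $\delta=\delta(\varepsilon)>0$ such that every graph $G$ on $n$ vertices with at least $(1/4+\varepsilon)n^2$ edges has at least $\delta n^3$ triangles. The plan is to prove this directly by a short double-counting argument, with a single application of the Cauchy--Schwarz inequality, rather than invoking the triangle removal lemma (which would work but yields only an ineffective $\delta$).

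Writing $m\colonequals|E(G)|\ge(1/4+\varepsilon)n^2$ and letting $t(G)$ denote the number of triangles, I would first bound the number of triangles through a fixed edge $\{u,v\}$ from below by $|N(u)\cap N(v)|\ge d(u)+d(v)-n$, which holds because $N(u)$ and $N(v)$ are subsets of the $n$-element vertex set. Summing this over all edges and using that each triangle is counted three times, together with $\sum_{\{u,v\}\in E(G)}(d(u)+d(v))=\sum_{v}d(v)^2$, gives
\[
3\,t(G)\;\ge\;\sum_{v\in V(G)}d(v)^2\;-\;mn.
\]
Then Cauchy--Schwarz yields $\sum_v d(v)^2\ge\frac1n\bigl(\sum_v d(v)\bigr)^2=\frac{4m^2}{n}$, so
\[
3\,t(G)\;\ge\;\frac{4m^2}{n}-mn\;=\;\frac{m(4m-n^2)}{n}.
\]

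To finish, I would plug in the hypothesis: $m\ge(1/4+\varepsilon)n^2$ gives $4m-n^2\ge 4\varepsilon n^2$ and $m\ge n^2/4$, so $3\,t(G)\ge\frac{(n^2/4)(4\varepsilon n^2)}{n}=\varepsilon n^3$, i.e.\ $t(G)\ge\varepsilon n^3/3$, and the lemma holds with $\delta\colonequals\varepsilon/3$. There is no real obstacle in this argument; the only thing to get right is the inclusion--exclusion estimate $|N(u)\cap N(v)|\ge d(u)+d(v)-n$ and the order in which the sums are manipulated. (If one wanted a cleaner dependence on $\varepsilon$ one could instead quote the Kruskal--Katona/Razborov-type bound on triangle density, but the elementary estimate above already gives a linear-in-$\varepsilon$ constant, which is more than enough for the application.)
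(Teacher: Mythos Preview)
Your argument is correct. The reduction to unordered triangles is valid because $\cK_3$ has only one ordering up to isomorphism, the inclusion--exclusion bound $|N(u)\cap N(v)|\ge d(u)+d(v)-n$ and the identity $\sum_{\{u,v\}\in E}(d(u)+d(v))=\sum_v d(v)^2$ are standard, and the Cauchy--Schwarz step yields the classical Goodman/Moon--Moser inequality $3\,t(G)\ge m(4m-n^2)/n$. Plugging in $m\ge(1/4+\varepsilon)n^2$ gives $t(G)\ge \varepsilon n^3/3$, so $\delta=\varepsilon/3$ works.

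This is a genuinely different route from the paper. The paper proves the lemma by combining the Triangle Removal Lemma with Tur\'an's (Mantel's) theorem: if the graph had fewer than $\delta n^3$ triangles, one could delete at most $\varepsilon n^2$ edges to make it triangle-free, leaving more than $n^2/4$ edges, a contradiction. Your approach is more elementary and yields an explicit constant $\delta=\varepsilon/3$, whereas the removal-lemma argument only gives an ineffective (tower-type) $\delta(\varepsilon)$. For the application in the paper (where one sets $\delta\colonequals\delta(1/8)$ and then carries powers of $\delta^{-1}$ through the bound $N=2^{163}\delta^{-10}n$), your explicit $\delta=1/24$ would actually produce a concrete, and much smaller, linear constant. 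The paper's approach, by contrast, is shorter to state if one is willing to invoke the removal lemma as a black box.
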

\begin{proof}
The Triangle Removal Lemma~\cite{ruzSze78} states that for every $\varepsilon' > 0$ there is a $\delta'=\delta'(\varepsilon')>0$ such that every graph on $n$ vertices with at most $\delta'n^3$ copies of $K_3$ can be made $K_3$-free by removing at most $\varepsilon' n^2$ edges.

For a given $\varepsilon>0$, let $\delta>0$ be the parameter $\delta'(\varepsilon)$.
Let $H$ be the underlying graph in the given ordered graph on $n$ vertices with at least $(1/4+\varepsilon)n^2$ edges.
Suppose for contradiction that $H$ contains fewer than $\delta n^3$ copies of $K_3$.
By the Triangle Removal Lemma, we delete fewer than $\varepsilon n^ 2$ edges of $H$ and obtain a $K_3$-free graph with more than $(1/4+\varepsilon)n^2-\varepsilon n^2=n^2/4$ edges.
However, this contradicts Tur\'{a}n's theorem~\cite{turan41}.

Thus $H$ contains at least $\delta n^3$ copies of $K_3$.
Since all orderings of complete graphs are isomorphic, the statement follows.
\end{proof}

Having all auxiliary results, we can now prove Theorem~\ref{thm-2Regular}.

\begin{proof}[Proof of Theorem~\ref{thm-2Regular}]
Again, it is sufficient to prove the statement for 2-regular graphs, as every graph on $n$ vertices with maximum degree 2 is a subgraph of a 2-regular graph on at most $3n$ vertices.

Let $G$ be a given 2-regular graph consisting of cycles $C_{n_1},\ldots,C_{n_m}$ with $n_1,\ldots,n_m \geq 3$ and $n_1 + \cdots + n_m = n$.
First, we order each cycle $C_{n_i}$ as the alternating cycle $\cC_{n_i}$.
For every $i=1,\ldots,m$, let $v^i_1,\ldots,v^i_{n_i}$ be the vertices of $\cC_{n_i}$ in this order.

For $i=2,\ldots,m$, the ordering $\cG$ of $G$ is then constructed iteratively as follows.
For $n_i$ even, we place the vertex set of $\cC_{n_i}$ between the vertices of the inner edge of $\cC_{n_{i-1}}$ (if $n_{i-1}=3$, then we place the vertex set of $\cC_{n_i}$ to the right of $\cC_{n_{i-1}}$).
For $n_i$ odd, we place the outer edge of $\cC_{n_i}$ between the vertices of the outer edge of a previous odd cycle $\cC_{n_j}$.
If $\cC_{n_i}$ is the first odd cycle in the process, then we let $v^i_1$ and $v^i_2$ be the first two vertices in the ordering.
Then we place the vertices $v^i_3,\ldots,v^i_{n_i}$ between the vertices of the inner edge of $\cC_{n_{i-1}}$ (if $n_{i-1}=3$, then we place $v^i_3,\ldots,v^i_{n_i}$ to the right of $\cC_{n_{i-1}}$); see Figure~\ref{fig-2RegOrder}, for an illustration.

\begin{figure}[ht]
\centering
\includegraphics{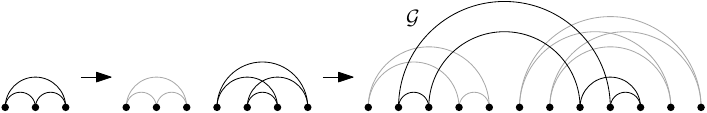}
\caption{The iterative construction of the ordering $\cG$ of a 2-regular graph $G$ with para\-meters $m=3$, $n_1=3$, $n_2=4$, and $n_3=5$.}
\label{fig-2RegOrder}
\end{figure}

The ordering $\cG$ of $G$ is an ordered subgraph of $\cF^2_n$.
For $i=1,\ldots,m$, it suffices to greedily map $v^i_1,\ldots,v^i_{n_i}$ for $n_i$ even and $v^i_3,\ldots,v^i_{n_i}$ for $n_i$ odd into blocks of $\cF^2_n$ such that every block of $\cC_{n_i}$ is contained in a block of $\cF^2_n$.
Then, for every odd $n_i$, we map $v^i_1$ and $v^i_2$ to an edge $\{j,n+1-j\}$ of $\cF^2_n$ such that $j$ and $n+1-j$ are adjacent to vertices of the block of $\cF^2_n$ that contains the first block of $\cC_{n_i}$.
Note that we might not even use all the blocks of $\cF^2_n$ in the process; see Figure~\ref{fig-altCycEmbed} for an illustration.

\begin{figure}[ht]
\centering
\includegraphics{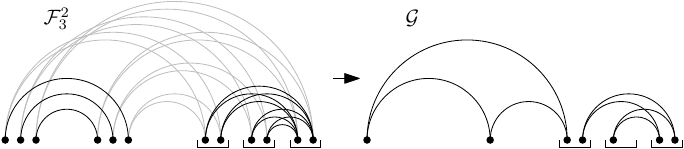}
\caption{The ordered graph $\cF^2_3$ contains the ordering $\cG$ of a 2-regular graph consisting of $\cC_3$ and $\cC_4$.}
\label{fig-altCycEmbed}
\end{figure}

Let $\delta\in(0,1)$ be the parameter $\delta(1/8)$ from Lemma~\ref{densityTriangles}.
Let $N \colonequals 2^{163}\delta^{-10}n$ and let $\chi$ be a coloring of $\cK_N$.
We show that $\chi$ contains a monochromatic copy of $\cF^2_n$.
The rest then follows, as $\cG$ is an ordered subgraph of $\cF^2_n$.

Since $\R(K_3)=6$, every six-tuple of vertices of $\cK_N$ induces a coloring of $\cK_6$ that contains a monochromatic copy of $\cK_3$.
Every such copy is contained in $\binom{N-3}{3}$ six-tuples of vertices of $\cK_N$.
This gives us at least $\binom{N}{6}/\binom{N-3}{3} = \frac{N(N-1)(N-2)}{120}$ monochromatic copies of $\cK_3$ in~$\chi$.
Without loss of generality, we assume that at least half of them are red.
Therefore we have at least $\frac{N(N-1)(N-2)}{240}>N^3/2^8$ red copies of $\cK_3$ in $\chi$. 

We let $N_1 \colonequals 2^{54}\delta^{-8}n$, $\varepsilon_1 \colonequals 1/2^8$, $k_1 \colonequals 2^{12}\delta^{-2}$, and $d_1 \colonequals2^{31}\delta^{-2}$ and apply Lemma~\ref{lem-prefixGraph} with parameters $\varepsilon_1$, $k_1$, and $d_1$ for the ordered subgraph of $\cK_N$ that is formed by red edges in~$\chi$.
Since there are at least $\varepsilon_1 N^3$ red copies of $\cK_3$ in~$\chi$, $d_1 \geq 8\varepsilon_1^{-2}k_1$, and $N \geq 2^{14}\varepsilon_1^{-8}d_1N_1$, this gives a red copy of $\cT^{k_1}_{N_1}$ in $\chi$. 
Let $B_1,\ldots,B_{N_1}$ be the blocks of this red copy of $\cT^{k_1}_{N_1}$ and let $\cH$ be the induced ordered subgraph $\cK_N[B_1 \cup \cdots \cup B_{N_1}]$ of $\cK_N$  with the red-blue coloring induced by $\chi$.
Note that the sets $B_1,\ldots,B_{N_1}$ form an order-obeying sequence of intervals of size $k_1$ that partition the vertex set of $\cH$.
In particular, $\cH$ has $k_1N_1$ vertices.

Now we distinguish two cases.
First, we assume that at least $\binom{k_1N_1}{2}-3k_1^2N_1^2/8>k_1^2N_1^2/2^4$ edges of $\cH$ are red in~$\chi$.
Then we apply Lemma~\ref{lem-TuranBlowUp} with $\varepsilon_2 \colonequals 1/2^4$, $k_2 \colonequals 2$, and $d_2 \colonequals k_1$ for the red ordered subgraph of $\cH$ that is partitioned into $B_1,\ldots,B_{N_1}$.
Since $d_2=k_1 =2^{12}\delta^{-2}> k_2(2/\varepsilon_2)^{k_2}$, $k_1N_1 = 2^{66}\delta^{-10}n > 2d_2^{2k_2+1}\varepsilon_2^{-1} n$, and $\cH$ has at least $\varepsilon_2(k_1N_1)^2$ edges, we obtain a red copy of $\cP^2_n$ as an ordered subgraph of $\cH$ respecting the partitioning $B_1,\ldots,B_{N_1}$. 
Together with the red copy of $\cT^{k_1}_{N_1}$ in $\chi$, this gives a red copy of $\cF^2_n$ in $\cH$.

Otherwise there are more than $3k_1^2N_1^2/8$ blue edges of $\cH$ in $\chi$.
By Lemma~\ref{densityTriangles}, the ordered graph $\cH$ contains at least $\delta k_1^3N_1^3$ blue copies of $\cK_3$.
We apply Lemma~\ref{lem-prefixGraph} with $\varepsilon_3 \colonequals \delta$, $k_3 \colonequals 2^9$, $d_3 \colonequals k_1$, and $N_3 \colonequals 2^{40}n$ for the blue ordered subgraph of $\cH$ that is partitioned into $B_1,\ldots,B_{N_1}$.
Since $d_3=k_1 \geq 8\varepsilon_3^{-2}k_3$ and $k_1N_1 = 2^{66}\delta^{-10} \geq 2^{14}\varepsilon_3^{-8}d_3N_3$, this gives us a blue copy of $\cT^{k_3}_{N_3}$ as an ordered subgraph of $\cH$ respecting the partitioning $B_1,\ldots,B_{N_1}$. 
Let $B'_1,\ldots,B'_{N_3}$ be the blocks of this copy of $\cT^{k_3}_{d_3}$ and let $\cH'$ be the ordered graph with the vertex set $B'_1\cup\cdots\cup B'_{N_3}$ formed by edges of the more frequent color in the coloring $\chi$ of $\cH[B'_1\cup\cdots\cup B'_{N_3}]$.
Again, observe that $B'_1,\ldots,B'_{N_3}$ is an order-obeying sequence of intervals of size $k_3$ that partition the vertex set of $\cH'$.

The ordered graph $\cH'$ has $k_3N_3$ vertices and at least $\binom{k_3N_3}{2}/2>k_3^2N_3^2/8$ edges.
As the last step, we apply Lemma~\ref{lem-TuranBlowUp} with $\varepsilon_4 \colonequals 1/8$, $k_4 \colonequals 2$, $d_4 \colonequals k_3$ for $\cH'$ partitioned into $B'_1,\ldots, B'_{N_3}$.
Since $d_4=k_3 = 2^9 = k_4(2/\varepsilon_4)^{k_4}$ and $k_3N_3 = 2^{49}n\geq 2d_4^{2k_4+1}\varepsilon_4^{-1}n$, we obtain a monochromatic copy of $\cP^2_n$ as an ordered subgraph of $\cH'$ respecting the partitioning $B'_1,\ldots, B'_{N_3}$.  
Together with either the red copy $\cT^{k_1}_{N_1}$ of or with the blue copy of $\cT^{k_3}_{N_3}$, we obtain a monochromatic copy of $\cF^2_n$ in $\chi$.
\end{proof}

\section{A lower bound for ordered matchings with interval chromatic number two}
\label{sec:proofImprovedLowerBoundMatch}

Here we prove Theorem~\ref{thm-improvedLowerBoundMatch}.
That is, we show an improved lower bound on the ordered Ramsey numbers of ordered matchings with interval chromatic number two.

Let $n$ be a positive integer and let $\pi$ be a permutation chosen independently uniformly at random from the set of all permutations of $[n]$.
We recall that the random ordered $n$-matching $\cM(\pi)$ is the ordered matching with the vertex set $[2n]$ and with edges $\{i,n+\pi(i)\}$ for every $i \in [n]$.

\begin{lemma}
\label{lem-matchRandomPermutation}
Let $d$, $n$, $r$, $S$ be positive integers and let $X_1,\ldots,X_d\subseteq [n]$ and $Y_1,\ldots,Y_d \subseteq [2n]\setminus[n]$ be two collections of pairwise disjoint sets such that  $|X_1|\geq \cdots \geq |X_d|$, $|Y_1|\geq \cdots \geq |Y_d|$, and $|X_d||Y_d|\geq S$.
Let $T$ be a set of $r$ pairs $(X_i,Y_j)$ with $1 \leq i,j \leq d$.
Then the probability that we have $e_{\cM(\pi)}(X_i,Y_j)=0$ for every $(X_i,Y_j) \in T$ is less than
\[e^{-\frac{S}{n}\left\lfloor(3d-\sqrt{9d^2-8r})/4\right\rfloor^2}.\]
\end{lemma}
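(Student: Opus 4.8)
My plan is to discard all but a carefully chosen sub-collection of the pairs in $T$, and then to estimate the probability by revealing the images $\sigma(X_i)$ one block at a time.

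Set $Y_j'\colonequals\{y-n\colon y\in Y_j\}\subseteq[n]$, let $N(i)\colonequals\{j\colon(X_i,Y_j)\in T\}$, and put $Z_i\colonequals\bigcup_{j\in N(i)}Y_j'$; as the $Y_j'$ are pairwise disjoint with $|Y_j'|\ge|Y_d|$, we have $|Z_i|\ge|N(i)|\cdot|Y_d|$. Only the restriction $\sigma\colonequals\pi|_X$ with $X\colonequals\bigcup_i X_i$ matters, and $\sigma$ is a uniform random injection $X\hookrightarrow[n]$; the event ``$e_{\cM(\pi)}(X_i,Y_j)=0$ for all $(X_i,Y_j)\in T$'' coincides with ``$\sigma(X_i)\cap Z_i=\emptyset$ for all $i\in[d]$''. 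Since $\pi$ and $\pi^{-1}$ are equidistributed and swapping $\pi\leftrightarrow\pi^{-1}$ interchanges the roles of the $X_i$ and the $Y_j$, we may assume $|X_d|\le|Y_d|$.

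The first step is the estimate: for a set $R=\{i_1,\dots,i_m\}\subseteq[d]$ listed so that $|Z_{i_1}|\ge\cdots\ge|Z_{i_m}|$ and satisfying $|Z_{i_s}|>P_{s-1}\colonequals|X_{i_1}|+\cdots+|X_{i_{s-1}}|$ for all $s$,
\[
\Pr\bigl[\sigma(X_i)\cap Z_i=\emptyset\ \text{for all}\ i\in R\bigr]\ \le\ \exp\!\Bigl(-\tfrac1n\bigl(\textstyle\sum_{i\in R}|X_i|\,|Z_i|-\tfrac12(\sum_{i\in R}|X_i|)^2\bigr)\Bigr).
\]
One proves this by exposing $\sigma(X_{i_1}),\sigma(X_{i_2}),\dots$ successively: conditioned on the earlier blocks, $\sigma(X_{i_s})$ is a uniform $|X_{i_s}|$-element subset of the $n-P_{s-1}$ unused values, at least $|Z_{i_s}|-P_{s-1}$ of which lie in $Z_{i_s}$, so the conditional probability of missing $Z_{i_s}$ is at most $(1-(|Z_{i_s}|-P_{s-1})/n)^{|X_{i_s}|}\le\exp(-|X_{i_s}|(|Z_{i_s}|-P_{s-1})/n)$; multiplying these and using $\sum_s|X_{i_s}|(|Z_{i_s}|-P_{s-1})=\sum_{i\in R}|X_i|\,|Z_i|-\tfrac12(\sum_{i\in R}|X_i|)^2+\tfrac12\sum_{i\in R}|X_i|$ gives the bound. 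Because the full event implies the one in the display for every admissible $R$, it remains to produce one admissible $R$ whose exponent is at least $\tfrac{S}{n}k^2$, where $k=\lfloor(3d-\sqrt{9d^2-8r})/4\rfloor$; note that $r\le d^2$ forces $\sqrt{9d^2-8r}\ge d$, hence $k\le d/2$, and that $k$ is the largest integer with $3dk-2k^2\le r$, i.e.\ $r\ge 3dk-2k^2$. If $k=0$ the asserted bound is $1$ and there is nothing to prove.

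The remaining, and main, difficulty is this selection together with the estimate of the exponent. Using $|X_i|\ge|X_d|$, $|Z_i|\ge|N(i)|\,|Y_d|$, $|X_d|\,|Y_d|\ge S$, and $|X_d|\le|Y_d|$ (so $|X_d|^2\le|X_d|\,|Y_d|$, which absorbs the quadratic correction), the exponent for the set $R$ of the $s$ highest-degree blocks, all of size close to $|X_d|$, is at least $S\bigl(\sum_{t=1}^{s}\delta_t-\tfrac12 s^2\bigr)$, where $\delta_1\ge\delta_2\ge\cdots$ is the non-increasing rearrangement of the degrees $|N(i)|$. Taking $s\colonequals\lfloor r/d\rfloor$, using $\sum_{t\le s}\delta_t\ge\frac sd\,r$ and $r\ge 3dk-2k^2$ with $k\le d/2$ (so $r/d\ge 3k-2k^2/d\ge 2k$, whence $s\ge 2k$ and $r/(2d)\ge k$), one gets $\sum_{t\le s}\delta_t-\tfrac12 s^2\ge s(r/d-s/2)\ge s\cdot r/(2d)\ge 2k^2\ge k^2$, more than enough; here the choice of $k$ is exactly what makes the threshold work. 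The work that remains — and where I expect the real effort to lie — is to make this rigorous for arbitrary block sizes: one reduces to the equal-size case by a convexity argument on the concave quadratic $\sum_{i\in R}|X_i|\,|Z_i|-\tfrac12(\sum_{i\in R}|X_i|)^2$, chooses $R$ to be (roughly) the $\lfloor r/d\rfloor$ blocks of largest degree after discarding those that violate the admissibility condition $|Z_{i_s}|>P_{s-1}$ (discarding only helps, since such a block has small $|Z|$ and contributes little), and separately disposes of the degenerate regimes — for instance when some $X_i$ is so large that $\sigma(X_i)$ cannot avoid $Z_i$ at all, in which case the probability is already $0$ — so that one still lands precisely on $3dk-2k^2$ and, after solving the quadratic for $k$, on the stated bound.
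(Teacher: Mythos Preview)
Your framework is sound, but you have turned a short proof into a long and incomplete one by missing a one-line reduction that the paper makes at the outset. Since $|X_i|\ge|X_d|$ and $|Y_j|\ge|Y_d|$ for every $i,j$, one may simply replace each $X_i$ by an arbitrary subset of size $|X_d|$ and each $Y_j$ by a subset of size $|Y_d|$: shrinking the sets can only \emph{increase} the probability that $\cM(\pi)$ avoids every pair in $T$, so the reduction is free. After it, all $X_i$ have common size $x=|X_d|$, all $Y_j$ have common size $y=|Y_d|$, and $xy\ge S$. Everything you describe as ``where I expect the real effort to lie'' --- the convexity argument for arbitrary block sizes, the admissibility constraint, the degenerate regimes --- disappears, and the proposal as written is incomplete precisely because this effort is never carried out.

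With equal sizes in hand, the paper's selection step is also cleaner than yours. Instead of taking the $\lfloor r/d\rfloor$ highest-degree blocks and manipulating $\sum_{t\le s}\delta_t\ge sr/d$, the paper observes that with $z\colonequals\lfloor(3d-\sqrt{9d^2-8r})/4\rfloor$ there must exist $z$ indices $i$ each having $|N(i)|\ge 2z$; otherwise the total number of pairs in $T$ is below $zd+(d-z)\cdot 2z=3dz-2z^2\le r$, a contradiction. Revealing $\pi$ on these $z$ blocks $X_{i_1},\dots,X_{i_z}$ element by element then gives
\[
P\ \le\ \prod_{k=1}^{z}\left(\frac{n-|N_k|y}{n-kx}\right)^{x}\ \le\ \left(\frac{n-2zy}{n-zy}\right)^{zx}\ \le\ \left(1-\frac{zy}{n}\right)^{zx}<e^{-xyz^2/n}\le e^{-Sz^2/n},
\]
using $x\le y$ in the second inequality. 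Note that your admissibility condition is automatic here, since every forbidden set has size at least $2zy>zy\ge zx\ge P_{s-1}$.

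Two small slips in your write-up: in the telescoping you wrote $+\tfrac12\sum_{i\in R}|X_i|$ where it should be $+\tfrac12\sum_{i\in R}|X_i|^2$ (harmless, since you immediately drop this positive term); and your description of $k$ as ``the largest integer with $3dk-2k^2\le r$'' is not literally true, as all sufficiently large integers also satisfy that inequality --- you mean the largest such integer not exceeding $3d/4$.
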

\begin{proof}
We may assume without loss of generality that $|X_1|=\cdots=|X_d|$ and $|Y_1|=\cdots=|Y_d|$, since removing elements from the sets $X_i$ and $Y_j$ does not decrease the probability.
Let $x \colonequals |X_1|=\cdots=|X_d|$ and $y\colonequals |Y_1|=\cdots=|Y_d|$.
From symmetry, we may assume $x \leq y$.

We estimate the probability $P$ that the random $n$-permutation $\pi$ maps no element $e$ from $[n]$ to $\pi(e)$ such that $(e,n+\pi(e)) \in (X_i,Y_j)$ for $(X_i,Y_j) \in T$.
Assume that the elements of~$[n]$ are in some total order $e_1,\ldots,e_n$ in which the values $\pi(e_1),\ldots,\pi(e_n) \in [n]$ are assigned.
The probability $P$ is then at most $\prod_{k=1}^n \min\{\frac{n-|F_{e_k}|}{n-k+1},1\}$, where $F_{e_k} \subseteq [2n]\setminus [n]$ is a set of elements $f$ such that $e_k \in X_i$, $f \in Y_j$ for $(X_i,Y_j) \in T$.

There is a set $Z$ of $z \colonequals \lfloor (3d-\sqrt{9d^2-8r})/4 \rfloor$ indices from~$[d]$ such that for every $i \in Z$ there are at least $2z$ sets $Y_j$ with $(X_i,Y_j) \in T$.
Otherwise $z < \lfloor (3d-\sqrt{9d^2-8r})/4 \rfloor$ and there are fewer than $z d+(d-z)2z \leq r$ pairs in~$T$.
Let $i_1,\ldots, i_z$ be the elements of $Z$.

For every $k=1,\ldots,z$, let $N_k$ be the set of indices $j$ from $[d]$ such that $(X_{i_k},Y_j) \in T$.
Since $\cup_{j\in N_k}Y_j = F_e$ for every $e \in X_{i_k}$, the probability $P$ is at most 
\[\prod_{k=1}^z \left(\frac{n-\sum_{j \in N_k} |Y_j|}{n-\sum_{l = 1}^k|X_{i_l}|}\right)^{|X_{i_k}|} = \prod_{k=1}^z \left(\frac{n-|N_k|y}{n-k x}\right)^x.\]
The denominators are positive, as $x\leq n/d$ and $z<d$.

Since $x \leq y$ and $|N_k| \geq 2z$ for every $k \in [z]$, the last term is at most
\[\left(\frac{n-2z y}{n-z y}\right)^{zx} \leq \left(\frac{n-z y}{n}\right)^{zx} 
= \left(1 - \frac{z y}{n}\right)^{zx} < e^{-xyz^2/n}.\]
Since $xy \geq S$, the probability $P$ is less than
\[e^{-S z^2/n}= e^{-\frac{S}{n}\left\lfloor (3d-\sqrt{9d^2-8r})/4\right\rfloor^2}.\qedhere\]
\end{proof}

In the rest of the section, we set $d \colonequals 3\log{n}$, $S \colonequals 2\cdot 10^4n$, and $r\colonequals\log^2{n}/4$.

\begin{lemma}
\label{lem-matchLargeIntervals}
The random ordered $n$-matching $\cM(\pi)$ satisfies the following statement asymptotically almost surely: for all collections $I_1,\ldots,I_d \subseteq [n]$ and $J_1,\ldots,J_d \subseteq [2n] \setminus [n]$ of pairwise disjoint intervals satisfying $|I_1|\geq \cdots \geq |I_d|$, $|J_1|\geq \cdots \geq |J_d|$, and $|I_d||J_d| \geq S$, the number of pairs $(I_i,J_j)$ with $1\leq i,j\leq d$ and $e_{\cM(\pi)}(I_i,J_j)>0$ is larger than $d^2-r$.
\end{lemma}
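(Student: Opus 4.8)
The plan is to prove Lemma~\ref{lem-matchLargeIntervals} via a union bound over all ``bad'' configurations, combining the tail estimate from Lemma~\ref{lem-matchRandomPermutation} with a counting bound on the number of relevant interval systems. Fix the parameters $d=3\log n$, $S=2\cdot10^4 n$, and $r=\log^2 n/4$ as stipulated. A configuration consists of a choice of disjoint intervals $I_1,\dots,I_d\subseteq[n]$ and $J_1,\dots,J_d\subseteq[2n]\setminus[n]$ (sorted by nonincreasing size, with $|I_d||J_d|\ge S$) together with a set $T$ of at most $d^2-r$ pairs $(I_i,J_j)$ which we ``hope'' carries \emph{all} the edges of $\cM(\pi)$ between the $I$'s and the $J$'s; equivalently, we want $e_{\cM(\pi)}(I_i,J_j)=0$ for every pair $(I_i,J_j)$ \emph{not} in $T$, a family of at least $r$ pairs. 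For a fixed configuration, Lemma~\ref{lem-matchRandomPermutation} (applied to the complement family of $\ge r$ forbidden pairs, after discarding pairs beyond exactly $r$ of them) bounds the failure probability by $\exp\bigl(-\tfrac{S}{n}\lfloor(3d-\sqrt{9d^2-8r})/4\rfloor^2\bigr)$.

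First I would estimate the exponent. Since $r=\log^2 n/4=d^2/36$, we have $9d^2-8r=9d^2-2d^2/9=d^2(81-2)/9$, so $\sqrt{9d^2-8r}=d\sqrt{79}/3$, and $(3d-\sqrt{9d^2-8r})/4=d(3-\sqrt{79}/3)/4=d(9-\sqrt{79})/12$. Since $\sqrt{79}\approx 8.888$, this is roughly $d\cdot 0.00926$, i.e.\ a constant fraction $c_0 d$ of $d$ with $c_0=(9-\sqrt{79})/12>0$; more simply one can just note it is at least, say, $d/120$ for $n$ large. Hence $\lfloor\cdot\rfloor^2\ge (d/120-1)^2\ge d^2/2^{15}$ for $n$ large, and the per-configuration failure probability is at most $\exp(-\tfrac{S}{n}\cdot d^2/2^{15})=\exp(-2\cdot10^4\cdot 9\log^2 n/2^{15})$. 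Choosing the constants so that $2\cdot10^4\cdot 9/2^{15}>5$ (indeed $180000/32768>5$) gives a bound of $\exp(-5\log^2 n)=n^{-5\log n}$, which is superpolynomially small.

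Next I would count configurations. An interval in $[n]$ is determined by its two endpoints, so there are at most $n^2$ intervals; choosing $d$ disjoint ones (ordered after sorting, so unordered is enough) costs at most $\binom{n^2}{d}\le n^{2d}$, and likewise at most $(2n)^{2d}\le n^{3d}$ for the $J$'s (absorbing the factor $2^{2d}$ into $n^{3d}$ for $n$ large). The set $T$ of at most $d^2-r\le d^2$ pairs out of $d^2$ possible pairs is chosen in at most $2^{d^2}$ ways. So the total number of configurations is at most $n^{2d}\cdot n^{3d}\cdot 2^{d^2}=n^{5d}\cdot 2^{d^2}=n^{15\log n}\cdot 2^{9\log^2 n}=2^{15\log^2 n}\cdot 2^{9\log^2 n}=2^{24\log^2 n}=n^{24\log n}$. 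By the union bound, the probability that \emph{some} configuration is bad is at most $n^{24\log n}\cdot n^{-5\log n}$, which does \emph{not} go to zero — so the constants as given are not quite enough with this crude count, and the real work is to be a little more careful. The fix is that in applying Lemma~\ref{lem-matchRandomPermutation} we are free to keep \emph{all} forbidden pairs, not just $r$ of them; with $r$ replaced by the true number $r'\ge d^2-|T|$ of forbidden pairs, and since we only care about configurations where $|T|\le d^2-r$, we always have $r'\ge r$, and one checks $\lfloor(3d-\sqrt{9d^2-8r'})/4\rfloor$ is nondecreasing in $r'$ up to $r'\le 9d^2/8$; so using $r'$ in place of $r$ only helps. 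More importantly, the exponent $\tfrac{S}{n}z^2$ with $S=2\cdot10^4 n$ gives $2\cdot10^4 z^2$, and $z\ge d/120\ge \log n/40$ for $n$ large gives exponent $\ge 2\cdot10^4\log^2 n/1600=12.5\log^2 n$, hence per-configuration probability $\le e^{-12.5\log^2 n}\le 2^{-18\log^2 n}=n^{-18\log n}$ (using $e^{-12.5}\approx 2^{-18}$); then $n^{24\log n}\cdot n^{-18\log n}\to 0$ still fails. The genuinely correct route, which the constants are clearly chosen for, is to bound $z$ better: $z=\lfloor d(9-\sqrt{79})/12\rfloor$ with $(9-\sqrt{79})/12>1/120$ is too lossy, but $z\ge c_0 d$ with $c_0=(9-\sqrt{79})/12$ exactly and $d=3\log n$ gives $z^2\ge c_0^2\cdot 9\log^2 n$, and $\tfrac{S}{n}z^2=2\cdot10^4\cdot 9 c_0^2\log^2 n$; with $c_0\approx 0.00926$ this is $\approx 15.4\log^2 n$ — so per-configuration probability $\le e^{-15\log^2 n}$, and after the union bound over $2^{24\log^2 n}=e^{16.6\log^2 n}$ configurations we are still stuck.

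The main obstacle, then, is the counting: the naive count $n^{5d}2^{d^2}$ of configurations is too large to be beaten by the tail bound with these parameters. I expect the actual proof to avoid counting all interval systems by instead \emph{fixing a grid}: one does not range over all collections of disjoint intervals, but observes that it suffices to prove the statement for intervals whose endpoints lie on a coarse grid of $O(\log n)$ or $O(n/S)$ points (since any interval system with $|I_d||J_d|\ge S$ contains a sub-system supported on such a grid with only a constant-factor loss, using $S=\Theta(n)$ so the grid has $O(1)$ cells per unit — more precisely a grid of $\Theta(\sqrt{n/S}\cdot n)=\Theta(n)$... ). The clean version: since the $I_i$ are disjoint and we only need $|I_d|,|J_d|$ large, we may round each interval to one of at most $O(n^2/S)$ ``canonical'' intervals (those with endpoints multiples of $\sqrt S$, say), and the number of $d$-subsets of a universe of size $O(n^2/S)=O(n)$ is still $n^{O(\log n)}$ — so this alone does not help either. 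Therefore I expect the paper's argument to hinge on a sharper observation, likely that one does not need a union bound over $T$ at all: instead, fix the interval system, and directly bound the probability that \emph{more than $r$} of the $d^2$ pairs have zero edges; this is a single event per interval system, and by a union bound over the $\binom{d^2}{r}\le 2^{d^2}$ choices of which $\ge r$ pairs are empty combined with Lemma~\ref{lem-matchRandomPermutation}, together with the count of interval systems, one needs $2^{d^2}\cdot(\#\text{interval systems})\cdot e^{-Sz^2/n}\to 0$. So the crux is exactly the tension I identified, and the resolution must be that the number of interval systems one truly needs is much smaller than $n^{O(\log n)}$ — most plausibly because one only needs systems of \emph{equal-size} intervals (reduction already noted in Lemma~\ref{lem-matchRandomPermutation}'s proof), of which, for a fixed common size $x\mid n$ and $y$, there are at most $\binom{n/x}{d}\cdot\binom{n/y}{d}$... which over the $O(\log^2 n)$ choices of dyadic sizes $(x,y)$ is still $n^{O(\log n)}$. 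I therefore anticipate the main difficulty of the actual proof to be organizing this union bound so that the $2^{d^2}=n^{9\log n}$ factor from the choice of $T$ is the dominant combinatorial term and is comfortably killed by the $e^{-Sz^2/n}$ with $Sz^2/n=\Theta(\log^2 n)$ having a large enough constant — which is exactly why $S$ is taken as large as $2\cdot10^4 n$ — and the plan is: (1) reduce to equal-size, grid-aligned intervals; (2) fix such a system and the set of $\ge r$ empty pairs; (3) apply Lemma~\ref{lem-matchRandomPermutation}; (4) union-bound, checking the constants make the total $o(1)$.
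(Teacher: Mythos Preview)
Your overall plan is exactly the paper's: union-bound over interval systems and choices of ``empty'' pairs, then apply Lemma~\ref{lem-matchRandomPermutation}. The arithmetic for $z=\lfloor(3d-\sqrt{9d^2-8r})/4\rfloor$ and the resulting tail $e^{-Sz^2/n}\approx e^{-15\log^2 n}\approx 2^{-22\log^2 n}$ is fine. The gap is a single overcounting step that sends you on an unnecessary detour.

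The error is the bound $\binom{d^2}{r}\le 2^{d^2}$. With $r=\log^2 n/4=d^2/36$, the set of empty pairs is \emph{tiny} relative to $d^2$, and $\binom{d^2}{r}\le (ed^2/r)^r=(36e)^r<2^{7r}=2^{\frac{7}{4}\log^2 n}$, not $2^{9\log^2 n}$. This is the whole point of choosing $r$ so small: the combinatorial cost of naming the $r$ empty pairs is negligible. With this correction your own interval count $n^{O(d)}=2^{O(\log^2 n)}$ (say $2^{15\log^2 n}$, or the paper's slightly tighter $\binom{n+2d}{2d}^2<2^{12\log^2 n}$) plus $2^{\frac{7}{4}\log^2 n}$ is beaten by the tail $2^{-22\log^2 n}$ with several $\log^2 n$ to spare, and the expected number of bad configurations is $o(1)$.

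So no grid reduction, no equal-size reduction beyond what is already inside Lemma~\ref{lem-matchRandomPermutation}, and no further cleverness is needed; steps (1)--(4) of your final plan are exactly right once you replace $2^{d^2}$ by $\binom{d^2}{r}$. (Minor aside: the $J$'s live in $[2n]\setminus[n]$, a set of size $n$, so their count is also $\le n^{2d}$; your $n^{3d}$ was an unnecessary concession, though not a fatal one.)
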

\begin{proof}
Let $\cM(\pi)$ be the random ordered $n$-matching.
Let $X$ be a random variable expressing the number of collections $I_1,\ldots,I_d$ and $J_1,\ldots,J_d$ of intervals from the statement of the lemma with $r$ pairs $(I_i,J_j)$, $1\leq i,j \leq d$, that satisfy $e_{\cM(\pi)}(I_i,J_j)=0$.
We show that the expected value of $X$ tends to zero as $n$ goes to infinity.
The rest then easily follows from Markov's inequality.

The number of collections $I_1,\ldots,I_d\subseteq [n]$ and $J_1,\ldots,J_d \subseteq [2n] \setminus [n]$ of pairwise disjoint intervals is at most
\[\binom{n+2d}{2d}^2 \leq \left(\frac{e(n+2d)}{2d}\right)^{4d} < 2^{4d\log{n}}.\]

The number of choices of $r$ elements from a set of $d^2$ elements is
\[\binom{d^2}{r} \leq\left(\frac{ed^2}{r}\right)^r < 2^{7r},\]
where the last inequality follows from the expression of $d$ and $r$.

We fix collections $I_1,\ldots,I_d$ and $J_1,\ldots,J_d$ and $r$ pairs $(I_i,J_j)$ with $1\leq i,j \leq d$.
By Lemma~\ref{lem-matchRandomPermutation}, the probability that $\cM(\pi)$ has no edge between $I_i$ and $J_j$, where $(I_i,J_j)$ is among $r$ chosen pairs, is less than $e^{-\frac{S}{n}\left\lfloor(3d-\sqrt{9d^2-8r})/4\right\rfloor^2}$.

Altogether, the expected value of $X$ is bounded from above by 
\[2^{4d\log{n}}\cdot 2^{7r} \cdot e^{-\frac{S}{n}\left\lfloor(3d-\sqrt{9d^2-8r})/4\right\rfloor^2} < 2^{12\log^2{n}+\frac{7}{4}\log^2{n}- \frac{60}{4}\log^2{n}} =2^{-\frac{5}{4}\log^2{n}}.\]
Thus the expected value of $X$ tends to zero as $n$ goes to infinity, which concludes the proof.
\end{proof}

In the rest of the section, we set $M \colonequals \frac{n\log{\log{n}}}{8\log{n}}$, $s \colonequals \frac{n}{8\log{n}}$, and $t \colonequals \frac{n}{20\log{n}}$.

\begin{lemma}
\label{lem-matchSmallIntervals}
The random ordered $n$-matching $\cM(\pi)$ satisfies the following statement asympto\-tically almost surely: for every $k$ with $1 \leq k \leq t$ and for all partitions $I_1,\ldots,I_k$ of $[n]$ and $J_k,\ldots,J_t$ of $[2n]\setminus[n]$ into an order-obeying sequence of intervals of sizes at most $s$ such that $|I_{i_1}|\geq \cdots \geq |I_{i_k}|$, $|J_{j_1}| \geq \cdots \geq |J_{j_{t-k+1}}|$, and $|I_{i_{d+1}}||J_{j_{d+1}}| < S$, there are more than $M$ pairs $(I_{i_l},J_{j_{l'}})$ with $l,l'>d$ and $e_{\cM(\pi)}(I_{i_l},J_{j_{l'}})>0$.
\end{lemma}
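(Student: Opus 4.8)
The plan is to prove Lemma~\ref{lem-matchSmallIntervals} by the first moment method, as in the proof of Lemma~\ref{lem-matchLargeIntervals} but with a different probabilistic core. Call a \emph{type} a triple $\tau=(k,(I_1,\dots,I_k),(J_k,\dots,J_t))$ satisfying the structural requirements of the statement (order-obeying interval partitions of $[n]$ and $[2n]\setminus[n]$ into pieces of size at most $s$, the sorted enumerations $I_{i_1},\dots,I_{i_k}$, $J_{j_1},\dots,J_{j_{t-k+1}}$, and $|I_{i_{d+1}}|\cdot|J_{j_{d+1}}|<S$), and say $\tau$ is \emph{bad} for $\cM(\pi)$ if at most $M$ of the pairs $(I_{i_l},J_{j_{l'}})$ with $l,l'>d$ carry an edge. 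Let $X=X(\pi)$ count the bad types; it suffices to show $\mathbb{E}[X]=o(1)$. Fix a type $\tau$ and let $A:=\bigcup_{l>d}I_{i_l}$ and $A':=\bigcup_{l'>d}J_{j_{l'}}$ be the unions of its small intervals. Because the $d$ largest intervals on each side have size at most $s$, we have $|A|\ge n-ds\ge5n/8$ and $|A'|\ge5n/8$, hence the number $|A\cap\pi^{-1}(A'-n)|$ of edges of $\cM(\pi)$ joining $A$ to $A'$ is at least $|A|+|A'|-n\ge n/4$ for \emph{every} $\pi$. Each such edge lies inside exactly one small-small pair, so if $\tau$ is bad there is a family $\mathcal B$ of $M$ small-small pairs whose ``boxes'' $I_{i_l}\times J_{j_{l'}}$ together contain at least $n/4$ edges of $\cM(\pi)$. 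Therefore
\[\mathbb{E}[X]\le(\#\text{types})\cdot\binom{t^2}{M}\cdot\max_{\mathcal B}\Pr\bigl[\cM(\pi)\text{ has }\ge n/4\text{ edges in the union of the }M\text{ boxes of }\mathcal B\bigr],\]
and routine counting gives $\#\text{types}\le t\binom{n+t}{t}^2=2^{O(n\log\log n/\log n)}$ and $\binom{t^2}{M}\le(et^2/M)^M\le n^M=2^{(n\log\log n)/8}$.

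For the remaining probability, fix $\mathcal B$. For $i\in[n]$ let $G_i\subseteq[n]$ be the union of the sets $J-n$ over all right intervals $J$ appearing in a box of $\mathcal B$ whose left interval contains $i$; then an edge $\{i,n+\pi(i)\}$ lies in a box of $\mathcal B$ exactly when $\pi(i)\in G_i$, so we must bound $\Pr[\,|\{i:\pi(i)\in G_i\}|\ge u\,]$ where $u:=\lceil n/4\rceil$. Since a box $I_{i_l}\times J_{j_{l'}}$ with $l,l'>d$ has area at most $|I_{i_{d+1}}|\cdot|J_{j_{d+1}}|<S$ — the only use of the lemma's hypothesis — we get $\sum_{i\in[n]}|G_i|\le\sum_{(I,J)\in\mathcal B}|I|\cdot|J|<MS$. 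Union-bounding over the set $U$ of ``hit'' indices and exposing $\pi(i)$ for $i\in U$ one value at a time, exactly as in the proof of Lemma~\ref{lem-matchRandomPermutation},
\[\Pr\bigl[|\{i:\pi(i)\in G_i\}|\ge u\bigr]\le\binom nu\max_{|U|=u}\frac{\prod_{i\in U}|G_i|}{n(n-1)\cdots(n-u+1)}=\frac1{u!}\max_{|U|=u}\prod_{i\in U}|G_i|,\]
and by the AM--GM inequality together with $u!\ge(u/e)^u$ this is at most $\bigl(u^{-1}\sum_{i\in[n]}|G_i|\bigr)^u/u!<(eMS/u^2)^u$.

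Substituting the values $d=3\log n$, $S=2\cdot10^4n$, $s=n/(8\log n)$, $t=n/(20\log n)$, $M=n\log\log n/(8\log n)$, one finds $eMS/u^2=\Theta(\log\log n/\log n)\to0$, so the probability above is at most $2^{-(1/4-o(1))n\log\log n}$, whence
\[\mathbb{E}[X]\le2^{O(n\log\log n/\log n)}\cdot2^{(n\log\log n)/8}\cdot2^{-(1/4-o(1))n\log\log n}=2^{-(1/8-o(1))n\log\log n}\longrightarrow0,\]
which finishes the proof.

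The main obstacle is that one cannot simply invoke Lemma~\ref{lem-matchRandomPermutation}: its size hypothesis $|X_d|\cdot|Y_d|\ge S$ fails for the small intervals, and merging them into larger groups does not help, since the groups cannot be made simultaneously large enough for that lemma and few enough to be absorbed by the union bound. The way around this is to anchor everything on the \emph{deterministic} lower bound of $n/4$ on the number of cross edges and view ``these edges all sit in $M$ boxes of total area $<MS$'' as a large-deviation event; the delicate quantitative point is then that the resulting exponent $\tfrac14n\log\log n$ must genuinely beat the $\tfrac18n\log\log n$ coming from the choice of $\mathcal B$, which is exactly what forces the specific constants in $d,S,s,t,M$.
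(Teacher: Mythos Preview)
Your proof is correct and follows essentially the same route as the paper's: first-moment over all partitions and all size-$M$ families of small--small pairs, the deterministic observation that at least $n/4$ edges of $\cM(\pi)$ must fall among small--small pairs (because the $d$ largest intervals on each side cover at most $ds=3n/8$ vertices), and then a probability bound showing that concentrating $\ge n/4$ edges into $M$ boxes of total area $<MS$ is extremely unlikely. The only technical difference is in this last step: the paper conditions on the partial matching incident to the $2d$ large intervals and then upper-bounds the number of completions landing entirely inside the boxes by $\binom{MS}{\alpha n}/(\alpha n)!$, whereas you take a union bound over the set $U$ of ``hit'' left vertices and apply AM--GM to $\prod_{i\in U}|G_i|$. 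Both yield a bound of the form $(\Theta(\log\log n/\log n))^{\Theta(n)}$, and your version is arguably a touch cleaner and gives a slightly better exponent ($1/4$ versus the paper's $1/7$), but the underlying idea is identical.
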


We note that some of the intervals from $I_1,\ldots,I_k,J_k,\ldots,J_t$ might be empty.
Note that it follows from the choice of $s$ that the number of intervals $I_1,\dots,I_k$ is at least $8\log{n}$ and, similarly, the number of intervals $J_k,\dots,J_t$ is at least $8\log{n}$.

\begin{proof}
We assume that $n$ is sufficiently large.
The number of partitions $I_1,\ldots,I_k,\allowbreak J_k,\ldots,J_t$ from the statement is at most
\[\binom{2n+t}{t} \leq \left(\frac{e(2n+t)}{t}\right)^t < (120\log{n})^{n/(20\log{n})} < 2^{n\log{\log{n}}/(19\log{n})}.\]

For such fixed partition $I_1,\ldots,I_k,J_k,\ldots,J_t$ of $[2n]$, the number of choices of $M$ pairs $(I_{i_l},J_{j_{l'}})$ with $l,l'>d$ is at most
\[\binom{t^2}{M} \leq \left(\frac{et^2}{M}\right)^M < 2^{n\log{\log{n}}/8}.\]

Since every interval of the partition has size at most $s$, there are at most $2ds$ edges with one endpoint in $I_{i_1}\cup\cdots\cup I_{i_d}\cup J_{j_1}\cup \cdots\cup J_{j_d}$.
Thus the number of edges of $\cM(\pi)$ between intervals $I_{i_l}$ and $J_{j_{l'}}$ with $l,l' > d$ is at least $n/4$, as we have $n-2ds = n - 3n/4 = n/4$.

Let $P$ be the probability that for a fixed partition $I_1,\ldots,I_k,J_k,\ldots,J_t$ and for a fixed set $T$ of $M$ pairs $(I_{i_l},J_{j_{l'}})$ with $l,l'>d$ the random $n$-matching $\cM(\pi)$ satisfies $e_{\cM(\pi)}(I_{i_l},J_{j_{l'}})=0$ for every pair $(I_{i_l},J_{j_{l'}})$ with $l,l'>d$ that is not contained in $T$.
We show that the probability $P$ is less than
\[\max_{\alpha \in [1/4,1]}{\frac{\binom{MS}{\alpha n}}{(\alpha n)!}}.\]

We fix an ordered matching $\cM$ that is formed by edges incident to a vertex in $I_{i_1}\cup\cdots\cup I_{i_d}\cup J_{j_1}\cup \cdots\cup J_{j_d}$.
Let $n_{\cM}$ be the number of edges in $\cM$ and let $P_{\cM}$ be the probability that $\cM$ is in $\cM(\pi)$.
Then there are $n-n_{\cM} \geq n/4$ edges of $\cM(\pi)$ that are not in $\cM$.
These edges are contained in pairs from $T$ with probability less than $\binom{MS}{n-n_{\cM}}/((n-n_{\cM})!)$, as $|I_i||J_j| < S$ for every pair $(I_i,J_j) \in T$.
Taking the maximum of $\binom{MS}{\alpha n}/(\alpha n)!$ over $\alpha \in [1/4,1]$, we have
\[P < \sum_{\cM} P_{\cM}\frac{\binom{MS}{n - n_{\cM}}}{(n - n_{\cM})!} \leq \max_{\alpha \in [1/4,1]}{\frac{\binom{MS}{\alpha n}}{(\alpha n)!}}\sum_{\cM}P_{\cM} = \max_{\alpha \in [1/4,1]}{\frac{\binom{MS}{\alpha n}}{(\alpha n)!}},\]
where the summation goes over all ordered matchings $\cM$ that are formed by edges incident to a vertex in $I_{i_1}\cup\cdots\cup I_{i_d}\cup J_{j_1}\cup \cdots\cup J_{j_d}$.

Using the standard estimates $(a/e)^a\leq a!$ and $\binom{a}{b}\leq\left(\frac{ea}{b}\right)^b$ for positive integers $a$ and $b$, we bound $\binom{MS}{\alpha n}/(\alpha n)!$ from above by
\begin{align*}
\left(\frac{eMS/(\alpha n)}{\alpha n/e}\right)^{\alpha n} &= \left(\frac{e^2MS}{(\alpha n)^2}\right)^{\alpha n}=\left(\frac{2\cdot 10^4e^2 \log{\log{n}}}{8\alpha^2\log{n}}\right)^{\alpha n} \\
&< (\log{n})^{-4\alpha n/7}\leq 2^{-n\log{\log{n}}/7}
\end{align*}
for a sufficiently large $n$.

Let $X$ be the random variable expressing the number of partitions $I_1,\ldots,I_k,\allowbreak J_k,\ldots,J_t$ of $[2n]$ from the statement of the lemma such that the number of pairs $(I_{i_l},J_{j_{l'}})$ satisfying $l,l'>d$ and $e_{\cM(\pi)}(I_{i_l},J_{j_{l'}})>0$ is at most $M$.
It follows from our observations that, for a sufficiently large $n$, the expected value of $X$ is at most
\[2^{n\log{\log{n}}/(19\log{n})} \cdot 2^{n\log{\log{n}}/8} \cdot 2^{-n\log{\log{n}}/7} .\]
Thus we see that the expected value of $X$ tends to zero as $n$ goes to infinity.
The rest of the statement then follows from Markov's inequality.
\end{proof}

We now prove Theorem~\ref{thm-improvedLowerBoundMatch}.
The main idea of the proof is similar to the one used by Conlon et al.~\cite{cfls14} in the proof of Theorem~\ref{thm-lowerBoundMatch}.

\begin{proof}[Proof of Theorem~\ref{thm-improvedLowerBoundMatch}]
Let $\cM$ be an ordered matching of interval chromatic number two and with the color classes of size $n$ that satisfies the statements from Lemma~\ref{lem-matchLargeIntervals} and Lemma~\ref{lem-matchSmallIntervals}.
We know that the random $n$-matching satisfies both statements asymptotically almost surely.

Let $\cR$ be the ordered complete graph with loops on the vertex set $[t]$.
Let $\chi$ be the coloring of $\cR$ that assigns either a red or a blue color to each edge of $\cR$ independently at random with probability $1/2$.
Let $A_1,\ldots,A_t$ be the partition of the vertex set of $\cK_{st}$ into an order-obeying sequence of $t$ intervals of size $s$.
We define a coloring $\chi'$ of $\cK_{st}$ as follows.
The color $\chi'(e)$ of an edge $e$ of $\cK_{st}$  is $\chi(\{i,j\})$ if one endvertex of $e$ is in $A_i$ and the other one is in $A_j$.

We show that the probability that there is a red copy of $\cM$ in $\chi'$ is less than $1/2$.
Let $\cM_0$ be a red copy of $\cM$ in $\chi'$ and let $I$ and $J$ be the color classes of $\cM_0$ in this order.

For $i=1,\ldots,t$, we set $I_i \colonequals A_i \cap I$.
Let $k\leq t$ be the maximum $i$ such that $I_i$ is nonempty.
For $j=k,\ldots,t$, we set $J_j \colonequals A_j \cap J$.
This gives us partitions $I_1,\ldots,I_k$ and $J_k,\ldots,J_t$ of $I$ and $J$, respectively, into order-obeying sequences of intervals, each of size at most $s$.
We consider the orderings $I_{i_1},\ldots,I_{i_k}$ and $J_{j_1},\ldots,J_{j_{t-k+1}}$ of $I_1,\ldots,I_k$ and $J_k,\ldots,J_t$, respectively, such that $|I_{i_1}|\geq \cdots \geq |I_{i_k}|$ and $|J_{j_1}| \geq \cdots \geq |J_{j_{t-k+1}}|$.

First, we assume that $|I_{i_d}||J_{j_d}| \geq S$.
Since $\cM$ satisfies the statement from Lemma~\ref{lem-matchLargeIntervals}, there are at least $d^2-r$ pairs $(I_i,J_j)$ with $i \in \{i_1,\ldots,i_d\}$, $j \in \{j_1,\ldots,j_d\}$, and $e_{\cM_0}(I_i,J_j)>0$.
From the choice of $\chi'$, the red copy $\cM_0$ thus corresponds to a red ordered subgraph of $\cR$ in $\chi$ with $2d$ or $2d-1$ vertices and with at least $d^2-r$ edges, one of which might be a loop.
By the union bound, the probability that there is such an ordered graph in $\cR$ is at most 
\begin{align*}
\left(\binom{t}{2d}+\binom{t}{2d-1}\right)\binom{d^2}{r}&2^{-d^2+r} \leq 2\left(\frac{et}{2d}\right)^{2d}\left(\frac{ed^2}{r}\right)^{r}2^{-d^2+r}\\
&< 2^{6\log^2{n}}\cdot 2^{2\log^2{n}} \cdot 2^{-9\log^2{n}+\log^2{n}/4}=2^{-\frac{3}{4}\log^2{n}},
\end{align*}
where the last inequality follows from the choice $d=3\log{n}$, $t=\frac{n}{20\log{n}}$, and $r=\log^2{n}/4$.
For a sufficiently large $n$, this expression is less than $1/4$.

Now, we assume that $|I_{i_d}||J_{j_d}| < S$.
The matching $\cM$ satisfies the statement from Lemma~\ref{lem-matchSmallIntervals} and thus there are more than $M$ pairs $(I_{i_l},J_{j_{l'}})$ with $l,l'>d$ and $e_{\cM(\pi)}(I_{i_l},J_{j_{l'}})>0$.
From the choice of $\chi'$, the red copy $\cM_0$ corresponds to a red ordered subgraph of $\cR$ in $\chi$ with at least $M$ edges that are determined by the partition $I_1,\ldots,I_k,J_k,\ldots,J_t$ of $[2n]$.
By the union bound, the probability that there is such ordered graph in $\chi$ is at most
\[\binom{2n+t}{t}2^{-M} \leq \left(\frac{e(2n+t)}{t}\right)^t 2^{-M} < 2^{\frac{n\log{\log{n}}}{19\log{n}}}\cdot 2^{-\frac{n\log{\log{n}}}{8\log{n}}} = 2^{-\frac{11n\log{\log{n}}}{152\log{n}}}.\]
If $n$ is sufficiently large, then this term is less than $1/4$.

In total, the probability that there is a red copy of $\cM$ in $\chi'$ is less than $1/4+1/4=1/2$.
From symmetry, a blue copy of $\cM$ appears in $\chi'$ with probability less than $1/2$.
Altogether, the probability that there is no monochromatic copy of $\cM$ in $\chi'$ is positive.
In other words, we have $\OR(\cM) \geq  st = \frac{1}{160}\left(\frac{n}{\log{n}}\right)^2$.
\end{proof}

\section*{Acknowledgments}
The authors were supported by the grant GA\v{C}R 14-14179S.
The first author acknowledges the support of the Grant Agency of the Charles University, GAUK 690214 and the project SVV-2015-260223. The second author received financial support from the Neuron Foundation for Support of Science.

We would like to thank the anonymous referees for their comments that helped to improve the presentation of the paper.

\end{document}